\DeclareMathAlphabet{\mathpzc}{OT1}{pzc}{m}{it}
\newtheorem{theorem}{Theorem}[subsection]
\newtheorem{theorem-definition}[theorem]{Theorem-Definition}
\newtheorem{lemma-definition}[theorem]{Lemma-Definition}
\newtheorem{definition-prop}[theorem]{Proposition-Definition}
\newtheorem{corollary}[theorem]{Corollary}
\newtheorem{prop}[theorem]{Proposition}
\newtheorem{lemma}[theorem]{Lemma}
\newtheorem{cor}[theorem]{Corollary}
\newtheorem{definition}[theorem]{Definition}
\newtheorem{question}[theorem]{Question}
\newtheorem{stelling?}{Stelling(?)}[section]
\theoremstyle{definition}
\newtheorem{remark}[theorem]{Remark}
\newcommand{\N}{\ensuremath{\mathbb{N}}}
\newcommand{\Z}{\ensuremath{\mathbb{Z}}}
\newcommand{\Q}{\ensuremath{\mathbb{Q}}}
\newcommand{\A}{\ensuremath{\mathbb{A}}}
\newcommand{\X}{\ensuremath{\mathscr{X}}}
\renewcommand{\A}{\ensuremath{\mathbb{A}}}
\renewcommand{\X}{\ensuremath{\mathfrak{X}}}
\newcommand{\Spec}{\ensuremath{\mathrm{Spec}\,}}
\newcommand{\Aut}{\ensuremath{\mathrm{Aut}}}
\newcommand{\vart}{\ensuremath{t}}
\numberwithin{equation}{section} \hyphenpenalty=6000
\begin{document}
\title{Geometric criteria for tame ramification}
\author[Johannes Nicaise]{Johannes Nicaise}
\address{KULeuven\\
Department of Mathematics\\
Celestijnenlaan 200B\\3001 Heverlee \\
Belgium} \email{johannes.nicaise@wis.kuleuven.be}

\thanks{The
research for this paper was partially supported by
ANR-06-BLAN-0183 and ANR-07-JCJC-0004. }
\begin{abstract}
We prove an A'Campo type formula for the tame monodromy zeta
function of a smooth and proper variety over a discretely valued
field $K$. As a first application, we relate the orders of the
tame monodromy eigenvalues on the $\ell$-adic cohomology of a
$K$-curve to the geometry of a relatively minimal $sncd$-model,
and we show that the semi-stable reduction theorem and Saito's
criterion for cohomological tameness are immediate consequences of
this result. As a second application, we compute the error term in
the trace formula for smooth and proper $K$-varieties. We see that
the validity of the trace formula would imply a partial
generalization of Saito's criterion to arbitrary dimension.
\\ MSC2010: 11G20, 11G25,
14G05
\end{abstract}

 \maketitle
\section{Introduction}
Let $R$ be a henselian discrete valuation ring with quotient field
$K$ and algebraically closed residue field $k$, and let $\ell$ be
a prime number different from the characteristic of $k$. We denote
by $K^t$ a tame closure of $K$. Let $X$ be a smooth and proper
$K$-variety. In Section \ref{sec-ac}, we compute the zeta function
$\zeta_X(\vart)$ of the tame monodromy action on the graded tame
$\ell$-adic cohomology
$$H(X\times_K K^t,\Q_\ell)=\bigoplus_{m\geq 0} H^m(X\times_K K^t,\Q_\ell)$$
 in terms of an $sncd$-model $\mathscr{X}$ of $X$ over $R$
(Theorem \ref{thm-ac}). This zeta function completely determines
the class of $H(X\times_K K^t,\Q_\ell)$ in the Grothendieck ring
 of $\ell$-adic representations of the tame
inertia group $G(K^t/K)$. Our formula for the monodromy zeta
function is an arithmetic analog of a formula obtained by A'Campo
 \cite{A'C} for
the zeta function of the monodromy action on the cohomology of the
Milnor fiber at a complex hypersurface singularity. The main
additional complication in the arithmetic setting is that we need
to prove a tameness property of the complex of $\ell$-adic tame
nearby cycles associated to $\mathscr{X}$ (Proposition
\ref{prop-tamenearby}). This property allows us to compute
$\zeta_X(\vart)$ pointwise on the special fiber $\mathscr{X}_s$ of
$\mathscr{X}$, using Grothendieck's description of the stalks of
the tame nearby cycles on a divisor with normal crossings
\cite[I.3.3]{sga7a}.

We present two applications of our arithmetic A'Campo formula. In
Section \ref{sec-saito}, we consider the case where $X$ is a
$K$-curve $C$. In Theorem \ref{thm-primed} and Corollary
\ref{cor-primed}, we relate the orders of the tame monodromy
eigenvalues on
$$H^1(C\times_K K^t,\Q_\ell)$$ to the geometry of the special
fiber $\mathscr{C}_s$ of a relatively minimal $sncd$-model
$\mathscr{C}$ of $C$. We show how Saito's criterion for
cohomological tameness (Theorem \ref{thm-saito}) and the
semi-stable reduction theorem (Theorem \ref{thm-sstable} and
Corollary \ref{cor-sstable}) are immediate consequences of this
result. Our methods also allow to determine the degree of the
minimal extension of $K$ where $C$ acquires semi-stable reduction,
if $C$ is cohomologically tame (Corollary \ref{cor-mindegree}).
Our approach is similar in spirit to the one in Saito's paper
\cite{saito}, but our proof substantially simplifies the
combinatorial analysis of $\mathscr{C}_s$. For different proofs of
Saito's criterion, see \cite{saito-log,stix} (using logarithmic
geometry) and \cite{halle} (using a geometric analysis of the
behaviour of $sncd$-models under base change). For a survey on the
semi-stable reduction theorem for curves, see \cite{abbes}.


As a second application, in Section \ref{sec-trace}, we compute
the error term in the trace formula for $X$ on an $sncd$-model
$\mathscr{X}$ of $X$ over $R$. The trace formula was introduced in
\cite{NiSe} and further studied in
\cite{Ni-trace,Ni-abelian,Ni-tracevar}. It expresses a certain
measure for the set of rational points on $X$ in terms of the
Galois action on the tame $\ell$-adic cohomology of $X$. We expect
that the trace formula is valid if $X$ is geometrically connected
and cohomologically tame, and $X(K^t)$ non-empty. We've proven
this if $k$ has characteristic zero \cite[6.5]{Ni-tracevar}, if
$X$ is a curve \cite[\S7]{Ni-tracevar}, and if $X$ is an abelian
variety \cite[2.9]{Ni-abelian}. Our formula for the error term
shows that (assuming the existence of an $sncd$-model), our
conjecture is equivalent to a partial generalization of Saito's
criterion to arbitrary dimension (Question \ref{ques-saito}).
\subsection*{Acknowledgements} I am grateful to L. Illusie and T.
Saito for suggesting a proof of Lemma \ref{lemm-tamenearby} and
Proposition \ref{prop-tamenearby}, and to L. Halvard Halle for
pointing out an error in a preliminary version of the paper.

\subsection*{Notations} Let $R$ be a henselian discrete valuation
ring, with quotient field $K$ and algebraically closed residue
field $k$. We fix a uniformizer $\pi$ in $R$. We denote by $p\geq
0$ the characteristic of $k$, and we fix a prime $\ell$ different
from $p$. We fix a separable closure $K^s$ of $K$, and we denote
by $K^t$ the tame closure of $K$ in $K^s$. We denote by $P$ the
wild inertia subgroup of $G(K^s/K)$, and we choose a topological
generator $\varphi$ of the tame inertia group $G(K^t/K)$. We
denote by $\N'$ the set of strictly positive integers that are not
divisible by $p$. We fix an algebraic closure $\Q_\ell^a$ of
$\Q_\ell$, and we denote by $\Q^a$ the algebraic closure of $\Q$
in $\Q_\ell^a$.

 If $X$ is a separated scheme of finite type over $K$, then we have
a canonical $G(K^t/K)$-equivariant isomorphism
$$H^m(X\times_K K^t,\Q_\ell)\cong H^m(X\times_K K^s,\Q_\ell)^P$$ for
every integer $m\geq 0$. We say that $X$ is cohomologically tame
if $P$ acts trivially on $H^m(X\times_K K^s,\Q_\ell)$ for all
$m\geq 0$.

If $Y$ is a separated scheme of finite type over a field $F$ and
$p'$ is a prime different from the characteristic of $F$, then we
denote by $\chi(Y)$ the $p'$-adic Euler characteristic (with
proper supports) of $Y$:
$$\chi(Y)=\sum_{m\geq 0}(-1)^m \mathrm{dim}\,H^m_c(Y\times_F F^s,\Q_{p'})$$
with $F^s$ a separable closure of $F$. It is well-known that the
Euler characteristic $\chi(Y)$ is independent of $p'$: if $F$ is a
finite field, then by the Grothendieck-Lefschetz trace formula,
$\chi(Y)$ equals minus the degree of the Hasse-Weil zeta function
of $Y$ \cite[1.5.4]{weil1}. The general case follows by a
spreading out argument and proper base change (if $F$ has
characteristic zero, it can also be deduced from the comparison
with singular cohomology). The Euler characteristic $\chi(Y)$ is
equal to the Euler characteristic without supports, i.e.,
$$\chi(Y)=\sum_{m\geq 0}(-1)^m \mathrm{dim}\,H^m(Y\times_F F^s,\Q_{p'}).$$
If $F$ has characteristic zero this result is due to Grothendieck;
the general case was proven by Laumon \cite{laumon}.

We denote by
$$(\cdot)_s:(\mathrm{Sch}/R)\rightarrow (\mathrm{Sch}/k):\mathscr{X}\mapsto \mathscr{X}_s=\mathscr{X}\times_R k$$
the special fiber functor from the category of $R$-schemes to the
category of $k$-schemes. We denote by $(\cdot)_{\mathrm{red}}$ the
endofunctor on the category of schemes that maps a scheme $S$ to
its maximal reduced closed subscheme $S_{\mathrm{red}}$. All
regular schemes are assumed to be locally Noetherian. When we
speak of a local ring $(A,\mathfrak{m}_A,k_A)$, we mean that $A$
is a local ring with maximal ideal $\mathfrak{m}_A$ and residue
field $k_A$.

For every integer $d>0$, we denote by $\Phi_d(\vart)\in \Z[\vart]$
the cyclotomic polynomial whose roots are the primitive $d$-th
roots of unity.
\section{The tame monodromy zeta function}\label{sec-ac}
\subsection{Models}\label{subsec-models}
We recall some standard definitions and fix some terminology. All
of the results in this section are well-known, but we include them
here for lack of suitable reference. All definitions, results and
proofs in Section \ref{subsec-models} are formulated in such a way
that they are valid over an arbitrary discrete valuation ring
$R$.

 Let $\mathscr{X}$ be a regular flat $R$-scheme, and let $x$ be a point of the special fiber
 $\mathscr{X}_s$. We say that $\mathscr{X}_s$ has strict normal
 crossings at $x$ if there exist a
regular system of parameters $(x_1,\ldots,x_m)$ and a unit $u$ in
$\mathcal{O}_{\mathscr{X},x}$ and elements $N_1,\ldots,N_m$ in
$\N$ such that \begin{equation}\label{eq-sncd}\pi=u \prod_{i=1}^m
(x_i)^{N_i}.\end{equation} Since every regular local ring is a
UFD, this is equivalent to saying that every tuple of
non-associated prime factors of $\pi$ in
$\mathcal{O}_{\mathscr{X},x}$ is part of a regular system of
parameters. We say that $\mathscr{X}$ is strictly semi-stable at
$x$ if $\mathscr{X}_s$ has strict normal crossings at $x$ and the
local ring $\mathcal{O}_{\mathscr{X}_s,x}$ is reduced. This is
equivalent to the property that each exponent $N_i$ in
\eqref{eq-sncd} is either zero or one.

 An  $sncd$-model over $R$ is a regular flat
separated $R$-scheme of finite type $\mathscr{X}$ such that
$\mathscr{X}_s$ has strict normal crossings at every point of
$\mathscr{X}_s$. An $sncd$-model $\mathscr{X}$ is called
semi-stable if $\mathscr{X}_s$ is reduced. This is equivalent to
the property that $\mathscr{X}$ is strictly semi-stable at every
point of $\mathscr{X}_s$.

\begin{lemma}\label{lemm-sncdequiv}
Let $\mathscr{X}$ be a regular flat $R$-scheme, and let $x$ be a
point of $\mathscr{X}_s$. We denote by $d$ the dimension of
$\mathscr{X}$ at $x$. Let $E_1,\ldots,E_n$ be the irreducible
components of $\mathscr{X}_s$ that pass through $x$, endowed with
their induced reduced structure. For every non-empty subset $J$ of
$\{1,\ldots,n\}$, we denote by $E_J$ the schematic intersection of
the closed subschemes $E_j$ of $\mathscr{X}$ with $j$ in $J$.
 Then the following
properties are equivalent.
\begin{enumerate}
\item The special fiber $\mathscr{X}_s$ has strict normal
crossings at $x$. \item For every non-empty subset $J$ of
$\{1,\ldots,n\}$, the scheme $E_J$ is regular
 and of dimension $d-|J|$ at $x$.
\end{enumerate}
\end{lemma}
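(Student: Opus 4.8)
The plan is to reduce the equivalence to a purely local statement about the regular local ring $A=\mathcal{O}_{\mathscr{X},x}$ together with the element $\pi\in A$. First note that $\pi$ is a nonzero nonunit of $A$: nonzero because $\mathscr{X}$ is flat over $R$, and contained in $\mathfrak{m}_A$ because $x\in\mathscr{X}_s$. Since $A$ is a unique factorization domain, write $\pi=u\,f_1^{a_1}\cdots f_r^{a_r}$ with $u\in A^{\times}$, the $f_i$ pairwise non-associate prime elements, and each $a_i\geq 1$. The primes of $A$ minimal over $\pi$ are then precisely $(f_1),\dots,(f_r)$, and these correspond to the generic points of the irreducible components of $\mathscr{X}_s$ through $x$; hence $r=n$ and, after renumbering, $E_i=\Spec A/(f_i)$ locally at $x$ — already reduced, as $(f_i)$ is prime — and $E_J=\Spec A/(f_j:j\in J)$ locally at $x$ for every nonempty $J\subseteq\{1,\dots,n\}$. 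Setting $m:=\dim A$, the standard dimension formula (valid since $\mathscr{X}$ is catenary) gives $d=m+\dim\overline{\{x\}}$ and $\dim_x E_J=\dim\bigl(A/(f_j:j\in J)\bigr)+\dim\overline{\{x\}}$, so property (2) translates into: $A/(f_j:j\in J)$ is a regular local ring of Krull dimension $m-|J|$ for every nonempty $J$. Finally, by the reformulation of (1) recorded just after \eqref{eq-sncd}, property (1) translates into: $(f_1,\dots,f_n)$ is part of a regular system of parameters of $A$.

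The key input will be the standard characterization: for a regular local ring $(A,\mathfrak{m}_A,k_A)$ of dimension $m$ and elements $g_1,\dots,g_c\in\mathfrak{m}_A$, the conditions (i) $(g_1,\dots,g_c)$ is part of a regular system of parameters; (ii) the images of $g_1,\dots,g_c$ in $\mathfrak{m}_A/\mathfrak{m}_A^2$ are $k_A$-linearly independent; and (iii) $A/(g_1,\dots,g_c)$ is regular of dimension $m-c$, are equivalent. Here (i)$\Leftrightarrow$(ii) is Nakayama's lemma, and (ii)$\Rightarrow$(iii) is the usual fact that quotienting by part of a regular system of parameters yields a regular local ring whose dimension drops by exactly the number of parameters removed. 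For (iii)$\Rightarrow$(ii), I would let $s$ be the dimension of the $k_A$-span of the images of the $g_i$ in $\mathfrak{m}_A/\mathfrak{m}_A^2$; then $A/(g_1,\dots,g_c)$ has embedding dimension $m-s$, while its Krull dimension is at least $m-c$ (each of the $c$ generators lowers the dimension by at most one). Regularity forces the embedding dimension to equal the Krull dimension, which is $m-c$ by hypothesis, so $m-s=m-c$, hence $s=c$, i.e. the images are linearly independent.

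Granting this, the lemma is immediate. If (1) holds, then $(f_1,\dots,f_n)$, and hence every sub-tuple $(f_j)_{j\in J}$, is part of a regular system of parameters of $A$, so $A/(f_j:j\in J)$ is regular of dimension $m-|J|$ by (i)$\Rightarrow$(iii); this is (2). Conversely, if (2) holds, apply it with $J=\{1,\dots,n\}$: then $A/(f_1,\dots,f_n)$ is regular of dimension $m-n$, so $(f_1,\dots,f_n)$ is part of a regular system of parameters of $A$ by (iii)$\Rightarrow$(i); this is (1).

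I do not anticipate a genuine obstacle here: once the translation above is in place, the whole content is the elementary commutative algebra of the second paragraph. The only steps that take a little care are the identification, via unique factorization of $\pi$, of the reduced components $E_i$ and the a priori possibly non-reduced schematic intersections $E_J$ with $\Spec A/(f_i)$ and $\Spec A/(f_j:j\in J)$, and the passage between "dimension of $E_J$ at $x$" and the Krull dimension of the corresponding quotient of $A$, which uses that $\mathscr{X}$ is catenary.
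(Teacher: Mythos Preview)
Your proof is correct and follows essentially the same route as the paper: localize at $x$, factor $\pi$ in the regular local ring $A=\mathcal{O}_{\mathscr{X},x}$ to identify the $E_i$ with $\Spec A/(f_i)$, and reduce the equivalence to the standard characterization of partial regular systems of parameters via regularity and dimension of the quotients. The paper compresses this last step into citations to EGA (notably \cite[17.1.7]{ega4.1} for the cotangent-space criterion and \cite[5.1.9]{ega4.2} for the dimension comparison), whereas you spell out the argument (i)$\Leftrightarrow$(ii)$\Leftrightarrow$(iii) directly; the content is the same. One small caution: your ``standard dimension formula'' $d=m+\dim\overline{\{x\}}$ needs a bit more than catenarity in general---it is here that the paper leans on \cite[5.1.9]{ega4.2}---but since $\mathscr{X}$ is regular (hence locally integral and universally catenary) the needed biequidimensionality near $x$ holds, and the same closure $\overline{\{x\}}$ serves for both $\mathscr{X}$ and $E_J$, so the cancellation goes through.
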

\begin{proof}
%
Let $(x_1,\ldots,x_n)$ be a maximal tuple of non-associated prime
factors of $\pi$ in $\mathcal{O}_{\mathscr{X},x}$. Then the
irreducible components of $\Spec \mathcal{O}_{\mathscr{X}_s,x}$
are precisely the integral closed subschemes $\Spec
(\mathcal{O}_{\mathscr{X},x}/(x_i))$ of $\Spec
\mathcal{O}_{\mathscr{X},x}$, with $i\in \{1,\ldots,n\}$.
 It
follows from \cite[7.2.8.1]{ega1}, \cite[16.3.7 and
17.1.7]{ega4.1} and \cite[5.1.9]{ega4.2} that $(x_1,\ldots,x_n)$
is part of a regular system of parameters in
$\mathcal{O}_{\mathscr{X},x}$ if and only if condition (2) is
satisfied.
\end{proof}
\begin{corollary}\label{cor-localprop}
Let $\mathscr{X}$ be a regular flat $R$-scheme locally of finite
type. The set of points of $\mathscr{X}_s$ where $\mathscr{X}_s$
has strict normal crossings is open in $\mathscr{X}_s$.
\end{corollary}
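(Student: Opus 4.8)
The plan is to reduce to Lemma \ref{lemm-sncdequiv} and then show that the condition ``$E_J$ is regular of dimension $d-|J|$ at $x$'' for all relevant $J$ is an open condition on $x$. First I would let $x$ be a point of $\mathscr{X}_s$ at which $\mathscr{X}_s$ has strict normal crossings, and I must produce an open neighbourhood $U$ of $x$ in $\mathscr{X}_s$ consisting of such points. Since $\mathscr{X}$ is locally of finite type over $R$, after replacing $\mathscr{X}$ by an open neighbourhood of $x$ I may assume that there are only finitely many irreducible components $E_1,\ldots,E_n$ of $\mathscr{X}_s$, and that all of them pass through $x$ (shrink to avoid those that do not). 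Each $E_i$ carries its reduced induced structure, and for $\emptyset\neq J\subseteq\{1,\ldots,n\}$ I set $E_J=\bigcap_{j\in J}E_j$ as a schematic intersection, a closed subscheme of $\mathscr{X}$ of finite type over $k$.

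Next I invoke Lemma \ref{lemm-sncdequiv}: the hypothesis at $x$ gives that each $E_J$ is regular of dimension $d-|J|$ at $x$, where $d$ is the dimension of $\mathscr{X}$ at $x$; and conversely, at any point $y$ where this holds for every $J$ (with $d$ the local dimension of $\mathscr{X}$ at $y$, and where one restricts to the $J$ with $y\in E_J$), the fiber $\mathscr{X}_s$ has strict normal crossings at $y$. So it suffices to show that the set of points $y\in\mathscr{X}_s$ satisfying this numerical-regularity condition for all $J$ is open. The regular locus of each $E_J$ is open in $E_J$ because $E_J$ is excellent (finite type over a field, hence excellent, so the regular locus is open) — or one may cite that for a scheme of finite type over a field the regular locus is open directly. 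The locus in $E_J$ where the local dimension equals $d-|J|$ is also open-and-closed on the regular locus, since dimension is locally constant on a regular scheme; combined with openness of $\{y\in\mathscr{X}_s:\dim_y\mathscr{X}=d\}$ (upper semicontinuity of fiber dimension, or rather local constancy of dimension on the regular scheme $\mathscr{X}$), I get that each condition cuts out an open subset of $E_J$, hence of $\mathscr{X}_s$ after intersecting with the open complement of the other components where needed.

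The one genuinely fiddly point — and the main obstacle — is bookkeeping the components: at a nearby point $y$ not every $E_i$ need pass through $y$, and new components of $\mathscr{X}_s$ do not appear upon shrinking, so I must arrange the neighbourhood so that the components of $\mathscr{X}_s$ through any $y\in U$ are exactly those $E_i$ with $y\in E_i$, and that Lemma \ref{lemm-sncdequiv} is then applied with the subset $J=\{i:y\in E_i\}$ and its sub-subsets. Concretely, take $U$ to be the intersection over all $\emptyset\neq J\subseteq\{1,\ldots,n\}$ of: the open set where $E_J$ is regular, together with the open set where $\dim\mathscr{X}=d$ and $\dim_y E_J=d-|J|$ (interpreted suitably, i.e.\ one only imposes the condition on the trace $U\cap E_J$); this $U$ is a finite intersection of opens, contains $x$, and by Lemma \ref{lemm-sncdequiv} every point of $U$ is a point of strict normal crossings. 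I would close by remarking that finiteness of the index set is exactly what the local finite-type hypothesis buys us, so the argument is genuinely local and the global statement follows by covering $\mathscr{X}_s$.
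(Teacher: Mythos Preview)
Your proposal is correct and follows essentially the same route as the paper's proof: reduce to Lemma \ref{lemm-sncdequiv}, shrink so that $E_1,\ldots,E_n$ are the only components of $\mathscr{X}_s$, use openness of the regular locus for each $E_J$ (finite type over $k$), and further shrink so that every $E_J$ is regular of pure dimension $d-|J|$. The paper handles your ``fiddly bookkeeping'' point implicitly by arranging the dimension and regularity conditions globally on the shrunken $\mathscr{X}$, so that Lemma \ref{lemm-sncdequiv} applies at every $y\in\mathscr{X}_s$ with the subset of indices $i$ for which $y\in E_i$; your more explicit discussion of this is fine but not strictly necessary.
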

\begin{proof}
 Let $x$ be a point of $\mathscr{X}_s$ such that
$\mathscr{X}_s$ has strict normal crossings at $x$, and denote by
$d$ the dimension of $\mathscr{X}$ at $x$. Let
 $E_1,\ldots,E_n$ be as in Lemma \ref{lemm-sncdequiv}. Replacing $\mathscr{X}$ by a suitable open
neighbourhood of $x$, we may assume that $E_1,\ldots,E_n$ are the
only irreducible components of $\mathscr{X}_s$.

Choose a non-empty subset $J$ of $\{1,\ldots,n\}$. Then $E_J$ is
regular and of dimension $d-|J|$ at $x$, by Lemma
\ref{lemm-sncdequiv}. The regular locus of $E_J$ is open in $E_J$
because $E_J$ is locally of finite type over the field $k$
\cite[6.12.5]{ega4.2}. Thus, shrinking $\mathscr{X}$, we may
assume that $E_J$ is regular for every non-empty subset $J$ of
$\{1,\ldots,n\}$. We may also assume that $E_J$ has pure dimension
$d-|J|$. Then $\mathscr{X}_s$ has strict normal crossings at every
point of $\mathscr{X}_s$, by Lemma \ref{lemm-sncdequiv}.
\end{proof}

\begin{lemma}\label{lemma-para0}
Let $\varphi:(A,\mathfrak{m}_A,k_A)\to (B,\mathfrak{m}_B,k_B)$ be
a local homomorphism of regular local rings.
 Then the following are equivalent:
\begin{enumerate}
\item the morphism  $\varphi$ is flat, and
$\mathfrak{m}_AB=\mathfrak{m}_B$,  \item there exists a regular
system of parameters $(a_1,\ldots,a_m)$ in $A$ such that
$(\varphi(a_1),\ldots,\varphi(a_m))$ is a regular system of
parameters in $B$, \item a tuple $(a_1,\ldots,a_m)$ of elements in
$A$ is a regular system of parameters if and only if
$(\varphi(a_1),\ldots,\varphi(a_m))$ is a regular system of
parameters in $B$, \item the morphism of $k_B$-vector spaces
$$\psi:(\mathfrak{m}_A/\mathfrak{m}^2_A)\otimes_{k_A}k_B\to
\mathfrak{m}_B/\mathfrak{m}^2_B$$ induced by $\varphi$ is an
isomorphism.
\end{enumerate}
\end{lemma}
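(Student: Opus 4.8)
The plan is to prove the cycle of implications $(1)\Rightarrow(4)\Rightarrow(3)\Rightarrow(2)\Rightarrow(1)$, with the last implication being essentially trivial once the right characterizations are in hand. First I would treat $(1)\Rightarrow(4)$: assuming $\varphi$ is flat with $\mathfrak{m}_AB=\mathfrak{m}_B$, one gets $\mathfrak{m}_A^2 B=\mathfrak{m}_B^2$ immediately, so that $B/\mathfrak{m}_B^2 = B\otimes_A (A/\mathfrak{m}_A^2)$. Flatness then lets one base-change the short exact sequence $0\to\mathfrak{m}_A/\mathfrak{m}_A^2\to A/\mathfrak{m}_A^2\to k_A\to 0$ along $A\to B$ and identify the middle term; a small diagram chase (using that $B\otimes_A k_A = B/\mathfrak{m}_AB = B/\mathfrak{m}_B = k_B$) yields that $\psi$ is an isomorphism. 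Alternatively one can argue on dimensions: surjectivity of $\psi$ is equivalent to $\mathfrak{m}_AB=\mathfrak{m}_B$ by Nakayama, and injectivity follows from comparing $\dim A$, $\dim B$ and the flatness-plus-fiber-dimension formula $\dim B = \dim A + \dim (B/\mathfrak{m}_AB) = \dim A + 0$, since both rings are regular so $\dim$ equals embedding dimension.

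Next, $(4)\Rightarrow(3)$ is the key geometric step. Given that $\psi$ is an isomorphism, the images in $\mathfrak{m}_B/\mathfrak{m}_B^2$ of elements $\varphi(a_1),\dots,\varphi(a_m)$ form a basis precisely when the images of $a_1,\dots,a_m$ in $\mathfrak{m}_A/\mathfrak{m}_A^2$ form a basis (here I use that $k_A\to k_B$ makes $\mathfrak{m}_A/\mathfrak{m}_A^2\to(\mathfrak{m}_A/\mathfrak{m}_A^2)\otimes_{k_A}k_B$ send bases to bases, which needs $k_A\to k_B$ to be... well, one only needs that a $k_A$-basis maps to a $k_B$-generating set and linear independence is preserved — this is automatic for the $\otimes k_B$ map). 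Then invoke the standard fact that in a regular local ring a tuple is a regular system of parameters if and only if its residues form a basis of the cotangent space. This gives $(3)$. Then $(3)\Rightarrow(2)$ is immediate since a regular system of parameters in $A$ exists, and $(2)\Rightarrow(1)$: if some r.s.p.\ $(a_i)$ maps to an r.s.p.\ $(\varphi(a_i))$, then $\mathfrak{m}_AB = (\varphi(a_1),\dots,\varphi(a_m))B = \mathfrak{m}_B$ and flatness follows from the local criterion of flatness — one checks $\mathrm{Tor}_1^A(k_A,B)=0$, e.g.\ by resolving $k_A$ via the Koszul complex on the r.s.p.\ $(a_i)$ and noting that $(\varphi(a_i))$ being an r.s.p.\ in the regular (hence Cohen--Macaulay) ring $B$ makes it a regular sequence there, so the Koszul complex stays exact after $\otimes_A B$.

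The main obstacle is bookkeeping around the residue field extension $k_A\to k_B$ and making sure each implication uses only what is available: in particular, in $(4)\Rightarrow(3)$ one must be careful that "residues form a basis" transfers correctly across the tensor-by-$k_B$ operation in both directions, and in $(2)\Rightarrow(1)$ the cleanest route really is the local criterion of flatness via the Koszul complex, so I would make sure to state that $B$ regular implies every r.s.p.\ is a regular sequence (Cohen--Macaulayness) before using it. I expect to cite standard references — \cite{ega4.1} or a commutative algebra text — for the regular-system-of-parameters $\Leftrightarrow$ cotangent-basis equivalence and for the local flatness criterion, rather than reproving them.
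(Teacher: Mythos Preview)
Your proposal is correct and uses the same three ingredients as the paper's proof: the characterization of a regular system of parameters via the cotangent space (cited in the paper as \cite[17.1.7]{ega4.1}), the dimension formula for flat local homomorphisms, and the local criterion for flatness. The only differences are organizational: you run the cycle $(1)\Rightarrow(4)\Rightarrow(3)\Rightarrow(2)\Rightarrow(1)$, whereas the paper shows $(1)\Leftrightarrow(2)$ directly and then $(2)\Rightarrow(4)\Rightarrow(3)$; and for $(2)\Rightarrow(1)$ you argue via the Koszul resolution and regularity of the image sequence, while the paper invokes \cite[6.9]{eisenbud} together with induction on $\dim A$ --- both standard routes to the same conclusion.
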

\begin{proof}
%
If (1) holds, then $A$ and $B$ have the same dimension by
\cite[13.B]{matsumura},
 so that (2) follows from (1). Conversely,
(2) implies immediately that $\mathfrak{m}_A B=\mathfrak{m}_B$,
and flatness of $\varphi$ follows from the local criterion for
flatness (in the formulation of \cite[6.9]{eisenbud}) by induction
on the dimension of $A$. Thus (1) and (2) are equivalent.  The
implication (3)$\Rightarrow$(2) is trivial. By
\cite[17.1.7]{ega4.1}, a tuple $(c_1,\ldots,c_m)$ of elements in
the maximal ideal of a regular local ring $(C,\mathfrak{m}_C,k_C)$
is a regular system of parameters in $C$
 if and only if
the residue classes of the elements $c_i$ in the $k_C$-vector
space $\mathfrak{m}_C/\mathfrak{m}^2_C$ form a basis.
This shows that (2)$\Rightarrow$ (4)$\Rightarrow$(3).
%
\if false To simplify notation, we will denote $\varphi(a_j)$ by
$b_j$, for every index $j$. We start with the proof of (1). Let
$\varphi$ be flat and unramified, and let $(a_1,\ldots,a_m)$ be a
regular system of parameters in $A$. Then $A$ and $B$ have the
same dimension \cite[I.3.17]{milne} and $(b_1,\ldots,b_m)$
generate the maximal ideal in $B$, so that $(b_1,\ldots,b_m)$ is a
regular system of parameters in $B$. Assume, conversely, that
$(a_1,\ldots,a_m)$ is a regular system of parameters in $A$ such
that $(b_1,\ldots,b_m)$ is a regular system of parameters in $B$.
This immediately implies that $\varphi$ is unramified, and it
remains to show that $\varphi$ is flat.

Now we prove (2). Let $a_1,\ldots,a_n$ be elements in $A$. If
$a_1,\ldots,a_n$ are part of a regular system of parameters in
$A$, then it follows immediately from (1) that $b_1,\ldots,b_n$
are part of a regular system of local parameters in $B$. Assume,
conversely, that $b_1,\ldots,b_n$ are part of a regular system of
 parameters in $B$. \fi
\end{proof}
\begin{lemma}\label{lemm-para}
Let $f:Y\to X$ be a morphism of schemes that is locally of finite
presentation. Let $y$ be a point of $Y$ and set $x=f(y)$. Assume
that $X$ is regular at $x$ and that $Y$ is regular at $y$. Then
$f$ is \'etale at $y$ if and only if the residue field at $y$ is a
finite separable extension of the residue field at $x$ and the
morphism $\mathcal{O}_{X,x}\to \mathcal{O}_{Y,y}$ satisfies the
equivalent properties of Lemma \ref{lemma-para0}.
\end{lemma}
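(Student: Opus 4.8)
The plan is to reduce the statement to the characterization of étale morphisms in terms of the diagonal, or more directly, to the local structure theorem for étale morphisms combined with the flatness criterion already isolated in Lemma \ref{lemma-para0}. First I would recall that a morphism $f$ that is locally of finite presentation is étale at $y$ if and only if it is flat at $y$ and unramified at $y$. So the proof splits into matching "flat at $y$" and "unramified at $y$" against the conditions on the right-hand side. The unramified condition at $y$ means that $\Omega_{Y/X}$ vanishes in a neighbourhood of $y$, equivalently $\mathfrak{m}_x\mathcal{O}_{Y,y}=\mathfrak{m}_y$ and the residue field extension $k(x)\to k(y)$ is finite separable; this is standard (e.g.\ \cite[17.3.1, 17.4.1]{ega4.1}). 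The separability-of-residue-field clause thus transfers verbatim, and the condition $\mathfrak{m}_x\mathcal{O}_{Y,y}=\mathfrak{m}_y$ is precisely condition (1) of Lemma \ref{lemma-para0} once we also know flatness.

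The key point is then that, \emph{under the regularity hypotheses on $\mathcal{O}_{X,x}$ and $\mathcal{O}_{Y,y}$}, flatness of $\mathcal{O}_{X,x}\to\mathcal{O}_{Y,y}$ is not an independent condition but is forced by the equality $\mathfrak{m}_x\mathcal{O}_{Y,y}=\mathfrak{m}_y$; this is exactly the content of the equivalence (1)$\Leftrightarrow$(2) in Lemma \ref{lemma-para0} (flatness follows from the local criterion for flatness by induction on $\dim\mathcal{O}_{X,x}$, as stated in its proof). Therefore the right-hand side of Lemma \ref{lemm-para}---"$k(y)/k(x)$ finite separable and $\mathcal{O}_{X,x}\to\mathcal{O}_{Y,y}$ satisfies the equivalent properties of Lemma \ref{lemma-para0}"---is equivalent to "$k(y)/k(x)$ finite separable, $\mathcal{O}_{X,x}\to\mathcal{O}_{Y,y}$ flat, and $\mathfrak{m}_x\mathcal{O}_{Y,y}=\mathfrak{m}_y$", i.e.\ to "$f$ flat at $y$ and unramified at $y$", i.e.\ to "$f$ étale at $y$". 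I would write this out as two short implications: if $f$ is étale at $y$ then it is flat and unramified, giving flatness, $\mathfrak{m}_x\mathcal{O}_{Y,y}=\mathfrak{m}_y$, and separability of the residue extension, hence property (1) of Lemma \ref{lemma-para0} and the separability clause; conversely, given those properties, flatness plus $\mathfrak{m}_x\mathcal{O}_{Y,y}=\mathfrak{m}_y$ plus finite separable residue extension is the definition of flat-and-unramified, hence étale.

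I expect no serious obstacle here: the entire content has been pre-packaged into Lemma \ref{lemma-para0}, and what remains is the dictionary "étale $=$ flat $+$ unramified" together with the standard description of "unramified" at a point in terms of $\mathfrak{m}_x\mathcal{O}_{Y,y}=\mathfrak{m}_y$ and separability of residue fields. The only mild subtlety is keeping track of the distinction between pointwise (stalk-level) statements and statements holding on a neighbourhood---but since "locally of finite presentation" is assumed and all the relevant conditions (flat, unramified, regular) are open, this causes no difficulty; one simply notes that each pointwise condition spreads out to an open neighbourhood of $y$. I would cite \cite[17.3.1]{ega4.1} (or \cite[17.4.1]{ega4.1}) for the characterization of étale via flat and unramified, and \cite[17.3.1, 17.4.1]{ega4.1} for the local description of unramified, and otherwise refer to Lemma \ref{lemma-para0} for everything involving regular local rings.
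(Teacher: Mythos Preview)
Your proposal is correct and takes essentially the same approach as the paper: the paper's proof is a single sentence citing the characterization of \'etale morphisms in \cite[17.6.1(c$'$)]{ega4.4}, which packages exactly the equivalence ``\'etale at $y$ $\Leftrightarrow$ flat at $y$, $\mathfrak{m}_x\mathcal{O}_{Y,y}=\mathfrak{m}_y$, and $k(y)/k(x)$ finite separable'' that you unpack via ``\'etale $=$ flat $+$ unramified''. One minor correction: the EGA references you cite for the unramified/\'etale dictionary (17.3.1, 17.4.1) live in \cite{ega4.4}, not \cite{ega4.1}.
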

\begin{proof}
This follows from the characterization of \'etale morphisms in
\cite[17.6.1(c')]{ega4.4}.
\end{proof}

The following proposition describes the local structure of
semi-stable $sncd$-models.
\begin{prop}\label{prop-sncd}
Assume that $k$ is perfect. Let $\mathscr{X}$ be a regular flat
$R$-scheme locally of finite type, and let $x$ be a closed point
of $\mathscr{X}_s$. Then the following are equivalent:
\begin{enumerate} \item the $R$-scheme $\mathscr{X}$ is
strictly semi-stable at $x$, \item the point $x$  admits an  open
neighbourhood $\mathscr{U}$ in $\mathscr{X}$ such that there exist
integers $m\geq n>0$ and an \'etale $R$-morphism
$$g:\mathscr{U}\to \mathscr{Y}=\Spec R[y_1,\ldots,y_m]/(\pi-\prod_{j=1}^n
y_j)$$ such that $g(x)=O$. Here $O$ denotes the origin in
$\mathscr{Y}_s\subset \A^m_k$.
\end{enumerate}
\end{prop}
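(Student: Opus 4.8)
The plan is to prove the two implications separately, with the bulk of the work in constructing the étale morphism $g$ in the direction (1)$\Rightarrow$(2). Throughout I would work with the local ring $A=\mathcal{O}_{\mathscr{X},x}$; since $\mathscr{X}$ is strictly semi-stable at $x$, there is a regular system of parameters $(x_1,\ldots,x_m)$ in $A$ and a unit $u$ such that $\pi=u\prod_{j=1}^n x_j$ for some $n$ with $m\geq n>0$ (the exponents $N_i$ are $0$ or $1$; after reindexing the ones equal to $1$ come first). The residue field $k_A$ of $A$ equals $k$ since $x$ is a closed point of $\mathscr{X}_s$ and $k$ is perfect (so $k_A/k$ is finite, hence — $k$ being perfect — separable, and $k$ algebraically closed forces $k_A=k$; actually $k$ is only assumed perfect here, so one uses that $k_A$ is a finite separable extension of $k$ and then absorbs it, or simply invokes that for the purposes of étaleness a finite separable residue extension is allowed by Lemma \ref{lemm-para}).

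First I would reduce to the case where the unit $u$ disappears. Since $A$ is regular local with $k$ perfect, one can adjust the regular system of parameters: replace $x_1$ by $x_1'=u x_1$, which is still a regular parameter (multiplying one element of a regular system of parameters by a unit yields again a regular system of parameters), so that after renaming we have $\pi=\prod_{j=1}^n x_j$ exactly in $A$. Now define a ring homomorphism $B:=R[y_1,\ldots,y_m]/(\pi-\prod_{j=1}^n y_j)\to A$ by sending $y_j\mapsto x_j$; this is well-defined precisely because $\pi=\prod_{j=1}^n x_j$ in $A$, and it is an $R$-algebra map. Localizing the source at the prime ideal corresponding to the origin $O$ (i.e. the maximal ideal $(y_1,\ldots,y_m)$, which indeed contains $\pi$ and whose residue field is $k$), we get a local homomorphism $\mathcal{O}_{\mathscr{Y},O}\to A$. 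The key point is that $\mathscr{Y}$ is regular at $O$: the local ring $\mathcal{O}_{\mathscr{Y},O}=R[y_1,\ldots,y_m]_{(y_1,\ldots,y_m)}/(\pi-\prod y_j)$ has dimension $m$ (it is a hypersurface in the regular $(m+1)$-dimensional ring $R[y_1,\ldots,y_m]_{(\pi,y_1,\ldots,y_m)}$) and its maximal ideal is generated by $y_1,\ldots,y_m$, so it is regular with $(y_1,\ldots,y_m)$ a regular system of parameters.

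Next I would apply Lemma \ref{lemm-para} to the morphism $\Spec A\to \mathscr{Y}$ of local schemes, both regular: the residue field extension is $k\to k$ (or a finite separable extension, handled as above), so it remains to check that $\mathcal{O}_{\mathscr{Y},O}\to A$ satisfies the equivalent conditions of Lemma \ref{lemma-para0}. Condition (4) is immediate: the map sends the chosen regular system of parameters $(y_1,\ldots,y_m)$ of $\mathcal{O}_{\mathscr{Y},O}$ to the regular system of parameters $(x_1,\ldots,x_m)$ of $A$, so the induced map $\mathfrak{m}_{\mathscr{Y},O}/\mathfrak{m}^2\otimes k\to \mathfrak{m}_A/\mathfrak{m}_A^2$ carries a basis to a basis and is an isomorphism. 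Hence $\mathcal{O}_{\mathscr{Y},O}\to A$ is (formally) étale on stalks; since $\mathscr{X}$ and $\mathscr{Y}$ are of finite type over $R$, the morphism $\mathscr{X}\to \mathscr{Y}$ defined by the chosen parameters (after shrinking $\mathscr{X}$ to a neighbourhood $\mathscr{U}$ of $x$ on which the $x_j$ are defined and the relation $\pi=\prod_{j=1}^n x_j$ holds — possible since it holds in the stalk, and $n$ can be taken to include all components through $x$ but no others after further shrinking, as in the proof of Corollary \ref{cor-localprop}) is locally of finite presentation and étale at $x$ by Lemma \ref{lemm-para}, hence étale on a possibly smaller neighbourhood, which we rename $\mathscr{U}$. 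By construction $g(x)=O$.

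For the converse (2)$\Rightarrow$(1): étale morphisms transport the property of being strictly semi-stable, since étaleness implies (by Lemma \ref{lemm-para} and the equivalences of Lemma \ref{lemma-para0}) that a regular system of parameters downstairs pulls back to one upstairs and that $\mathfrak{m}_{\mathscr{Y},O}\mathcal{O}_{\mathscr{U},x}=\mathfrak{m}_{\mathscr{U},x}$. The model $\mathscr{Y}$ is strictly semi-stable at $O$: indeed $(y_1,\ldots,y_m)$ is a regular system of parameters at $O$ and $\pi=\prod_{j=1}^n y_j$ has all exponents equal to $1$, so $\mathscr{Y}_s$ has strict normal crossings at $O$ and $\mathcal{O}_{\mathscr{Y}_s,O}=\mathcal{O}_{\mathscr{Y},O}/(\pi)$ is reduced (it is $k[y_1,\ldots,y_m]_{(y_1,\ldots,y_m)}/(y_1\cdots y_n)$, a localization of a reduced ring). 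Pulling back along the local homomorphism $\mathcal{O}_{\mathscr{Y},O}\to \mathcal{O}_{\mathscr{U},x}$, the element $\pi$ is still a product of $n$ members of a regular system of parameters (each to the first power), so $\mathscr{X}$ is strictly semi-stable at $x$ by definition.

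The main obstacle is the direction (1)$\Rightarrow$(2): one must produce an honest morphism of $R$-schemes (not just of local rings or completions) that is étale, which forces the care about clearing the unit $u$ by modifying the regular system of parameters, about the behaviour of the residue field of $x$ over $k$ (where perfectness of $k$ is used), and about shrinking $\mathscr{X}$ so that both the relation $\pi=\prod_{j=1}^n x_j$ holds globally and no extra components of $\mathscr{X}_s$ interfere — all of which is routine once Lemmas \ref{lemma-para0} and \ref{lemm-para} are in hand, but needs to be assembled correctly.
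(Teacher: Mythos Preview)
Your proposal is correct and follows essentially the same argument as the paper: absorb the unit $u$ into $x_1$, define $g$ by $y_j\mapsto x_j$, and verify \'etaleness via Lemma~\ref{lemm-para} by checking that the regular system of parameters $(y_1,\ldots,y_m)$ at $O$ pulls back to $(x_1,\ldots,x_m)$ at $x$, using perfectness of $k$ only to guarantee separability of the residue field extension. Your discussion of the residue field is a bit tangled (you do not need $k_A=k$, only that $k_A/k$ is finite separable, exactly as you eventually say), and the remark about ``no extra components interfering'' when shrinking $\mathscr{U}$ is unnecessary for the statement as given; otherwise your argument matches the paper's.
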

\begin{proof}
Assume that $\mathscr{X}$ is strictly semi-stable at $x$. Then we
have an expression of the form \eqref{eq-sncd} in
$\mathcal{O}_{\mathscr{X},x}$, with each $N_i$ either zero or one.
Permuting the local parameters $x_i$ if necessary, we may assume
that there exists an element $n$ of $\{1,\ldots,m\}$ such that
$N_i=1$ for $i=1,\ldots,n$ and $N_i=0$ for $i>n$. We can also
arrange that $u=1$ by replacing $x_1$ by $ux_1$.

The local parameters $x_i$ are germs of regular functions on
$\mathscr{X}$, and we choose a connected open neighbourhood
$\mathscr{U}$ of $x$ in $\mathscr{X}$ such that $x_i$ is defined
on $\mathscr{U}$ for every $i$. Then, by equation \eqref{eq-sncd}
and our assumptions, there exists a unique morphism of $R$-schemes
$$g:\mathscr{U}\to \mathscr{Y}=\Spec
R[y_1,\ldots,y_m]/(\pi-\prod_{j=1}^n y_j)$$ such that
$x_i=y_i\circ g$ for every $i$.

Note that $g(x)$ is the origin $O$ in $\mathscr{Y}_s$, and that
the pullback by $g$ of the regular system of parameters
$(y_1,\ldots,y_n)$ in $\mathcal{O}_{\mathscr{Y},O}$ is the regular
system of parameters $(x_1,\ldots,x_n)$ in
$\mathcal{O}_{\mathscr{X},x}$. Since $k$ is perfect, we also know
that the residue field at $x$ is separable over the residue field
$k$ at $O$.
 Thus, by Lemma \ref{lemm-para}, the morphism $g$ is \'etale at $x$. Shrinking $\mathscr{U}$, we
may assume that $g$ is \'etale everywhere. This shows that (1)
implies (2).

Conversely, assume that $\mathscr{X}$ and $x$ satisfy (2).
 We put $x_i=y_i\circ g$ for $i=1,\ldots,m$.
Since $h$ is \'etale, we know by Lemma \ref{lemm-para}  that
$(x_1,\ldots,x_m)$ is a regular system of parameters in
$\mathcal{O}_{\mathscr{X},x}$, and this system satisfies the
equation $$\pi=\prod_{j=1}^n x_j.$$  Thus $\mathscr{X}$ is
strictly semi-stable at $x$.
\end{proof}

Let $\mathscr{X}$ be a regular flat $R$-scheme, and let $x$ be a
point of $\mathscr{X}_s$. We say that $\mathscr{X}_s$ has normal
crossings at $x$ if there exists an \'etale morphism of
$R$-schemes $h:\mathscr{Z} \to \mathscr{X}$  such that
$\mathscr{Z}_s$ has strict normal crossings at some point $z$ of
$h^{-1}(x)$. Note that $\mathscr{Z}$ is regular and $R$-flat since
$h$ is \'etale \cite[17.5.8 and 17.6.1]{ega4.4}. If $\mathscr{X}$
is locally of finite type over $R$, then it follows from Corollary
\ref{cor-localprop} that the locus of points of $\mathscr{X}_s$
where $\mathscr{X}_s$ has normal crossings is open in
$\mathscr{X}_s$, since the image of an \'etale morphism of
$R$-schemes $\mathscr{Z}\to \mathscr{X}$ is open in $\mathscr{X}$
\cite[2.4.6]{ega4.2}.

We say that $\mathscr{X}$ is semi-stable at $x$ if $\mathscr{X}_s$
has normal crossings at $x$ and, moreover,
$\mathcal{O}_{\mathscr{X}_s,x}$ is reduced. We call $\mathscr{X}$
an $ncd$-model if $\mathscr{X}$ is separated and of finite type
over $R$
 and $\mathscr{X}_s$ has normal crossings at every point of
$\mathscr{X}_s$.
 An $ncd$-model $\mathscr{X}$ is called semi-stable
if $\mathscr{X}_s$ is reduced. This is equivalent to the property
that $\mathscr{X}$ is semi-stable at every point of
$\mathscr{X}_s$.

 It follows from Proposition \ref{prop-sncd} that, if $k$ is perfect, the generic fiber
$\mathscr{X}\times_R K$ of a proper semi-stable $sncd$-model
$\mathscr{X}$ is smooth over $K$. This implies that, if $k$ is
perfect, the generic fiber of a proper semi-stable $ncd$-model is
also smooth over $K$, since it has a $K$-smooth \'etale cover. The
properness assumption is needed to ensure that every collection of
open subsets of $\mathscr{X}$ that covers $\mathscr{X}_s$ also
covers $\mathscr{X}$.

\begin{prop}\label{prop-hensel}
Let $\mathscr{X}$ be a regular flat $R$-scheme, and let $x$ be a
point of $\mathscr{X}_s$. Let $y$ be a geometric point centered at
$x$, and denote by $\mathscr{Y}$ the strict henselization of
$\mathscr{X}$ at $y$. Then $\mathscr{Y}$ is a regular flat
$R$-scheme, and $\mathscr{X}_s$ has normal crossings at $x$ if and
only if $\mathscr{Y}_s$ has strict normal crossings at $y$.
\end{prop}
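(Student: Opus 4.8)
The plan is to derive the equivalence from two standard features of the strict henselization $\mathcal{O}_{\mathscr{X},y}^{\mathrm{sh}}$: it is a filtered colimit of the local rings $\mathcal{O}_{\mathscr{X}',x'}$ of étale neighbourhoods $(\mathscr{X}',x')$ of $(\mathscr{X},x)$, and it is unchanged when $\mathscr{X}$ is replaced by an étale cover; and from Lemma \ref{lemma-para0}, which transports regular systems of parameters along the flat local homomorphisms that occur. Write $B=\mathcal{O}_{\mathscr{Y},y}=\mathcal{O}_{\mathscr{X},y}^{\mathrm{sh}}$, so that $\mathscr{Y}=\Spec B$ with closed point $y$; since $\pi$ lies in the maximal ideal of $\mathcal{O}_{\mathscr{X},x}$ we have $\pi\in\mathfrak{m}_B$, so $y\in\mathscr{Y}_s$. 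Each ring $\mathcal{O}_{\mathscr{X}',x'}$ is regular and $R$-flat, being the local ring of something étale over the regular, $R$-flat ring $\mathcal{O}_{\mathscr{X},x}$ exactly as recorded in the definition of normal crossings \cite[17.5.8 and 17.6.1]{ega4.4}; since $B$ is Noetherian \cite{ega4.4}, the colimit $B$ is again regular, and it is $R$-flat as a flat $\mathcal{O}_{\mathscr{X},x}$-algebra. This proves the first assertion. Finally, $\mathcal{O}_{\mathscr{X},x}\to B$ is flat with $\mathfrak{m}_{\mathcal{O}_{\mathscr{X},x}}B=\mathfrak{m}_B$, so it satisfies the equivalent conditions of Lemma \ref{lemma-para0}; the same holds for the strict henselization map out of any of the regular local rings appearing below.

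Suppose first that $\mathscr{X}_s$ has normal crossings at $x$, witnessed by an étale morphism $h:\mathscr{Z}\to\mathscr{X}$ of $R$-schemes and a point $z\in h^{-1}(x)$ at which $\mathscr{Z}_s$ has strict normal crossings. Since $\kappa(z)/\kappa(x)$ is finite separable, the geometric point $y$ lifts to a geometric point $\bar z$ of $\mathscr{Z}$ centered at $z$, and by invariance of strict henselization under étale localization \cite{ega4.4} we may identify $\mathcal{O}_{\mathscr{Z},\bar z}^{\mathrm{sh}}$ with $B$. We are thus reduced to the case in which $\mathscr{X}_s$ has strict normal crossings at $x$ already, say $\pi=u\prod_{i=1}^m x_i^{N_i}$ in $\mathcal{O}_{\mathscr{X},x}$ with $(x_1,\ldots,x_m)$ a regular system of parameters and $u$ a unit. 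By Lemma \ref{lemma-para0} the images of $x_1,\ldots,x_m$ form a regular system of parameters in $B$, while $u$ stays a unit, so $\mathscr{Y}_s$ has strict normal crossings at $y$.

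Conversely, suppose $\mathscr{Y}_s$ has strict normal crossings at $y$, say $\pi=v\prod_{i=1}^m z_i^{M_i}$ in $B$ with $(z_1,\ldots,z_m)$ a regular system of parameters and $v\in B^{\times}$. This finite collection of data, together with the displayed identity, already lives in $\mathcal{O}_{\mathscr{X}',x'}$ for some étale neighbourhood $(\mathscr{X}',x')$ of $(\mathscr{X},x)$: there are an étale $h:\mathscr{Z}\to\mathscr{X}$, a point $z\in h^{-1}(x)$, elements $\tilde z_1,\ldots,\tilde z_m\in\mathcal{O}_{\mathscr{Z},z}$ lifting the $z_i$, and a unit $\tilde v\in\mathcal{O}_{\mathscr{Z},z}^{\times}$ with $\pi=\tilde v\prod_{i=1}^m \tilde z_i^{M_i}$. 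Now $\mathcal{O}_{\mathscr{Z},z}$ is regular and $\mathcal{O}_{\mathscr{Z},z}\to\mathcal{O}_{\mathscr{Z},\bar z}^{\mathrm{sh}}\cong B$ satisfies the conditions of Lemma \ref{lemma-para0}; since the images $(z_1,\ldots,z_m)$ form a regular system of parameters in $B$, the tuple $(\tilde z_1,\ldots,\tilde z_m)$ is a regular system of parameters in $\mathcal{O}_{\mathscr{Z},z}$. Restricting $\mathscr{Z}$ to an open neighbourhood of $z$ on which the $\tilde z_i$ and $\tilde v$ are regular functions with $\tilde v$ invertible, we conclude that $\mathscr{Z}_s$ has strict normal crossings at $z$, whence $\mathscr{X}_s$ has normal crossings at $x$.

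The argument is largely bookkeeping with strict henselizations; the one point that must be kept straight is the distinction between the two uses of étale localization — the description of $B$ as a colimit of étale local rings, used to descend the equation for $\pi$ to a genuine étale neighbourhood of $x$, and the étale-invariance of $B$, used to identify the strict henselizations of $\mathscr{X}$ and of the auxiliary cover $\mathscr{Z}$ — after which Lemma \ref{lemma-para0}, applied in the appropriate direction each time, transfers the regular systems of parameters. A subsidiary, routine point is the finite-presentation/colimit argument underlying the descent in the converse direction.
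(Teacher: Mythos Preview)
Your proof is correct and follows essentially the same route as the paper's: both directions rest on Lemma \ref{lemma-para0} to transport regular systems of parameters, the forward direction uses that the strict henselization of $\mathscr{X}$ at $y$ coincides with that of the \'etale cover $\mathscr{Z}$ at a lift of $y$ (the paper phrases this via \cite[18.8.4]{ega4.4} as a morphism $\mathscr{Y}\to\mathscr{Z}$), and the converse descends the strict-normal-crossings equation from $B$ to a local ring essentially \'etale over $\mathcal{O}_{\mathscr{X},x}$ via the colimit description \cite[18.8.7]{ega4.4}, then spreads this out to an \'etale $\mathscr{X}$-scheme. Your citations could be made more precise (the paper invokes \cite[18.8.13]{ega4.4} for regularity and \cite[18.8.8(iii)]{ega4.4} for the hypotheses of Lemma \ref{lemma-para0}), and the final ``restricting $\mathscr{Z}$ to an open neighbourhood'' step is unnecessary since strict normal crossings at $z$ is a property of the local ring $\mathcal{O}_{\mathscr{Z},z}$, but neither point affects correctness.
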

\begin{proof}
The scheme $\mathscr{Y}$ is regular \cite[18.8.13]{ega4.4} and the
local homomorphism $\mathcal{O}_{\mathscr{X},x}\to
\mathcal{O}(\mathscr{Y})$ satisfies the equivalent properties of
Lemma \ref{lemma-para0}, by \cite[18.8.8(iii)]{ega4.4}. Assume
that $\mathscr{X}_s$ has normal crossings at $x$, and choose an
\'etale morphism of $R$-schemes $h:\mathscr{Z}\to \mathscr{X}$
such that $\mathscr{Z}_s$ has strict normal crossings at some
point $z$ of $h^{-1}(x)$. Then by \cite[18.8.4]{ega4.4}, there
exists a morphism of $\mathscr{X}$-schemes $\mathscr{Y}\to
\mathscr{Z}$ that maps $y$ to $z$. Applying Lemma \ref{lemm-para}
to $h$, we see that the local homomorphism
$\mathcal{O}_{\mathscr{Z},z}\to \mathcal{O}(\mathscr{Y})$ also
satisfies the equivalent properties of Lemma \ref{lemma-para0},
which immediately implies that $\mathscr{Y}_s$ has strict normal
crossings at $y$.

Suppose, conversely, that $\mathscr{Y}_s$ has strict normal
crossings at $y$, and choose an equation of type \eqref{eq-sncd}
in $\mathcal{O}(\mathscr{Y})$. By construction
\cite[18.8.7]{ega4.4}, the ring $\mathcal{O}(\mathscr{Y})$ is a
direct limit of local rings that are essentially \'etale over
$\mathcal{O}_{\mathscr{X},x}$, thus we can find such a local ring
$A$ such that $u,x_1,\ldots,x_m$ lift to $A$, $u$ is a unit in $A$
and the equality \eqref{eq-sncd} holds in $A$. The ring
$\mathcal{O}(\mathscr{Y})$ is also the strict henselization of $A$
at $y$, so that the tuple $(x_1,\ldots,x_m)$ is a regular system
of parameters in $A$ by Lemma \ref{lemma-para0}. Since $A$ is
essentially \'etale over $\mathcal{O}_{\mathscr{X},x}$, it follows
from \cite[8.8.2]{ega4.3} that we can find an \'etale
$\mathscr{X}$-scheme $\mathscr{Z}$ and a point $z$ of
$\mathscr{Z}$ lying over $x$ such that $A$ and
$\mathcal{O}_{\mathscr{Z},z}$ are isomorphic as
$\mathcal{O}_{\mathscr{X},x}$-algebras. Then $\mathscr{Z}_s$ has
strict normal crossings at $z$. It follows that $\mathscr{X}_s$
has normal crossings at $x$.
\end{proof}

 Let $\mathscr{X}$ be a regular flat $R$-scheme, and let $\mathscr{Y}\to \mathscr{X}$ be an \'etale morphism.
 Then, as was already mentioned above, $\mathscr{Y}$
 is regular  and $R$-flat. Let $y$ be a
 point of $\mathscr{Y}_s$, and denote by $x$ its image in
 $\mathscr{X}_s$. The following properties follow easily from
 Lemma \ref{lemm-para} and Proposition \ref{prop-hensel}:
\begin{itemize}
\item if $\mathscr{X}_s$ has strict normal crossings at $x$ then
$\mathscr{Y}_s$ has strict normal crossings at $y$; \item if
$\mathscr{X}_s$ is strictly semi-stable at $x$ then
$\mathscr{Y}_s$ is strictly semi-stable at $y$; \item
$\mathscr{Y}_s$ has normal crossings at
 $y$ if and only if $\mathscr{X}_s$ has normal crossings at $x$;
\item $\mathscr{Y}$ is semi-stable at $y$ if and only if
$\mathscr{X}$ is semi-stable at $x$.
\end{itemize}
(Note that $\mathscr{Y}_s$ is reduced at $y$ if and only if
$\mathscr{X}_s$ is reduced at $x$, by \cite[17.5.7]{ega4.4}.)

%

\begin{prop}\label{prop-ncd}
Let $\mathscr{X}$ be a regular flat $R$-scheme, and let $x$ be a
point of $\mathscr{X}_s$ such that $\mathscr{X}_s$ has normal
crossings at $x$. Then $\mathscr{X}_s$ has strict normal crossings
at $x$ if and only if every irreducible component of
$\mathscr{X}_s$ that passes through $x$ (endowed with its reduced
induced structure) is regular at $x$.
\end{prop}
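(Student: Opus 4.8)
The plan is to obtain the implication ``strict normal crossings at $x$ $\Rightarrow$ every component through $x$ is regular at $x$'' for free from Lemma~\ref{lemm-sncdequiv}, and to prove the converse by base change to the strict henselization at $x$, where Proposition~\ref{prop-hensel} already guarantees strict normal crossings.

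For the easy direction I would simply note that if $\mathscr{X}_s$ has strict normal crossings at $x$, then condition~(1) of Lemma~\ref{lemm-sncdequiv} holds, hence so does condition~(2); specializing that condition to the one-element subsets $J=\{j\}$ says precisely that each irreducible component $E_j$ of $\mathscr{X}_s$ through $x$ is regular (of dimension $d-1$) at $x$.

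For the converse, assume $\mathscr{X}_s$ has normal crossings at $x$ and that every irreducible component of $\mathscr{X}_s$ through $x$ is regular at $x$. Writing $(A,\mathfrak{m})=\mathcal{O}_{\mathscr{X},x}$, which is a regular local ring and hence a UFD, I would fix a maximal tuple $(x_1,\dots,x_n)$ of non-associated prime factors of $\pi$ in $A$, so that $\pi$ is a unit times $\prod_i x_i^{N_i}$ with each $N_i\ge1$. The first step is to show that each $x_i$ lies in $\mathfrak{m}\setminus\mathfrak{m}^2$: since $\pi$ is a nonzerodivisor (flatness over $R$), the ring $A/(\pi)=\mathcal{O}_{\mathscr{X}_s,x}$ is Cohen--Macaulay of dimension $d-1$, hence equidimensional, so $\Spec(A/(x_i))$ is the localization at $x$ of an irreducible component of $\mathscr{X}_s$ through $x$; by hypothesis $A/(x_i)$ is then regular, which for $x_i\in\mathfrak{m}$ forces $x_i\notin\mathfrak{m}^2$. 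Next I would pass to the strict henselization $\mathscr{Y}$ of $\mathscr{X}$ at a geometric point $y$ centered at $x$, with $(B,\mathfrak{n})=\mathcal{O}(\mathscr{Y})$; by Proposition~\ref{prop-hensel} the fiber $\mathscr{Y}_s$ has strict normal crossings at $y$, which furnishes a regular system of parameters $(z_1,\dots,z_m)$ of the (again UFD) ring $B$ and a unit $v$ with $\pi=v\prod_{j=1}^{n'}z_j^{M_j}$, $M_j\ge1$, so that $z_1,\dots,z_{n'}$ are, up to units, the prime factors of $\pi$ in $B$. The local homomorphism $A\to B$ is faithfully flat and satisfies the equivalent conditions of Lemma~\ref{lemma-para0}; thus $\mathfrak{m}B=\mathfrak{n}$ and the induced map $(\mathfrak{m}/\mathfrak{m}^2)\otimes_{k(x)}k(y)\xrightarrow{\sim}\mathfrak{n}/\mathfrak{n}^2$ is an isomorphism, so each $x_i$ still lies in $\mathfrak{n}\setminus\mathfrak{n}^2$. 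Since $x_i$ divides $\pi$ in the UFD $B$ but is not in $\mathfrak{n}^2$, it must be a unit times a single $z_{j(i)}$, and $i\mapsto j(i)$ is injective by faithfully flat descent of the ideals $(x_i)$ (two of them generating the same ideal of $B$ would generate the same ideal of $A$, contradicting the choice of non-associated factors). Hence the images of $x_1,\dots,x_n$ in $\mathfrak{n}/\mathfrak{n}^2$ are nonzero scalar multiples of $n$ distinct members of the basis $(\bar z_1,\dots,\bar z_m)$, so they are linearly independent; transporting this back through the isomorphism above, the images of $x_1,\dots,x_n$ in $\mathfrak{m}/\mathfrak{m}^2$ are linearly independent. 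By \cite[17.1.7]{ega4.1}, $(x_1,\dots,x_n)$ is then part of a regular system of parameters in $A$, and together with the factorization of $\pi$ this is exactly strict normal crossings at $x$.

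The main obstacle I anticipate is the first step of the converse, namely translating the hypothesis about the \emph{global} irreducible components of $\mathscr{X}_s$ into the purely local statement $x_i\notin\mathfrak{m}^2$. This requires identifying, near $x$, the component of $\mathscr{X}_s$ cut out by a prime factor $x_i$ of $\pi$ with $\Spec(A/(x_i))$, which rests on the fact that $\mathscr{X}_s$, being defined by a nonzerodivisor in the regular scheme $\mathscr{X}$, is Cohen--Macaulay and equidimensional at $x$, so that its branches at $x$ are exactly the localizations of the components through $x$. Once this is in place, the remainder is a routine comparison of the prime factorizations of $\pi$ in $A$ and in its strict henselization.
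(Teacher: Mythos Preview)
Your proof is correct and follows essentially the same route as the paper's: the easy direction comes from Lemma~\ref{lemm-sncdequiv}, and for the converse you fix non-associated prime factors $x_1,\dots,x_n$ of $\pi$ in $\mathcal{O}_{\mathscr{X},x}$, use regularity of the components to get $x_i\notin\mathfrak m^2$, pass to the strict henselization, and invoke Proposition~\ref{prop-hensel}. The only cosmetic difference is in the endgame inside the henselization $B$: the paper observes that each $x_i$, being part of a regular system of parameters, is prime in $B$, and that the $x_i$ remain non-associated there by faithful flatness, so by the very definition of strict normal crossings in $B$ the tuple $(x_1,\dots,x_n)$ is part of a regular system of parameters; you instead match each $x_i$ to a specific parameter $z_{j(i)}$ and transfer linear independence in the cotangent space back down. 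Both arguments are equivalent.

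One small remark on your ``main obstacle'': the Cohen--Macaulay/equidimensionality detour is unnecessary. For any locally Noetherian scheme $Y$ and point $x\in Y$, the irreducible components of $Y$ through $x$ correspond bijectively to the minimal primes of $\mathcal{O}_{Y,x}$; applied to $Y=\mathscr{X}_s$ this already identifies each $\Spec(A/(x_i))$ with the localization of a component through $x$, so regularity of that component immediately gives $A/(x_i)$ regular, i.e.\ $x_i\notin\mathfrak m^2$ (this is exactly how the paper phrases it, citing \cite[17.1.7]{ega4.1}).
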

\begin{proof}
If $\mathscr{X}_s$ has strict normal crossings at $x$, then every
irreducible component of $\mathscr{X}_s$ that passes through $x$
is regular at $x$, by Lemma \ref{lemm-sncdequiv}. Conversely,
assume that every irreducible component of $\mathscr{X}_s$ that
passes through $x$ is regular at $x$. 
Let $(x_1,\ldots,x_n)$ be a tuple of non-associated prime factors
of $\pi$ in $\mathcal{O}_{\mathscr{X},x}$. It's enough to show
that this tuple is part of a regular system of parameters in
$\mathcal{O}_{\mathscr{X},x}$. By \cite[18.8.8(iii)]{ega4.4} and
Lemma \ref{lemma-para0}, we can verify this in a strict
henselization $A$ of $\mathcal{O}_{\mathscr{X},x}$.

Locally at $x$, each of the equations $x_i=0$ defines an
irreducible component of $\mathscr{X}_s$, so that  $x_i$ is part
of a regular system of parameters by \cite[17.1.7]{ega4.1}.
  It follows from \cite[18.8.8(iii)]{ega4.4} and Lemma
\ref{lemma-para0} that the image of $x_i$ in $A$ is still part of
a regular system of parameters. In particular, this element is
prime. Moreover, \cite[4.C(ii)]{matsumura} implies that $x_i$ is
not associated to $x_j$ in $A$ if $i$ and $j$ are distinct
elements of $\{1,\ldots,n\}$, because $x_i$ and $x_j$ are not
associated in $\mathcal{O}_{\mathscr{X},x}$ and $A$ is faithfully
flat over $\mathcal{O}_{\mathscr{X},x}$. Thus $x_1,\ldots,x_n$ are
non-associated prime factors of $\pi$ in $A$.
 It follows that $(x_1,\ldots,x_n)$ is part of a regular system of parameters
in $A$, because $(\Spec A)_s$ has strict normal crossings at its
unique closed point, by Proposition \ref{prop-hensel}.
\end{proof}

\begin{definition}
If $X$ is a proper $K$-scheme, then a model of $X$ is a flat
proper $R$-scheme $\mathscr{X}$ endowed with an isomorphism of
$K$-schemes
$$\mathscr{X}\times_R K\rightarrow X.$$
We say that $\mathscr{X}$ is an $ncd$-model (resp. $sncd$-model)
of $X$ if, moreover, $\mathscr{X}$ is regular and $\mathscr{X}_s$
has normal crossings (resp. strict normal crossings) at every
point of $\mathscr{X}_s$. This implies that $X$ is regular.
\end{definition}
A morphism of models of $X$ is an $R$-morphism $h$ such that the
induced morphism $h_K$ between the generic fibers commutes with
the respective isomorphisms to $X$. In particular, $h_K$ is an
isomorphism.
 We say that a regular model
$\mathscr{X}$ of $X$ is {\em relatively minimal} if every morphism
to another regular model is an isomorphism. We say that
$\mathscr{X}$ is {\em minimal} if, up to isomorphism, it is the
unique relatively minimal  regular model. The analogous
terminology applies to $ncd$-models and $sncd$-models.
\subsection{The case of curves}
From now on, we'll assume that $R$ is henselian and that $k$ is
algebraically closed. To describe the geometry of models of
curves, we gather some results from \cite{liu}. Beware that
 the author of \cite{liu} uses the term ``normal crossings'' where
we use ``strict normal crossings'', see  \cite[9.1.6 and
9.1.7]{liu}. The key lemma for minimality issues of $sncd$-models
is \cite[9.3.35]{liu}. Unfortunately, this statement is not
entirely correct. In our situation, it can be corrected and
generalized as follows.

\begin{lemma}\label{lemm-nosncd}
Let $\mathscr{X}$ be a regular flat $R$-scheme of pure dimension
two. Let $x$ be a point of $\mathscr{X}_s$ where $\mathscr{X}_s$
has normal crossings. Then $\mathscr{X}_s$ has strict normal
crossings at $x$ unless $x$ lies on precisely one irreducible
component $\Gamma$ of $\mathscr{X}_s$ and $x$ is a singular point
of $\Gamma$ (with its reduced induced structure).
\end{lemma}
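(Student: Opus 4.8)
\emph{Proof plan.} The plan is to reduce everything to the strict henselization of $\mathscr{X}$ at $x$, where ``normal crossings'' becomes the explicit factorization \eqref{eq-sncd}, and to show that a failure of \emph{strict} normal crossings is forced by a single ``branched'' component.

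I would first note that we may assume $x$ is a closed point of $\mathscr{X}_s$, since at the generic point of a component $\mathcal{O}_{\mathscr{X},x}$ is a discrete valuation ring and the statement is immediate; then $\mathcal{O}_{\mathscr{X},x}$ is a two-dimensional regular local ring, hence a UFD. Arguing by contraposition, I would assume $\mathscr{X}_s$ does \emph{not} have strict normal crossings at $x$; by Proposition~\ref{prop-ncd} (applicable because $\mathscr{X}_s$ has normal crossings at $x$) there is then an irreducible component $\Gamma$ of $\mathscr{X}_s$ through $x$ whose reduced local ring is not regular. Since $\Gamma$ is a prime divisor on the regular scheme $\Spec\mathcal{O}_{\mathscr{X},x}$, we have $\Gamma=V(f)$ for a prime element $f$ dividing $\pi$, and non-regularity of $\mathcal{O}_{\mathscr{X},x}/(f)$ means precisely $f\in\mathfrak{m}_{\mathscr{X},x}^2$. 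The goal becomes: show $\Gamma$ is the only component of $\mathscr{X}_s$ through $x$.

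Next I would pass to $B:=\mathcal{O}(\mathscr{Y})$, the strict henselization of $\mathscr{X}$ at a geometric point over $x$ (Proposition~\ref{prop-hensel}). It is again a two-dimensional regular local ring (a UFD), the homomorphism $\mathcal{O}_{\mathscr{X},x}\to B$ satisfies the conditions of Lemma~\ref{lemma-para0}, and $\mathscr{Y}_s$ has strict normal crossings at its closed point, so $\pi=u\,a_1^{N_1}a_2^{N_2}$ in $B$ for a unit $u$, a regular system of parameters $(a_1,a_2)$, and $N_1,N_2\geq 0$. Since $f\mid\pi$ in $B$ and the only prime divisors of $\pi$ in $B$ are $a_1,a_2$ up to units, $f=(\text{unit})\,a_1^{b}a_2^{c}$ with $b,c\geq 0$ not both zero. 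I would then pin down $(b,c)$ using two facts: (i) $B/(f)$ is the strict henselization of the domain $\mathcal{O}_{\Gamma,x}$, hence reduced, so $a_1^{b}a_2^{c}$ is square-free, i.e.\ $b,c\leq 1$; (ii) $f\in\mathfrak{m}_{\mathscr{X},x}^2$ forces, via Lemma~\ref{lemma-para0}, the image of $f$ to lie in $\mathfrak{m}_B^2$, so $b+c\geq 2$. Hence $(b,c)=(1,1)$ and $f=(\text{unit})\,a_1a_2$ in $B$.

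Finally, to rule out a second component $\Gamma'=V(f')$ through $x$ (with $f'$ prime, $f'\mid\pi$, not associated to $f$), I would run the same analysis on $\Gamma'$ to get $f'=(\text{unit})\,a_1^{b'}a_2^{c'}$ with $b',c'\in\{0,1\}$ not both zero, where $(b',c')\neq(1,1)$ because $f$ and $f'$ remain non-associated in $B$ ($B$ being faithfully flat over $\mathcal{O}_{\mathscr{X},x}$, cf.~\cite[4.C(ii)]{matsumura}). Then $(f,f')B$ equals $(a_1)B$ or $(a_2)B$, of height one; but $\Gamma,\Gamma'$ are distinct prime divisors on the two-dimensional regular local scheme $\Spec\mathcal{O}_{\mathscr{X},x}$, so $(f,f')$ is $\mathfrak{m}_{\mathscr{X},x}$-primary, and since $\mathfrak{m}_{\mathscr{X},x}B=\mathfrak{m}_B$ the ideal $(f,f')B$ would be $\mathfrak{m}_B$-primary of height two --- a contradiction. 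Thus $\Gamma$ is the unique component of $\mathscr{X}_s$ through $x$ and it is singular there, which is exactly the dichotomy in the statement. The part I expect to be the main obstacle is this last step together with fact (i): recognising that non-regularity of a component forces the ``nodal'' shape $f=(\text{unit})\,a_1a_2$ in the strict henselization, and then converting the resulting height/primality clash --- using that $\mathfrak{m}$-primarity of the intersection ideal is preserved under the faithfully flat base change to $B$ --- into the uniqueness of $\Gamma$.
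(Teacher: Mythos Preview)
Your argument is correct. Both your proof and the paper's reduce to the (strict) henselization and rely on the same two ingredients: that the quotient by a prime factor of $\pi$ remains reduced after henselization, and faithful flatness to transport ideal-theoretic information between $\mathcal{O}_{\mathscr{X},x}$ and its henselization.

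The organizational difference is worth noting. The paper argues \emph{directly} by cases on the number of components through $x$: the one-component case is Proposition~\ref{prop-ncd}, and in the two-component case one shows that the two prime factors $x_1,x_2$ of $\pi$ remain non-associated primes in the henselization $A$ (reducedness of $A/(x_i)$ plus a zero-divisor argument), hence form a regular system of parameters there because $(\Spec A)_s$ already has strict normal crossings. Your contrapositive instead starts from a singular component $\Gamma$, shows its local equation becomes $(\text{unit})\,a_1a_2$ in $B$ (so $\Gamma$ absorbs both branches), and then derives a height contradiction if a second component existed. Your route makes the geometric picture --- that a node on one component uses up both local parameters --- more explicit, at the cost of a slightly longer argument; the paper's version is a bit more economical because it never needs to name the ``bad'' factorization $f=(\text{unit})\,a_1a_2$ or run the height comparison. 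Either way the substance is the same.
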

\begin{proof}
Since $\mathcal{O}_{\mathscr{X},x}$ has dimension two and
$\mathscr{X}_s$ has normal crossings at $x$, the point $x$ can lie
on at most two irreducible components of $\mathscr{X}_s$.  If $x$
lies on only one irreducible component $\Gamma$ of
$\mathscr{X}_s$, then $\mathscr{X}_s$ has strict normal crossings
at $x$ if and only if $\Gamma$ is regular at $x$, by Proposition
\ref{prop-ncd}. So we may assume that $x$ lies on two distinct
irreducible components of $\mathscr{X}_s$. Then $\pi$ has
precisely two non-associated prime factors $x_1$ and $x_2$ in
$\mathcal{O}_{\mathscr{X},x}$, and it is enough to show that
$(x_1,x_2)$ is a regular system of parameters in
$\mathcal{O}_{\mathscr{X},x}$.

Let $A$ be the henselization of $\mathcal{O}_{\mathscr{X},x}$. It
is a regular \cite[18.6.10]{ega4.4} and the local homomorphism
$\mathcal{O}_{\mathscr{X},x}\to A$ satisfies the equivalent
properties in Lemma \ref{lemma-para0}, by
\cite[18.6.6(iii)]{ega4.4}. Thus it suffices to prove that
$(x_1,x_2)$ is a regular system of parameters in $A$. We know by
Proposition \ref{prop-hensel} that $(\Spec A)_s$ has strict normal
crossings at $x$, so that we only have to show that $x_1$ and
$x_2$ are non-associated prime factors of $\pi$ in $A$. For
$i=1,2$, the ring $A/(x_i)$ is reduced because it is the
henselization of the reduced local ring
$\mathcal{O}_{\mathscr{X},x}/(x_i)$ \cite[18.6.8 and
18.6.9]{ega4.4}. Moreover, $x_1$ and $x_2$ have no common prime
factor in $A$ because $x_1$ is not a zero-divisor in
$B=\mathcal{O}_{\mathscr{X},x}/(x_2)$ so that it cannot be a zero
divisor in the faithfully flat $B$-algebra $A/(x_2)$. It follows
that $x_1$ and $x_2$ are
 the two non-associated prime factors of $\pi$ in $A$.
\end{proof}

\begin{prop}\label{prop-liucor}
Let $\mathscr{X}$ be a regular flat $R$-scheme of pure dimension
two. Let $x$ be a closed point of $\mathscr{X}_s$. We denote by
$f:\mathscr{X}'\to \mathscr{X}$ the blow-up of $\mathscr{X}$ at
$x$, and by $E=f^{-1}(x)$ the exceptional divisor of $f$.
\begin{enumerate}
\item If $\mathscr{X}_s$ has normal crossings at $x$, then
$\mathscr{X}'_s$ has strict normal crossings at every point of
$E$, and $E$ meets the other irreducible components of
$\mathscr{X}'_s$ in at most two points.

 \item Assume that $\mathscr{X}'_s$ has normal crossings at every point of $E$. Then $\mathscr{X}_s$ has normal crossings at $x$ if and
only if $E$ meets the other irreducible components of
$\mathscr{X}'_s$ in at most two points.

 \item Assume that $\mathscr{X}'_s$
has normal crossings at every point of $E$. Then $\mathscr{X}_s$
has strict normal crossings at $x$ if and only if $E$ meets the
other components of $\mathscr{X}'_s$ in at most two points and $E$
does not intersect any other component twice.
\end{enumerate}
\end{prop}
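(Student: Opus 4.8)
The plan is to reduce everything to a computation in a strict henselization and transfer back using Propositions~\ref{prop-hensel} and~\ref{prop-ncd}. Let $A$ be the strict henselization of $\mathcal{O}_{\mathscr{X},x}$ at a geometric point over $x$; it is a two-dimensional regular local ring, hence a UFD, with algebraically closed residue field $k$. Put $\mathscr{X}_A=\Spec A$, let $f_A:\mathscr{X}'_A\to\mathscr{X}_A$ be the blow-up at the closed point $x_A$, and let $E_A$ be its exceptional divisor. Since $\mathscr{X}_A\to\mathscr{X}$ is a limit of étale morphisms and $\mathfrak{m}_xA=\mathfrak{m}_{x_A}$, blowing up commutes with this base change, so $\mathscr{X}'_A=\mathscr{X}'\times_{\mathscr{X}}\mathscr{X}_A$, and the induced morphism $E_A\to E$ is an isomorphism (both are $\mathbb{P}^1_k$, and the map is induced by the isomorphism $\mathfrak{m}_x/\mathfrak{m}_x^2\otimes_kk\to\mathfrak{m}_{x_A}/\mathfrak{m}_{x_A}^2$ of Lemma~\ref{lemma-para0}). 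As $\mathscr{X}'_A\to\mathscr{X}'$ is a limit of étale morphisms inducing an isomorphism on residue fields at corresponding points $e'\in E_A$ and $e\in E$, the schemes $\mathscr{X}'$ and $\mathscr{X}'_A$ have a common strict henselization there, so by Proposition~\ref{prop-hensel} the special fiber $\mathscr{X}'_s$ has normal crossings at $e$ if and only if $(\mathscr{X}'_A)_s$ has normal crossings at $e'$. Moreover the irreducible component of $\mathscr{X}_s$ through $x$ corresponding to a prime $\mathfrak{q}\subset\mathcal{O}_{\mathscr{X},x}$ pulls back to the union of the $\{p=0\}$ over the primes $p$ of $A$ above $\mathfrak{q}$, and its strict transform pulls back to the union of the corresponding strict transforms, so via the isomorphism $E_A\cong E$ the number of points in which $E$ meets the other irreducible components of $\mathscr{X}'_s$ equals the corresponding number for $E_A$ and $(\mathscr{X}'_A)_s$. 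Finally, Proposition~\ref{prop-ncd} lets us upgrade ``normal crossings'' to ``strict normal crossings'' once we know the relevant strict transforms are regular, which we will see explicitly. Hence we may assume $\mathscr{X}=\Spec A$.

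Write $\pi=u\prod_{j=1}^r p_j^{a_j}$ in $A$ with $u$ a unit, the $p_j$ pairwise non-associated primes and $a_j\ge1$; the components of $\mathscr{X}_s$ through $x$ are then $D_j=\{p_j=0\}$, and we let $m_j\ge1$ be the $\mathfrak{m}_x$-adic order of $p_j$. Two facts about $f:\mathscr{X}'\to\Spec A$ drive the argument. First, $f^{*}D_j=\widetilde{D_j}+m_jE$ as divisors, so, since $E^2=-1$ and $f^{*}D_j\cdot E=0$, one gets $\widetilde{D_j}\cdot E=m_j$. Second, $\widetilde{D_j}$ is the blow-up of the curve $D_j=\Spec(A/p_j)$ at its closed point; since $A$ is henselian, $A/p_j$ is a henselian local domain, its blow-up is finite over it, hence again a henselian local scheme, so $\widetilde{D_j}$ meets $E$ in a single point $e_j$, necessarily with $(\widetilde{D_j}\cdot E)_{e_j}=m_j$. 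In particular $\widetilde{D_j}$ is regular at $e_j$ and transversal to $E$ there if and only if $m_j=1$, in which case $e_j$ is the point of $E=\mathbb{P}(\mathfrak{m}_x/\mathfrak{m}_x^2)$ cut out by the class of $p_j$.

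For (1), assume $\mathscr{X}_s$ has normal crossings at $x$, i.e.\ $(p_1,\dots,p_r)$ is part of a regular system of parameters of $A$; then $r\le2$, each $m_j=1$, and the classes of the $p_j$ are linearly independent in $\mathfrak{m}_x/\mathfrak{m}_x^2$. Completing $(p_1,\dots,p_r)$ to a regular system of parameters and computing in the two standard affine charts of $f$, in which $\pi$ becomes a monomial in the chart coordinates, one checks directly---exactly as in the proof of Proposition~\ref{prop-sncd}---that $\mathscr{X}'_s$ has strict normal crossings at every point of $E$, that the strict transforms $\widetilde{D_j}$ are pairwise disjoint (they meet $E$ at the $r\le2$ distinct points $e_j$), and that $E$ meets no component of $\mathscr{X}'_s$ other than $E$ more than once. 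Transferring these conclusions back through the first paragraph yields assertion (1).

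For (2) and (3), assume $\mathscr{X}'_s$ has normal crossings at every point of $E$, equivalently $(\mathscr{X}'_A)_s$ at every point of $E_A$. Then through each point of $E$ passes at most one component other than $E$, smooth there and transversal to $E$; by the second fact of the second paragraph this forces $m_j=1$ for all $j$ and makes the $e_j$ pairwise distinct, so the number of points in which $E$ meets the other components of $\mathscr{X}'_s$ is exactly $r$. Now $\mathscr{X}_s$ has normal crossings at $x$ if and only if $(p_1,\dots,p_r)$ is part of a regular system of parameters, and since each $m_j=1$ and the classes of the $p_j$ are pairwise non-proportional this holds if and only if $r\le2$; this proves (2), and also shows that $\mathscr{X}_s$ has normal crossings at $x$ as soon as $E$ meets the other components in at most two points. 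Granting the latter, by Proposition~\ref{prop-ncd} it remains to determine when each component $D$ of $\mathscr{X}_s$ through $x$ is regular at $x$. Such a $D$ corresponds to a subset $S\subset\{1,\dots,r\}$ with $D\times_{\mathscr{X}}\Spec A=\bigcup_{j\in S}D_j$; since every $m_j=1$, the component $D$ is regular at $x$ if and only if $|S|=1$, whereas $E$ meets $\widetilde{D}$ in the $|S|$ distinct points $\{e_j:j\in S\}$. Hence every component of $\mathscr{X}_s$ through $x$ is regular at $x$ if and only if every such $S$ is a singleton, if and only if $E$ meets no component of $\mathscr{X}'_s$ other than $E$ twice, which together with the bound $r\le2$ is exactly the condition in (3). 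The step requiring the most care is the second fact in the second paragraph---controlling the exceptional fibre of the strict transform of a branch via the henselian hypothesis---together with the bookkeeping in the first paragraph that matches components and intersection multiplicities upstairs and downstairs.
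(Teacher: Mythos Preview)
Your proof is correct and takes a more explicit, self-contained route than the paper's. The paper proves (1) by passing to an \'etale cover where the crossings are strict, citing \cite[9.2.31]{liu} for the blow-up computation, and then invoking Lemma~\ref{lemm-nosncd} to upgrade normal crossings to strict normal crossings along the regular curve $E$; it proves (3) by correcting the argument of \cite[9.3.35]{liu}; and it deduces (2) from (3) after passing to the strict henselization and using \cite[18.6.8]{ega4.4} to see that no strict transform can meet $E$ twice there. You instead reduce uniformly to the strict henselization at the outset and argue directly with intersection numbers: the identity $f^{*}D_j=\widetilde{D_j}+m_jE$ together with $E^2=-1$ gives $\widetilde{D_j}\cdot E=m_j$, and the henselian hypothesis forces each $\widetilde{D_j}$ to be local, hence to meet $E$ in a single point with local multiplicity $m_j$. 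This makes the combinatorics of (2) and (3) transparent (the number of intersection points is exactly the number $r$ of branches, and a component of $\mathscr{X}_s$ is regular at $x$ iff it contributes a single branch), and the key geometric insight---that over a henselian base the branches cannot spread out over $E$---is exactly what the paper extracts from EGA in its proof of (2).

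One step deserves an explicit citation. When, under the normal-crossings hypothesis on $\mathscr{X}'_s$, you assert that the unique branch $\widetilde{D_j}$ through $e_j$ is ``smooth there and transversal to $E$'', you are using that in dimension two, normal crossings at a point lying on the regular component $E$ and one other component automatically upgrades to \emph{strict} normal crossings there. This is precisely Lemma~\ref{lemm-nosncd} (and the paper invokes it for the same purpose in its proof of (1)); citing it, or noting that the strict henselization at $e_j$ has at most two branches of which $E$ already accounts for one regular one, makes the deduction $(\widetilde{D_j}\cdot E)_{e_j}=1$ and hence $m_j=1$ rigorous.
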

\begin{proof}
Even though we are not dealing with proper $R$-schemes, we can
still borrow most of the arguments from \cite{liu}, using the
intersection theory on regular two-dimensional schemes developed
in \cite{lichtenbaum}.

First, we prove (1). We choose an \'etale morphism of $R$-schemes
$\mathscr{Y}\to \mathscr{X}$ such that $\mathscr{Y}_s$ has strict
normal crossings at some point $y$ lying over $x$. Then we can
apply \cite[9.2.31]{liu} to the blow-up of $\mathscr{Y}$ at $y$.
Since blowing up commutes with flat base change
\cite[8.1.12]{liu}, it follows that $\mathscr{X}'_s$ has normal
crossings at every point of $E$ and that $E$ intersects the other
components of $\mathscr{X}'_s$ in at most two points. The
exceptional curve $E$ is regular, so that Lemma \ref{lemm-nosncd}
implies that $\mathscr{X}'_s$ has strict normal crossings at every
point of $E$.

Now we prove (3). We only indicate where the proof of
\cite[9.3.35]{liu} must be modified.  On line 5 of the proof, it
is tacitly assumed that $\widetilde{\Gamma}$ intersects $E$ in at
most one point. This is not always the case under the hypotheses
of \cite[9.3.35]{liu}; we added it as an assumption in (3). The
argument in \cite[9.3.35]{liu} can be copied  verbatim to prove
point (3) of our proposition.

Finally, we prove (2). Assume that $E$ verifies the conditions in
the statement. We will prove that $\mathscr{X}_s$ has normal
crossings at $x$. The converse implication follows from (1). We
may assume that $\mathscr{X}$ is a strictly henselian local
scheme, by Proposition \ref{prop-hensel} and the fact that blowing
up commutes with flat base change \cite[8.1.12]{liu}. Then it
follows from \cite[18.6.8]{ega4.4} that $E$ cannot meet any other
irreducible component of $\mathscr{X}'_s$ twice, so that we can
deduce from (3) that $\mathscr{X}_s$ has strict normal crossings
at $x$.
\end{proof}

Let $C$ be a smooth, proper, geometrically connected $K$-curve of
genus $g$. The curve $C$ admits a relatively
 minimal regular model $\mathscr{C}$ \cite[10.1.8]{liu}, and every regular model of $C$
admits a morphism to some relatively minimal regular model
 \cite[9.3.19]{liu}. This morphism is a composition
of contractions of irreducible components in the special fiber.

Assume that $g\geq 1$. Then $\mathscr{C}$ is minimal
\cite[9.3.21]{liu}. Repeatedly blowing up $\mathscr{C}$ at points
where $\mathscr{C}_s$ does not have normal crossings, we obtain a
minimal $ncd$-model $\mathscr{C}'$ of $C$.
 Blowing up $\mathscr{C}'$ at the
self-intersection points of the irreducible components of its
special fiber, we obtain a minimal $sncd$-model. This can be
proved as in \cite[9.3.36]{liu}, invoking Proposition
\ref{prop-liucor} instead of \cite[9.3.35]{liu}.

Now suppose that $g=0$. This case is treated in
\cite[pp.\,155--157]{shafarevich} and \cite[Exercise 9.3.1]{liu}.
Under our assumptions ($R$ henselian and $k$ algebraically
closed), the Brauer group of $K$ is trivial \cite[6.2]{dixexp}, so
that the conic $C$ is isomorphic to $\mathbb{P}^1_K$. The
$R$-scheme $\mathbb{P}^1_R$ is a relatively minimal regular model
of $\mathbb{P}^1_K$ which is not minimal. It is also a relatively
minimal $ncd$-model and $sncd$-model of $\mathbb{P}^1_K$.
Moreover, every relatively minimal regular model of
$\mathbb{P}^1_K$ is smooth over $R$, and its special fiber is
isomorphic to $\mathbb{P}^1_k$.


\subsection{Constructions on $sncd$-models}\label{subsec-sncd}
Let $\mathscr{X}$ be an $sncd$-model over $R$. We put
$X=\mathscr{X}\times_R K$. We write
$$\mathscr{X}_s=\sum_{i\in I}N_i E_i$$
where $E_i,\,i\in I$ are the irreducible components of
$\mathscr{X}_s$, and $N_i$ is the multiplicity of $E_i$ in the
Cartier divisor $\mathscr{X}_s$ on $\mathscr{X}$. For each $i\in
I$, we denote by $N_i'$ the largest divisor of $N_i$ that is not
divisible by $p$. Note that $N'_i=N_i$ if $p=0$.

Let $J$ be a non-empty subset of $I$. We set
$$N'_J=\gcd\{N'_j\,|\,j\in J\}.$$
Moreover, we put \begin{eqnarray*} E_J&=&\bigcap_{j\in J}E_j
\\E_J^o&=&E_J\setminus (\bigcup_{i\in I\setminus J}E_i).
\end{eqnarray*}
The set
$$\{E_J^o\,|\,\emptyset\neq J\subset I\}$$
is a partition of $\mathscr{X}_s$ into locally closed subsets. We
endow all $E_J$ and $E_J^o$ with their reduced induced structures.
 The
schemes $E_J$ and $E_J^o$ are regular, by Lemma
\ref{lemm-sncdequiv}. For every $i\in I$, we write $E_i^o$ instead
of $E^o_{\{i\}}$.

\begin{lemma}\label{lemma-cov}
For every non-empty subset $J$ of $I$, there exist integral affine
open subschemes $\mathscr{U}_1,\ldots,\mathscr{U}_r$ of
$\mathscr{X}$ such that
\begin{itemize}
\item $E_J^o$ is contained in $\mathscr{U}=\cup_{i=1}^r
\mathscr{U}_i$, \item on each open subscheme $\mathscr{U}_i$, we
can write $\pi=u_i (v_i)^{N'_J}$ with $u_i,\,v_i$ regular
functions on $\mathscr{U}_i$ such that $u_i$ a unit.
\end{itemize}
\end{lemma}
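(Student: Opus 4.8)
The plan is to build the $\mathscr{U}_i$ by spreading out a computation in the local rings at points of $E_J^o$ and then extracting a finite subcover.

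First I would fix a point $x\in E_J^o$ and examine $\mathcal{O}_{\mathscr{X},x}$. Since $E_J^o=E_J\setminus\bigcup_{i\in I\setminus J}E_i$, the irreducible components of $\mathscr{X}_s$ passing through $x$ are precisely the $E_j$ with $j\in J$. As $\mathscr{X}$ is an $sncd$-model, we may write $\pi=u\prod_{j\in J}x_j^{N_j}$ in $\mathcal{O}_{\mathscr{X},x}$, where $u$ is a unit, $x_j$ is a local equation for $E_j$ at $x$, and the $x_j$ form part of a regular system of parameters; the exponents are the $N_j$ because $N_j$ is by definition the multiplicity of $E_j$ in the divisor $\mathscr{X}_s=\operatorname{div}(\pi)$, and by Lemma \ref{lemm-sncdequiv} the non-associated prime factors of $\pi$ in $\mathcal{O}_{\mathscr{X},x}$ are exactly the equations of the $E_j$, $j\in J$.

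Next comes the only arithmetic point. By definition $N'_j$ is a divisor of $N_j$ (and $N'_j=N_j$ when $p=0$), and $N'_J=\gcd\{N'_j\mid j\in J\}$ divides each $N'_j$; hence $N'_J\mid N_j$ for every $j\in J$. Writing $N_j=N'_J\,m_j$ with integers $m_j\geq 1$ and putting $v=\prod_{j\in J}x_j^{m_j}\in\mathcal{O}_{\mathscr{X},x}$, the relation above becomes $\pi=u\,v^{N'_J}$ in $\mathcal{O}_{\mathscr{X},x}$. Note that it is only the divisibility $N'_J\mid N_j$ that is used here, not the prime-to-$p$ nature of $N'_J$.

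Finally I would globalize. The elements $u,v$ and the identity $\pi=u\,v^{N'_J}$ already live on some affine open neighbourhood of $x$; shrinking to a basic open on which $u$ is invertible, and --- using that $\mathscr{X}$ is regular and Noetherian, so its irreducible components are open --- to one contained in the irreducible component through $x$, we obtain an integral affine open $\mathscr{U}_x\ni x$ carrying such an expression. The family $\{\mathscr{U}_x\}_{x\in E_J^o}$ covers $E_J^o$; since $\mathscr{X}$ is of finite type over the Noetherian ring $R$, the space $E_J^o$ is Noetherian, hence quasi-compact, so finitely many $\mathscr{U}_{x_1},\dots,\mathscr{U}_{x_r}$ already cover $E_J^o$, and these are the required $\mathscr{U}_1,\dots,\mathscr{U}_r$. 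I do not anticipate a genuine obstacle; the only points needing a little care are the identification of the prime factors of $\pi$ at $x\in E_J^o$ with the equations of the $E_j$, $j\in J$, and the elementary divisibility $N'_J\mid N_j$.
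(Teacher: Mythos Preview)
Your proposal is correct and follows essentially the same route as the paper: work locally at a point of $E_J^o$, use the $sncd$ structure to write $\pi=u\prod_{j\in J}x_j^{N_j}$, invoke the divisibility $N'_J\mid N_j$ to set $v=\prod_{j\in J}x_j^{N_j/N'_J}$, spread out to an integral affine open, and extract a finite subcover by quasi-compactness. The only cosmetic difference is that the paper restricts to closed points of $E_J^o$, which is harmless.
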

\begin{proof}
Let $x$ be a closed point of $E_J^o$. Since $\mathscr{X}$ is an
$sncd$-model, we can find a regular system of parameters
$(x_1,\ldots,x_m)$ and a unit $u$ in $\mathcal{O}_{\mathscr{X},x}$
such that
$$\pi=u\prod_{j=1}^m(x_j)^{M_j}$$ for some $M_1,\ldots,M_m$ in
$\N$. Permuting the parameters $x_j$, we may assume that there
exists an $n\in \{1,\ldots,m\}$ such that $M_j>0$ for
$j=1,\ldots,n$ and $M_j=0$ for $j>n$. The irreducible components
of $\mathscr{X}_s$ that pass through $x$ are the components $E_i$
with $i\in J$, and they are locally defined by the equations
$x_j=0$, for $j=1,\ldots,n$. This correspondence yields a
bijection between the set $J$ and the set $\{1,\ldots,n\}$. Modulo
this identification, we have $N_j=M_j$ for every $j\in J$.

The elements $u$ and $x_1,\ldots,x_m$ are germs of regular
functions on $\mathscr{X}$, and we choose an affine integral open
neighbourhood $\mathscr{V}$ of $x$ in $\mathscr{X}$ such that
 $u$ and $x_1,\ldots,x_m$ are all defined on $\mathscr{V}$ and $u$ is a unit in $\mathcal{O}(\mathscr{V})$. Then we have the
 equation
 $$\pi=u\prod_{j\in J}(x_j)^{N_j}$$ in $\mathcal{O}(\mathscr{V})$. Writing
 $$v= \prod_{j\in J}(x_j)^{N_j/N'_J},$$ we obtain the equation
 $\pi=u v^{N'_J}$. Therefore, $E_J^o$ can be covered by finitely many open subschemes
 $\mathscr{U}_i$ of $\mathscr{X}$ as in the statement of the lemma.
\end{proof}

We  keep the notations of Lemma \ref{lemma-cov}. We write
$$\mathscr{U}_i=\Spec A_i$$ for $i=1,\ldots,r$, and we define a
finite \'etale covering of $\Spec A_i$ by
\begin{equation}\label{eq-localcover}\mathscr{V}_i=\Spec A_i[t_i]/((t_i)^{N'_J}-u_i)\rightarrow
\mathscr{U}_i.\end{equation} These coverings glue to a finite
\'etale covering
\begin{equation}\label{eq-covering}\widetilde{\mathscr{U}}\rightarrow \mathscr{U}\end{equation} of degree $N'_J$, the gluing
data being given by $t_i=v_j t_j/v_i$ over
$\mathscr{U}_{ij}:=\mathscr{U}_i\cap \mathscr{U}_j$ (note that
$v_j /v_i$ is regular
 on $\mathscr{V}_j\times_{\mathscr{U}_j} \mathscr{U}_{ij}$, because this scheme is
normal, and $(v_j /v_i)^{N'_J}=u_i/u_j\in
\mathcal{O}(\mathscr{U}_{ij})$).
 We put
$$\widetilde{E}^o_J=\widetilde{\mathscr{U}}\times_\mathscr{U} E_J^o.$$
This is a finite \'etale covering of $E_J^o$ of degree $N'_J$. Up
to $E_J^o$-isomorphism, it is independent of the choices of
$\mathscr{U}_i$, $u_i$ and $v_i$. In fact, we have the following
alternative construction.

\begin{prop}\label{prop-normal}
Let $J$ be a non-empty subset of $I$, and denote by $\mathscr{Y}$
the normalization of
$$\mathscr{X}\times_{R}(R[s]/(s^{N'_J}-\pi)).$$
 Then
 $\widetilde{E}_J^o$ and $\mathscr{Y}\times_{\mathscr{X}}E_J^o$ are isomorphic as
 $E_J^o$-schemes.
\end{prop}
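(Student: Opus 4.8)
The plan is to compare the two constructions of a degree-$N'_J$ étale cover of $E_J^o$ by working locally on the open cover $\mathscr{U}=\cup_{i=1}^r\mathscr{U}_i$ of a neighbourhood of $E_J^o$ provided by Lemma \ref{lemma-cov}. On each $\mathscr{U}_i=\Spec A_i$ we have the equation $\pi=u_i v_i^{N'_J}$ with $u_i$ a unit, and the cover $\widetilde{\mathscr{U}}$ is glued from $\mathscr{V}_i=\Spec A_i[t_i]/(t_i^{N'_J}-u_i)$. On the other side, let $B=R[s]/(s^{N'_J}-\pi)$ and let $\mathscr{Y}$ be the normalization of $\mathscr{X}\times_R B$; I will first show that over $\mathscr{U}_i$ the base change $(\mathscr{X}\times_R B)\times_{\mathscr{X}}\mathscr{U}_i=\Spec A_i[s]/(s^{N'_J}-\pi)$ becomes, after adjoining $t_i$ with $t_i^{N'_J}=u_i$, the spectrum of $A_i[t_i]/(t_i^{N'_J}-u_i)$: indeed $s/(v_i t_i)$ is a root of $X^{N'_J}-1$ after we note $s^{N'_J}=\pi=u_i v_i^{N'_J}=(v_i t_i)^{N'_J}$, so over the étale $A_i$-algebra $\mathscr{V}_i$ the element $s$ differs from $v_i t_i$ by a root of unity, and $\mathscr{V}_i$ is finite étale over $\mathscr{U}_i$, hence normal over the regular (in particular normal) scheme $\mathscr{U}_i$.

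The key step is then the following: the normalization of $\Spec A_i[s]/(s^{N'_J}-\pi)$ is identified, after the faithfully flat étale base change $\mathscr{V}_i\to\mathscr{U}_i$, with a disjoint union of copies of $\mathscr{V}_i$ indexed by $\mu_{N'_J}$ (one for each value of the root of unity $s/(v_it_i)$); equivalently, over each $\mathscr{U}_i$ the restriction of $\mathscr{Y}$ is the étale $A_i$-algebra that one gets by adjoining a single $N'_J$-th root of $u_i$ — this is exactly $\mathscr{V}_i$ up to the same indeterminacy of choice of root. Concretely: normalization commutes with étale base change (\cite[4.1.25]{liu} or an analogous reference), and $A_i[s]/(s^{N'_J}-\pi)\otimes_{A_i}\mathscr{V}_i=A_i[t_i]/(t_i^{N'_J}-u_i)\otimes_{A_i}\mathscr{V}_i$ is étale, hence normal, over $\mathscr{V}_i$, so the normalization of $\Spec A_i[s]/(s^{N'_J}-\pi)$ pulled back to $\mathscr{V}_i$ is just $\mathscr{V}_i\times_{\mathscr{U}_i}\mathscr{V}_i$; descending this isomorphism along $\mathscr{V}_i\to\mathscr{U}_i$ shows $\mathscr{Y}\times_{\mathscr{X}}\mathscr{U}_i\cong\mathscr{V}_i$ as $\mathscr{U}_i$-schemes (the descent datum matching the one that defines $\widetilde{\mathscr{U}}$). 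I then restrict everything along $E_J^o\hookrightarrow\mathscr{U}$: on $E_J^o\cap\mathscr{U}_i$ the function $\pi$ vanishes, and the structure morphism $\mathscr{Y}\times_{\mathscr{X}}E_J^o\to E_J^o$ is finite étale of degree $N'_J$ because étaleness is preserved under the base change $\mathscr{U}_i\hookrightarrow$ and then $E_J^o\hookrightarrow$.

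Finally I have to check that the local isomorphisms $\mathscr{Y}\times_{\mathscr{X}}\mathscr{U}_i\cong\widetilde{\mathscr{U}}|_{\mathscr{U}_i}=\mathscr{V}_i$ glue, i.e.\ that they are compatible over $\mathscr{U}_{ij}$ with the gluing data $t_i=v_jt_j/v_i$; this is immediate from the identities $s=v_it_i=v_jt_j$ (up to a common root of unity, which one fixes once and for all over a connected base, or tracks as part of the descent datum) that already hold in $\mathscr{Y}$ over $\mathscr{U}_{ij}$ since $s$ is a global function there. Restricting the glued isomorphism $\mathscr{Y}\times_{\mathscr{X}}\mathscr{U}\cong\widetilde{\mathscr{U}}$ to $E_J^o$ gives the desired $E_J^o$-isomorphism $\mathscr{Y}\times_{\mathscr{X}}E_J^o\cong\widetilde{E}_J^o$. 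The main obstacle I anticipate is the bookkeeping with the $\mu_{N'_J}$-torsor structure — making sure that "adjoining a single root of $u_i$" really reproduces $\widetilde{E}_J^o$ rather than a twist of it — but since $\widetilde{E}_J^o$ was already shown in the text to be independent of the choices of $\mathscr{U}_i,u_i,v_i$, it suffices to exhibit one consistent choice on each $\mathscr{U}_i$, and the identity $s^{N'_J}=(v_it_i)^{N'_J}$ forces the matching; there is no genuine descent obstruction because $\mathscr{V}_i\to\mathscr{U}_i$ is the very cover being constructed.
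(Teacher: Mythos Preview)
Your central claim that
\[
A_i[s]/(s^{N'_J}-\pi)\otimes_{A_i}\mathscr{V}_i \;=\; A_i[t_i]/(t_i^{N'_J}-u_i)\otimes_{A_i}\mathscr{V}_i
\]
is false. The right-hand side is $\mathscr{V}_i\times_{\mathscr{U}_i}\mathscr{V}_i$, which is \'etale over $\mathscr{V}_i$ and hence normal. The left-hand side is $C_i[s]/(s^{N'_J}-(v_it_i)^{N'_J})$ with $C_i=\mathcal{O}(\mathscr{V}_i)$, and this is \emph{not} \'etale over $C_i$ because $v_it_i$ is not a unit: it vanishes along $E_J^o$. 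Over a closed point of $E_J^o\times_{\mathscr{U}_i}\mathscr{V}_i$ the fibre of the left-hand side is $k[s]/(s^{N'_J})$, which is non-reduced. Your justification (``$s/(v_it_i)$ is a root of $X^{N'_J}-1$'') only makes sense on the generic fibre, where $v_i$ is invertible; it does not give an isomorphism of $\mathscr{V}_i$-algebras. So the very step where you hoped to avoid computing a normalization is the step that fails. Even if you repair this by computing the normalization of $C_i[s]/(s^{N'_J}-w^{N'_J})$ directly (it is indeed $\prod_{\zeta\in\mu_{N'_J}} C_i$, via $s\mapsto \zeta w$, because $C_i$ is regular and $w$ is a non-zero-divisor), your subsequent descent step remains only a sketch: an isomorphism after faithfully flat base change does not by itself yield an isomorphism over $\mathscr{U}_i$ without verifying compatibility of descent data, and you have not done that.

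The paper's argument avoids both difficulties by constructing a direct global map. Since normalization commutes with open immersions one may assume $\mathscr{U}=\mathscr{X}$. The local elements $t_iv_i\in\mathcal{O}(\mathscr{V}_i)$ glue, precisely by the rule $t_i=v_jt_j/v_i$, to a global regular function $w$ on $\widetilde{\mathscr{U}}$ satisfying $w^{N'_J}=\pi$. This gives a morphism $g:\widetilde{\mathscr{U}}\to\mathscr{X}\times_R R'$ (with $R'=R[s]/(s^{N'_J}-\pi)$), and since $\widetilde{\mathscr{U}}$ is \'etale over the regular scheme $\mathscr{U}$ it is normal, so $g$ factors through $h:\widetilde{\mathscr{U}}\to\mathscr{Y}$. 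On generic fibres $h$ is an isomorphism (there $v_i$ is a unit, so the local description \eqref{eq-localcover} shows $h_{K'}$ is an isomorphism), hence $h$ is birational; and $h$ is finite because $\widetilde{\mathscr{U}}\to\mathscr{U}$ is. A finite birational morphism to a normal scheme is an isomorphism. No \'etale base change or descent is needed.
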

\begin{proof}
We set $R'=R[s]/(s^{N'_J}-\pi)$ and
$\mathscr{X}'=\mathscr{X}\times_R R'$. We denote by $K'$ the
quotient field of $R'$. It is a finite separable extension of $K$.
The morphism $$\mathscr{Y}\times_{R'} K'\to
\mathscr{X}'\times_{R'} K'\cong \mathscr{X}\times_R K'$$ is an
isomorphism, because $\mathscr{X}\times_R K$ is regular so that
$\mathscr{X}\times_R K'$ is regular \cite[6.7.4]{ega4.2}, and thus
normal.

 It is enough to show that, in the notation
of \eqref{eq-covering}, $\widetilde{\mathscr{U}}$ is isomorphic to
$\mathscr{Y}\times_\mathscr{X} \mathscr{U}$ as a
$\mathscr{U}$-scheme. Since normalization commutes with open
immersions, we may assume that $\mathscr{U}=\mathscr{X}$.

The scheme $\widetilde{\mathscr{U}}$ is regular and $R$-flat,
because $\widetilde{\mathscr{U}}\to \mathscr{U}$ is \'etale and
$\mathscr{U}$ is regular and $R$-flat. In particular,
$\widetilde{\mathscr{U}}$ is normal. The elements $t_iv_i\in
\mathcal{O}(\mathscr{V}_i)$ glue to a regular function $w$ on
$\widetilde{\mathscr{U}}$. We have $w^{N'_J}=\pi$ on
$\widetilde{\mathscr{U}}$ because this holds on every open
$\mathscr{V}_i$. There is a unique morphism of
$\mathscr{X}$-schemes
$$g:\widetilde{\mathscr{U}}\to \mathscr{X}'$$
such that $s\circ g=w$, and it factors uniquely through a morphism
$$h:\widetilde{\mathscr{U}}\to \mathscr{Y}$$ because $\widetilde{\mathscr{U}}$ is
normal. One sees from the local description in
\eqref{eq-localcover} that the induced morphism
$$h_{K'}:\widetilde{\mathscr{U}}\times_{R'} K'\to \mathscr{Y}\times_{R'} K'\cong \mathscr{X}\times_R K'$$ is an
isomorphism, since $\pi=u_i v_i^{N'_J}$ and $v_i$ is a unit on
$\mathscr{V}_i\times_R K$ for every $i$ in $\{1,\ldots,r\}$. Thus
$h$ is birational, because $\widetilde{\mathscr{U}}$ and
$\mathscr{Y}$ are $R$-flat. Moreover, $\widetilde{\mathscr{U}}\to
\mathscr{U}$ is finite, so that $h$ is finite. Since $\mathscr{Y}$
is normal, we can conclude by \cite[4.4.9]{ega3.1} that $h$ is an
isomorphism.
\end{proof}
\subsection{Tame nearby cycles}\label{subsec-tamecons}
Let $\mathscr{Y}$ be a separated $R$-scheme of finite type. Let
$\Lambda$ be either $\Q_\ell$, or $\Z_\ell$, or a Noetherian
torsion ring that is killed by an element of $\N'$. We denote by
$D^b_c(\mathscr{Y}_s,\Lambda)$ the bounded derived category of
 constructible sheaves of $\Lambda$-modules on $\mathscr{Y}_s$. If $\Lambda$ is a
 torsion ring this is simply the full subcategory of the derived
 category of \'etale sheaves of $\Lambda$-modules on
 $\mathscr{Y}_s$ consisting of complexes with bounded and
 constructible cohomology. If $\Lambda$ is $\Q_\ell$ or $\Z_\ell$
 the definition is more delicate; see \cite[1.1.2]{deligne-weilII}
 (note that the finiteness conditions in c) en d) of \cite[1.1.2]{deligne-weilII} are fulfilled, since $\mathscr{Y}_s$ is of finite type
 over the algebraically closed field $k$).

We denote by
$$R\psi_{\mathscr{Y}}(\Lambda)\mbox{ and }R\psi^t_{\mathscr{Y}}(\Lambda) \in D^b_c(\mathscr{Y}_s,\Lambda)$$ the complex of nearby
cycles, resp. tame nearby cycles, with coefficients in $\Lambda$
associated to $\mathscr{Y}$. If $\Lambda$ is torsion, these
objects were defined in \cite[Exp.\,I]{sga7a} and
\cite[Exp.\,XIII]{sga7b}, and the fact that
$R\psi_{\mathscr{Y}}(\Lambda)$ is constructible  was proven in
\cite[Th.\,finitude(3.2)]{sga4.5}.  It follows that
$R\psi^t_{\mathscr{Y}}(\Lambda)$ is constructible, because
\begin{equation}\label{eq-invar}
R^i\psi^t_{\mathscr{Y}}(\Lambda)\cong
(R^i\psi_{\mathscr{Y}}(\Lambda))^P\end{equation} for every $i$ in
$\N$ \cite[I.2.7.2]{sga7a}.

For every integer $n>0$, the object
$R\psi_{\mathscr{Y}}(\Z/\ell^n)$ has finite Tor-dimension, and it
is compatible with reduction of the coefficients modulo powers of
$\ell$ \cite[D.8]{kiehl-weissauer}. Thus we can define the object
$R\psi_{\mathscr{Y}}(\Lambda)$ in $D^b_c(\mathscr{Y}_s,\Lambda)$
 when $\Lambda=\Z_\ell$ or $\Lambda=\Q_{\ell}$ by passing to the limit; see
 \cite[1.1.2(c)]{deligne-weilII} and
 \cite[p.\,354]{kiehl-weissauer}.

  Let $M$ be a $(\Z/\ell^n)$-module with
 continuous $P$-action.
 Since $P$ is a pro-$p$-group and $p$
 is different from $\ell$, the module $M^P$ is a direct summand of
 $M$.
 It is split off by the averaging map $$M\to M^P:m\mapsto \frac{1}{|P/P_m|}\sum_{g\in
 P/P_m}g\cdot m$$ where $P_m$ denotes the stabilizer of $m$, which is an open subgroup of $P$ and thus of finite index.
 It follows that
 $$(M\otimes_{\Z/\ell^n}\Z/\ell^m)^P\cong
 M^P\otimes_{\Z/\ell^n}\Z/\ell^m$$ for all integers $n\geq m>0$
 and that the functor $(\cdot)^P$ is exact on the category of
 $(\Z/\ell^n)$-modules with continuous $P$-action.

Using these properties, we deduce from \eqref{eq-invar}
 that $R\psi^t_{\mathscr{Y}}(\Z/\ell^n)$ has finite Tor-dimension
 for every integer $n>0$ and
 that $R\psi^t_{\mathscr{Y}}$ is compatible with reduction of the coefficients modulo powers of
$\ell$. Thus, we can define $R\psi^t_{\mathscr{Y}}(\Lambda)$ in
$D^b_c(\mathscr{Y}_s,\Lambda)$
 when $\Lambda=\Z_\ell$ or $\Lambda=\Q_{\ell}$ by passing to the limit, and we still have
an isomorphism \eqref{eq-invar} in those cases.

\subsection{Tame nearby cycles on divisors with strict normal
crossings} We keep the notations of Section \ref{subsec-tamecons}.
 The following lemma and proposition constitute the key
technical result of this section. The proofs were suggested to me
by L. Illusie and T. Saito.

\begin{lemma}\label{lemm-tamenearby}
Let $\mathscr{Y}$ be a regular flat separated $R$-scheme of finite
type, of pure dimension $n$.  Consider an integer $q$ in
$\{1,\ldots,n\}$ and a tuple $(M_1,\ldots,M_q)$ in $(\Z_{>0})^q$.
For each $i\in \{1,\ldots,q\}$, we denote by $M'_i$ the largest
divisor of $M_i$ that is not divisible by $p$. We put
$$\mu=\gcd\{M'_i\,|\,i\in \{1,\ldots,q\}\,\}.$$

Let $y$ be a closed point of $\mathscr{Y}_s$. Assume that there
exist a regular system of parameters $(y_1,\ldots,y_n)$ and a unit
$v$ in $\mathcal{O}_{\mathscr{Y},y}$ such that
\begin{equation}\label{eq-mu}
\pi=v^{\mu}\prod_{i=1}^q (y_i)^{M_i}.\end{equation} Then there
exists an integral affine open neighbourhood
$$\mathscr{U}=\Spec B$$ of $y$ in $\mathscr{Y}$ such that
$y_1,\ldots,y_q$ are regular functions on $\mathscr{U}$ and such
that for each $m\in \N$, the sheaf
$$R^m\psi^t_{\mathscr{Y}}(\Lambda)$$ is constant on the subscheme
$$U=\Spec (B/(y_1,\ldots,y_q))$$ of $\mathscr{Y}_s$.
\end{lemma}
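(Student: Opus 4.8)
The plan is to reduce $\mathscr{Y}$ to a small affine neighbourhood of $y$ on which the equation \eqref{eq-mu} holds globally, to apply Grothendieck's computation of the tame nearby cycles on a divisor with strict normal crossings, and then to use that the unit $v^{\mu}$ in \eqref{eq-mu} is a $\mu$-th power in order to trivialize the monodromy that occurs. Since $R^i\psi^t$ is local on the special fiber and, by Section~\ref{subsec-tamecons}, compatible with reduction of the coefficients modulo powers of $\ell$, I would first reduce to the case where $\Lambda$ is a torsion ring killed by an element of $\N'$, the cases $\Lambda=\Z_\ell,\Q_\ell$ following afterwards by passing to the limit. Next I would shrink $\mathscr{Y}$ to an integral affine open neighbourhood $\mathscr{U}=\Spec B$ of $y$ on which $y_1,\dots,y_q$ and $v$ are defined, $v$ is a unit, and \eqref{eq-mu} holds. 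As $v^{\mu}$ is a unit and $(y_1,\dots,y_q)$ is part of a regular system of parameters, $\mathscr{U}_s$ has strict normal crossings at $y$ in the sense of \eqref{eq-sncd}, so Corollary~\ref{cor-localprop} allows me to shrink further so that $\mathscr{U}$ is an $sncd$-model over $R$ whose only irreducible components are the $E_i=\{y_i=0\}$ ($i=1,\dots,q$), with multiplicity $M_i$ in $\mathscr{U}_s$, and so that $E_J^o=E_1\cap\cdots\cap E_q$ — where $J=\{1,\dots,q\}$, so that $N'_J=\mu$ in the notation of Section~\ref{subsec-sncd} — is connected, hence integral, being regular by Lemma~\ref{lemm-sncdequiv}. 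Since \'etale sheaves do not see nilpotent elements and $E_J^o$ is the reduced subscheme underlying $\Spec(B/(y_1,\dots,y_q))$, it then suffices to prove that $R^m\psi^t_{\mathscr{U}}(\Lambda)$ is constant on $E_J^o$ for every $m$.

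The second step is to describe the local structure of $R\psi^t_{\mathscr{U}}(\Lambda)$ along the stratum $E_J^o$. Here I would take Grothendieck's description of the stalks of the tame nearby cycles on a divisor with strict normal crossings \cite[I.3.3]{sga7a} as the starting point and upgrade it to the sheaf-theoretic statement that, for each $m$, there is an isomorphism $R^m\psi^t_{\mathscr{U}}(\Lambda)|_{E_J^o}\cong f_*\mathcal{C}_m$, where $f\colon\widetilde{E}_J^o\to E_J^o$ is the degree-$\mu$ Kummer covering of \eqref{eq-covering} (concretely, $f$ is the covering along which $\mathscr{U}$ acquires, after base change to $R[s]/(s^{\mu}-\pi)$ and normalization, an $sncd$-model with all special-fiber multiplicities divided by $\mu$), and $\mathcal{C}_m$ is a constant sheaf of finitely generated free $\Lambda$-modules on $\widetilde{E}_J^o$ (up to a Tate twist, which is immaterial since $k$ is algebraically closed). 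The point is that the only ingredient in the local form of $R\psi^t$ over $E_J^o$ which is a priori non-constant is the ``$\pi_0$ of the tame Milnor fiber'' factor, and that this factor is precisely the torsor $\widetilde{E}_J^o$, namely the Kummer torsor attached to the unit appearing in the local equations for $\pi$. I expect this upgrade to be the main obstacle; it requires descending to strict henselizations, making the isomorphism of \cite[I.3.3]{sga7a} explicit and compatible with the gluing data of \eqref{eq-localcover}--\eqref{eq-covering}, and tracking carefully the role of the particular unit $v^{\mu}$ in \eqref{eq-mu} rather than the general unit permitted in \eqref{eq-sncd}. This is the argument that Illusie and Saito suggested.

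The third step is to check that $f$ is the trivial covering, and here the hypothesis on $v^{\mu}$ enters. Since $\mu=\gcd\{M'_1,\dots,M'_q\}$ divides each $M'_j$, and $M'_j$ divides $M_j$, the integer $\mu$ divides every $M_j$, so $w:=v\prod_{j=1}^{q}y_j^{M_j/\mu}$ is a regular function on $\mathscr{U}$ with $w^{\mu}=\pi$ by \eqref{eq-mu}. By Proposition~\ref{prop-normal}, $\widetilde{E}_J^o$ is $E_J^o$-isomorphic to $\mathscr{Y}'\times_{\mathscr{U}}E_J^o$, where $\mathscr{Y}'$ is the normalization of
\[\mathscr{U}\times_R R[s]/(s^{\mu}-\pi)=\Spec B[s]/(s^{\mu}-w^{\mu}).\]
Since $R$ is henselian with algebraically closed residue field and $p\nmid\mu$, the $\mu$-th roots of unity in $R$ form a set of $\mu$ distinct elements, so $s^{\mu}-w^{\mu}=\prod_{\zeta}(s-\zeta w)$ in $B[s]$, the product running over them; as $B$ is a normal domain and $w\neq 0$, the ring $B[s]/(s^{\mu}-w^{\mu})$ is reduced, its minimal primes being the kernels of the $\mu$ distinct surjections $s\mapsto\zeta w$ onto $B$. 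Hence $\Spec B[s]/(s^{\mu}-w^{\mu})$ has $\mu$ irreducible components, each isomorphic to $\Spec B$ and therefore already normal, so $\mathscr{Y}'\cong\coprod_{\zeta}\Spec B$ and $f\colon\widetilde{E}_J^o\cong\coprod_{\zeta}E_J^o\to E_J^o$ is trivial.

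Combining the second and third steps, $R^m\psi^t_{\mathscr{U}}(\Lambda)|_{E_J^o}\cong f_*\mathcal{C}_m$ is a constant sheaf — a direct sum of $\mu$ copies of the constant sheaf underlying $\mathcal{C}_m$ — for every $m$, which proves the lemma when $\Lambda$ is torsion; the cases $\Lambda=\Z_\ell$ and $\Lambda=\Q_\ell$ then follow by passing to the limit over $\Z/\ell^n$, as in Section~\ref{subsec-tamecons}. Apart from the sheaf-theoretic refinement of \cite[I.3.3]{sga7a} in the second step, all of the steps are elementary, and the decisive point is the triviality of the torsor $\widetilde{E}_J^o$, which is forced by the shape of the hypothesis \eqref{eq-mu}.
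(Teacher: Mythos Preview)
Your outline is coherent, and step~3 (triviality of the Kummer torsor $\widetilde{E}_J^o$ via the explicit $\mu$-th root $w=v\prod_j y_j^{M_j/\mu}$ of $\pi$, together with Proposition~\ref{prop-normal}) is correct. But your route is genuinely different from the paper's, and the argument you attribute to Illusie and Saito in step~2 is not what they suggested. The paper instead constructs an explicit $R$-morphism
\[
f:\mathscr{Y}\longrightarrow\mathscr{Z}=\Spec R[z_1,\dots,z_n]/\bigl(\pi-\textstyle\prod_{i\le q}z_i^{M'_i}\bigr),\qquad z_i\mapsto v^{\alpha_i}y_i^{e_i}\ (i\le q),\quad z_i\mapsto y_i\ (i>q),
\]
where $M_i=e_iM'_i$ and $\sum_i\alpha_iM'_i=\mu$, and proves that the base-change map $\theta:f_s^*R^m\psi^t_{\mathscr{Z}}(\Lambda)\to R^m\psi^t_{\mathscr{Y}}(\Lambda)$ is an isomorphism by checking on stalks: one builds the pro-Kummer towers $\widetilde{Y},\widetilde{Z}$ of \cite[I.3.3]{sga7a} and exhibits an explicit isomorphism $\widetilde{Y}\cong\widetilde{Z}\times_Z Y$ of procoverings. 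Constancy on $U$ then comes for free by smooth base change, since $\mathscr{Z}$ is smooth over the $q$-variable model $\mathscr{Z}'$ and $f(U)$ lies over a single point of $\mathscr{Z}'_s$.

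Your step~2, by contrast, asserts the sheaf-theoretic identification $R^m\psi^t|_{E_J^o}\cong f_*\mathcal{C}_m$ for an arbitrary $sncd$-model. That statement is essentially Proposition~\ref{prop-tamenearby}, which in the paper's logic is \emph{deduced from} Lemma~\ref{lemm-tamenearby} (by passing to the \'etale cover on which the unit becomes an $N'_J$-th power and then applying the lemma). So your plan inverts the paper's logical order: you would need an independent proof of Proposition~\ref{prop-tamenearby} first, after which the lemma is indeed a quick corollary via your step~3. The sketch you give for step~2 (``descend to strict henselizations and make \cite[I.3.3]{sga7a} compatible with the gluing data'') is not carried out, and it is not clear it is any easier than the paper's comparison argument; the difficulty is precisely that the stalkwise computation of \cite[I.3.3]{sga7a} does not by itself determine the sheaf. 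If you can complete step~2 directly you get a more conceptual proof that yields Proposition~\ref{prop-tamenearby} in one stroke; the paper's approach is more economical because it reduces the global question to a stalkwise verification of a single specific map~$\theta$.
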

\begin{proof}
It is enough to consider the case where $\Lambda$ is torsion. For
each $i\in \{1,\ldots,q\}$, we write
$$M_i=e_iM'_i$$ with $e_i\in \N$. If $p=0$ then all $e_i$ are equal to
one; if $p>1$ then all $e_i$ are powers of $p$.

Shrinking $\mathscr{Y}$, we may assume that
 $\mathscr{Y}$ is integral and affine, say,
$\mathscr{Y}=\Spec B$, and that $v$ and $y_1,\ldots,y_q$ are
regular functions on $\mathscr{Y}$, with $v$ a unit in $B$. Then
the equation \eqref{eq-mu} holds in $B$. We may also assume that
$\mathscr{Y}$ is an $sncd$-model, by Corollary
\ref{cor-localprop}, and that $y_i$ is a prime element in $B$ for
$i=1,\ldots,q$. We put
$$U=\Spec (B/(y_1,\ldots,y_q)).$$

 By B\'ezout's theorem, there
exist integers $\alpha_1,\ldots,\alpha_q$ such that
$$\mu=\sum_{i=1}^q \alpha_i M'_i.$$
 We put
$$\mathscr{Z}=\Spec R[z_1,\ldots,z_n]/(\pi-\prod_{i=1}^q
(z_i)^{M'_i})$$ and we consider the morphism
$f:\mathscr{Y}\rightarrow \mathscr{Z}$ defined by
\begin{eqnarray*}
 z_i&\mapsto & v^{\alpha_i}(y_i)^{e_i} \mbox{ for }i=1,\ldots,q,
\\ z_i&\mapsto & y_i \mbox{ for }i=q+1,\ldots,n.
\end{eqnarray*}
Then $f$ maps $y\in \mathscr{Y}_s$ to the origin in
$\mathscr{Z}_s$, and $f(U)$ is contained in the closed subscheme
$$V=\Spec k[z_{q+1},\ldots,z_n]$$
of $\mathscr{Z}_s$.

We denote by $\theta$ the base change morphism
\begin{equation}\label{eq-theta}
\theta: f_s^*R^m\psi^t_{\mathscr{Z}}(\Lambda)\rightarrow
R^m\psi^t_{\mathscr{Y}}(\Lambda)\end{equation} of
$\Lambda$-sheaves on $\mathscr{Y}_s$
\cite[XIII.2.1.7.2]{sga7b}. 
 We claim that  $\theta$ is an
isomorphism.
 Assuming this for now, it suffices to prove that
$R\psi^t_{\mathscr{Z}}(\Lambda)$ is constant on $V$. Consider the
morphism
$$g:\mathscr{Z}\rightarrow \mathscr{Z}'=\Spec
R[z'_1,\ldots,z'_q]/(\pi-\prod_{i=1}^q (z'_i)^{M'_i})$$ defined by
$$z'_i\mapsto z_i\mbox{ for }i=1,\ldots,q.$$
The morphism $g$ is smooth. By smooth base change, we have
$$R^m\psi^t_{\mathscr{Z}}(\Lambda)\cong
g_s^*R^m\psi^t_{\mathscr{Z}'}(\Lambda)$$ for each $m\in \N$
\cite[XIII.2.1.7.2]{sga7b}. Thus the restriction of
$R^m\psi^t_{\mathscr{Z}}(\Lambda)$ to $V=g_s^{-1}(0)$ is constant.

It remains to prove our claim. We'll use the local computations in
\cite[I.3.3]{sga7a} of the tame nearby cycles on a divisor with
strict normal crossings (these computations assume a purity
property that was later proven by Gabber \cite{gabber}).
 Let
$\overline{a}$ be a geometric point of $\mathscr{Y}_s$ and denote
by $\overline{b}$ its image $f\circ \overline{a}$ in
$\mathscr{Z}_s$. It is enough to show that, for all integers
$m\geq 0$, the morphism
\begin{equation}\label{eq-stalk}
R^m\psi^t_{\mathscr{Z}}(\Lambda)_{\overline{b}}\rightarrow
R^m\psi^t_{\mathscr{Y}}(\Lambda)_{\overline{a}}\end{equation}
obtained from \eqref{eq-theta} by passing to the stalks at
$\overline{a}$, is an isomorphism. We denote by
$\mathscr{Y}_{\overline{a}}$ and $\mathscr{Z}_{\overline{b}}$ the
strict localization of $\mathscr{Y}$ at $\overline{a}$, resp.
$\mathscr{Z}$ at $\overline{b}$, and we set
$Y=\mathscr{Y}_{\overline{a}}\times_R K$ and
$Z=\mathscr{Z}_{\overline{b}}\times_R K$. Then $f$ induces a
morphism of $K$-schemes $Y\to Z$ and we can identify
\eqref{eq-stalk} with the morphism
\begin{equation}\label{eq-strictloc}H^m(Z\times_K K^t,\Lambda)\to
H^m(Y\times_K K^t,\Lambda)\end{equation} (see
\cite[I.2.3]{sga7a}).

Denote by $I$ the subset of $\{1,\ldots,q\}$ consisting of indices
$i$ such that $y_i$ vanishes at $\overline{a}$, and set $\nu=\gcd
\{M'_i\,|\,i\in I\}$. We denote by $K'$ the unique degree $\nu$
extension of $K$ in $K^t$ (obtained by adding a $\nu$-th root of a
uniformizer) and by $R'$ the normalization of $R$ in $K'$.

Since every unit in $\mathcal{O}(\mathscr{Y}_{\overline{a}})$ is a
$\nu$-th power, the proof of Proposition \ref{prop-normal} shows
that the normalization of $\mathscr{Y}_{\overline{a}}\times_R R'$
is the disjoint union of $\nu$ copies of
$\mathscr{Y}_{\overline{a}}$, which are transitively permuted by
the Galois action of $G(K'/K)\cong \mu_{\nu}(k)$. The
$R'$-structure of such a copy $\mathscr{C}$ of
$\mathscr{Y}_{\overline{a}}$ is determined by the choice of a
$\nu$-th root of $\pi$ in
$\mathcal{O}(\mathscr{Y}_{\overline{a}})$. The special fiber of
the $R'$-scheme $\mathscr{C}$ is a divisor with strict normal
crossings with multiplicities $M_i/\nu$, $i\in I_{\overline{a}}$.
The generic fiber of the normalization of
$\mathscr{Y}_{\overline{a}}\times_R R'$ is canonically isomorphic
to $Y\times_K K'$.  A similar description holds for the
normalization of $\mathscr{Z}_{\overline{b}}\times_R R'$.
Therefore, by base change to $K'$, we may assume that $\nu=1$.

 Let us recall how, in the case $\nu=1$, the cohomology of $Y\times_K
 K^t$ and $Z\times_K K^t$ was computed in \cite[I.3.3]{sga7a}. For every element $d$
 of $\N'$, we set
\begin{eqnarray*}
Y_d&=&\Spec \left(\mathcal{O}(Y)[s_{d,i}\,|\,i\in
I]/((s_{d,i})^d-y_i)_{i\in I}\right),
\\[1.5ex]Z_d&=&\Spec
\left(\mathcal{O}(Z)[t_{d,i}\,|\,i\in I]/((t_{d,i})^d-z_i)_{i\in
I}\right).
\end{eqnarray*}
For $d=1$, we simply get $Y$ and $Z$. For every element $c$ of
$\N'$, we define a morphism of $Y$-schemes $Y_{cd}\to Y_d$ and a
morphism of $Z$-schemes $Z_{cd}\to Z_d$ by mapping $s_{d,i}$ to
$(s_{cd,i})^c$ and $t_{d,i}$ to $(t_{cd,i})^c$ for every $i$. All
these morphisms are finite and \'etale, because $c$ is not
divisible by $p$ and $s_{cd,i}$, $t_{cd,i}$ are units on $Y$,
resp. $Z$. In this way, we obtain projective systems of Galois
coverings of $Y$ and $Z$, and by passing to the limit, we get
procoverings $\widetilde{Y}\to Y$ and $\widetilde{Z}\to Z$.

Now the crucial point is the following. We choose a sequence of
elements $v_d$ in $\mathcal{O}(Y)$, for $d\in \N'$, such that
$v_1=v$ and such that $v_d=(v_{cd})^c$ for all $c$, $d$ in $\N'$.
This is possible because $v$ is a unit on the strictly henselian
local scheme $\mathscr{Y}_{\overline{a}}$ and the elements in
$\N'$ are not divisible by $p$. For every $d$ in $\N'$, the
morphism of $Y$-schemes
$$f_d:Y_d\to Z_d\times_Z Y:t_{d,i}\mapsto (v_d)^{\alpha_i}(s_{d,i})^{e_i}\mbox{ for all }i$$
is an isomorphism. We can construct its inverse as follows. Fix an
element $d$ in $\N'$. Since the exponents $e_i$ are either one
(for $p=0$) or powers of $p$ (for $p>0$), we know that $d$ is
prime to $e_i$ for every $i$ in $I$. We choose for every $i$ an
integer $\beta_i$ such that $d$ divides $e_i\beta_i-1$, and we
denote the quotient $(e_i\beta_i-1)/d$ by $\gamma_i$. Then the
$Y$-morphism
$$Z_d\times_Z Y\to Y_d:s_{d,i}\mapsto (v_d)^{-\alpha_i\beta_i}(y_i)^{-\gamma_i}(t_{d,i})^{\beta_i}\mbox{ for all }i$$
is inverse to $f_d$.

 The isomorphisms $f_d$
are compatible with the transition morphisms in the projective
systems $(Y_d)_{d\in \N'}$ and $(Z_d)_{d\in \N'}$, so that we
obtain by passing to the limit an isomorphism of procoverings
\begin{equation}\label{eq-procov}\widetilde{f}:\widetilde{Y}\to \widetilde{Z}\times_Z
Y.\end{equation}

The procovering $\widetilde{Z}\to Z$ factors through a morphism
$\widetilde{Z}\to Z\times_K K^t$ because we can repeatedly take
$d$-th roots of $\pi$ in $\mathcal{O}(\widetilde{Z})$ for all $d$
in $\N'$. Indeed, in $\mathcal{O}(\mathscr{Z}_{\overline{b}})$,
$\pi$ equals a unit times a monomial in the elements $z_i,\,i\in
I$, and we can always take the $d$-th root of a unit in
$\mathcal{O}(\mathscr{Z}_{\overline{b}})$ since
$\mathcal{O}(\mathscr{Z}_{\overline{b}})$ is strictly henselian
and $d$ is not divisible by $p$. Via the morphism $\widetilde{f}$
in  \eqref{eq-procov}, the $K^t$-structure on $\widetilde{Z}$
induces a $K^t$-structure on $\widetilde{Y}$ so that the
procovering $\widetilde{Y}\to Y$ factors through $\widetilde{Y}\to
Y\times_K K^t$.

By \cite[I.3.3.1]{sga7a}, the schemes $\widetilde{Y}$ and
$\widetilde{Z}$ have trivial cohomology, so that the $E_2$-terms
of the Hochschild-Serre spectral sequences associated to the
procoverings $\widetilde{Y}\to Y\times_K K^t$ and
$\widetilde{Z}\to Z\times_K K^t$ are concentrated in degrees
$(\ast,0)$ and we can use them to compute the cohomology of
$Y\times_K K^t$ and $Z\times_K K^t$ \cite[III.2.21(b)]{milne}.
 The isomorphism \eqref{eq-procov}
induces an isomorphism between these spectral sequences.
 It follows that \eqref{eq-strictloc} is an isomorphism for every
 integer $m\geq 0$.
\if false
 For every quasi-compact and quasi-separated $K$-scheme
$X$ and every integer $m\geq 0$, the natural morphism
$$\lim_{\stackrel{\longrightarrow}{d\in \N'}}H^m(X\times_K
K(d),\Lambda)\to H^m(X\times_K K^t,\Lambda)$$ is an isomorphism,
where we ordered the elements $d$ of $\N'$ by divisibility
\cite[VII.5.8]{sga4.2}. Therefore, it is enough to show that
\eqref{eq-strictloc} is an isomorphism when we replace $K^t$ by
$K(d)$, with $d$ an element of $\N'$ that is divisible by $\nu$.

Let $d$ be such an element, and set
\begin{eqnarray*}
\mathscr{Y}_{\overline{a}}(d)=\Spec
\left(\mathcal{O}(\mathscr{Y}_{\overline{a}})[s_i\,|\,i\in
I_{\overline{a}}]/(s_i^d-y_i)_{i\in I_{\overline{a}}}\right)
\\[2ex] \mathscr{Z}_{\overline{b}}(d)=\Spec
\left(\mathcal{O}(\mathscr{Z}_{\overline{b}})[t_i\,|\,i\in
I_{\overline{a}}]/(t_i^d-z_i)_{i\in I_{\overline{a}}}\right)
\end{eqnarray*}

 By \cite[I.3.3]{sga7a}, the schemes
$Z\times_K K^t$ and $Y\times_K K^t$ consist of $\nu$ connected
components that are permuted transitively by the Galois action of
$G(K^t/K)$. Since \eqref{eq-strictloc} is Galois-equivariant, it
is an isomorphism for $m=0$. \fi
\end{proof}

\begin{prop}\label{prop-tamenearby}We follow the notations
introduced in Section \ref{subsec-sncd}. Let $J$ be a non-empty
subset of $I$, and fix an integer $m\geq 0$. The restriction of
$$R^m\psi^t_{\mathscr{X}}(\Lambda)$$ to $E_J^o$ is lisse, and tamely ramified
along the irreducible components of $E_J\setminus E_J^o$.

More precisely, the sheaf
$$R^m\psi^t_{\mathscr{X}}(\Lambda)$$
becomes constant on the finite \'etale covering
$\widetilde{E}_J^o$ of $E_J^o$ of degree $N'_J\in \N'$.
\end{prop}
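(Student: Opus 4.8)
The plan is to deduce the proposition from Lemma~\ref{lemm-tamenearby} by transporting the problem to the finite étale covering $\widetilde{\mathscr{U}}\to\mathscr{U}$ of \eqref{eq-covering}, on which the unit obstructing a direct application of that lemma has been turned into an $N'_J$-th power. First I would note that tame nearby cycles commute with étale base change: this is the case of the smooth base change isomorphism \cite[XIII.2.1.7.2]{sga7b} (already used in the proof of Lemma~\ref{lemm-tamenearby}) in which the smooth morphism is étale. Writing $\rho\colon\widetilde{\mathscr{U}}\to\mathscr{X}$ for the composite of $\widetilde{\mathscr{U}}\to\mathscr{U}$ with the open immersion $\mathscr{U}\hookrightarrow\mathscr{X}$, this gives $\rho_s^{\ast}R^m\psi^t_{\mathscr{X}}(\Lambda)\cong R^m\psi^t_{\widetilde{\mathscr{U}}}(\Lambda)$. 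Since $\rho$ is étale and $\widetilde{E}^o_J=\rho^{-1}(E^o_J)$, and since the properties ``lisse'', ``tamely ramified along a divisor'' and ``constant'' all descend along the surjective finite étale morphism $\widetilde{E}^o_J\to E^o_J$, it suffices to prove that $R^m\psi^t_{\widetilde{\mathscr{U}}}(\Lambda)$ is lisse on $\widetilde{E}^o_J$ and that it is the constant sheaf there.

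For lisseness I would apply Lemma~\ref{lemm-tamenearby} at each closed point $\widetilde{x}$ of $\widetilde{E}^o_J$, lying over $x\in E^o_J$. Pick a regular system of parameters $(x_1,\dots,x_d)$ in $\mathcal{O}_{\mathscr{X},x}$ with $\pi=u\prod_{j\in J}x_j^{N_j}$, where the $x_j$, $j\in J$, cut out the components $E_j$ passing through $x$ (as in the proof of Lemma~\ref{lemma-cov}). Its pullback is a regular system of parameters at $\widetilde{x}$ by Lemma~\ref{lemm-para}, and by the construction \eqref{eq-localcover} the unit $u$ pulls back to $t^{N'_J}$ for the regular function $t$ on $\widetilde{\mathscr{U}}$, which is a unit. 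Hence near $\widetilde{x}$ one has $\pi=t^{N'_J}\prod_{j\in J}\widetilde{x}_j^{N_j}$, which is exactly the hypothesis \eqref{eq-mu} of Lemma~\ref{lemm-tamenearby} with $\mu=\gcd\{N'_j\mid j\in J\}=N'_J$. The lemma then produces an open neighbourhood of $\widetilde{x}$ in $\widetilde{E}^o_J$ on which $R^m\psi^t_{\widetilde{\mathscr{U}}}(\Lambda)$ is constant; thus this sheaf is lisse on $\widetilde{E}^o_J$, and $R^m\psi^t_{\mathscr{X}}(\Lambda)$ is lisse on $E^o_J$.

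The crux is to promote ``locally constant'' to ``constant'' on $\widetilde{E}^o_J$, i.e.\ to show that the geometric monodromy of $R^m\psi^t_{\mathscr{X}}(\Lambda)|_{E^o_J}$ is killed by the covering $\widetilde{E}^o_J$. The mechanism I would use is that the functions $t_iv_i$ occurring in \eqref{eq-covering} glue to a global regular function $w$ on $\widetilde{\mathscr{U}}$ with $w^{N'_J}=\pi$ (the function $w$ already exhibited in the proof of Proposition~\ref{prop-normal}). Thus $\widetilde{\mathscr{U}}$ is a regular flat scheme of finite type over $R'=R[s]/(s^{N'_J}-\pi)$, and because $K^t$ is also a tame closure of the quotient field $K'$ of $R'$, the complex of tame nearby cycles is unchanged under this tamely ramified extension of the base: $R^m\psi^t_{\widetilde{\mathscr{U}}}(\Lambda)\cong R^m\psi^t_{\widetilde{\mathscr{U}}/R'}(\Lambda)$ as sheaves on $\widetilde{\mathscr{U}}_s$. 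Over $R'$, along $\widetilde{E}^o_J$ the scheme $\widetilde{\mathscr{U}}$ is an $sncd$-model whose special fibre is $\sum_{j\in J}(N_j/N'_J)F_j$ with $F_j=\rho^{-1}(E_j)$, and the gcd of the prime-to-$p$ parts of the multiplicities $N_j/N'_J$ equals $(\gcd_{j\in J}N'_j)/N'_J=1$. Applying Lemma~\ref{lemm-tamenearby} over $R'$ with $\mu=1$, the integer $\nu$ in its proof is $1$ at every point of $\widetilde{E}^o_J$, so the Hochschild--Serre computation there (resting on the isomorphism \eqref{eq-procov}) identifies $R^m\psi^t_{\widetilde{\mathscr{U}}/R'}(\Lambda)$, near each such point, with the constant sheaf whose value is a Tate twist of the $m$-th exterior power of the $\Lambda$-module freely generated by the branches $F_j$, $j\in J$, modulo the single relation $\sum_{j\in J}(N_j/N'_J)[F_j]=0$. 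Since the $F_j$ are globally defined irreducible components of $\widetilde{\mathscr{U}}_s$ and the procovering of \eqref{eq-procov} is built from them, these local identifications are compatible, so $R^m\psi^t_{\widetilde{\mathscr{U}}/R'}(\Lambda)$ is constant on $\widetilde{E}^o_J$; unwinding the reductions, $R^m\psi^t_{\mathscr{X}}(\Lambda)$ becomes constant on $\widetilde{E}^o_J$. Finally, as $N'_J$ is prime to $p$ and $E^o_J$ is the complement of the strict normal crossings divisor $\bigcup_{i\in I\setminus J}(E_i\cap E_J)$ in the regular scheme $E_J$ (Lemma~\ref{lemm-sncdequiv}), any lisse sheaf trivialized by $\widetilde{E}^o_J\to E^o_J$ is automatically tamely ramified along the components of $E_J\setminus E^o_J$.

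I expect the main obstacle to be exactly this last step. Lemma~\ref{lemm-tamenearby} is intrinsically local, so one has to argue that the constant sheaves it produces on an open cover of $\widetilde{E}^o_J$ glue to a \emph{global} constant sheaf; passing to $R'$ is what removes the potential $\mu_{N'_J}$-twist in the monodromy, but one must still check that the trivializations coming from Lemma~\ref{lemm-tamenearby} are canonical enough to patch, which is where the globality of the branches $F_j$ enters. A secondary point that needs care is the comparison $R^m\psi^t_{\widetilde{\mathscr{U}}}(\Lambda)\cong R^m\psi^t_{\widetilde{\mathscr{U}}/R'}(\Lambda)$: here one uses that $\widetilde{\mathscr{U}}$ is the \emph{normalization} of $\mathscr{U}\times_R R'$, so that $\widetilde{\mathscr{U}}\times_R R^{\mathrm{sh}}=\widetilde{\mathscr{U}}\times_{R'}(R')^{\mathrm{sh}}$, the two separable closures of $K$ and $K'$ coincide, and the special fibres of $\widetilde{\mathscr{U}}$ over $R$ and over $R'$ have the same underlying reduced scheme — to which $\ell$-adic nearby cycles are insensitive — so that the complexes (and hence each cohomology sheaf) agree.
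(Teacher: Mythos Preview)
Your approach is correct and its core is exactly the paper's: pull back along the finite étale cover where the unit $u$ becomes an $N'_J$-th power, then invoke Lemma~\ref{lemm-tamenearby} with $\mu=N'_J$ to get constancy on Zariski neighbourhoods inside $\widetilde{E}_J^o$. The paper does precisely this (with the local cover $\mathscr{Y}=\Spec(\mathcal{O}(\mathscr{U})[v]/(v^{N'_J}-u))$ playing the role of your $\widetilde{\mathscr{U}}$), and then reduces the global constancy statement to connected components via the stalk description of \cite[I.3.3]{sga7a}. Your additional passage to $R'=R[s]/(s^{N'_J}-\pi)$ is not in the paper and is not needed: the base change to the degree-$\nu$ extension is already absorbed into the proof of Lemma~\ref{lemm-tamenearby} (the reduction to $\nu=1$), so working over $R'$ merely unpacks that step a second time without actually resolving the gluing concern you flag---the question of whether the local trivializations patch is the same over $R$ or over $R'$. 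You are right that this local-to-global step is the delicate one; the paper treats it tersely via \cite[I.3.3]{sga7a}, and your attempt to make it explicit through globally defined branches $F_j$ is a reasonable elaboration, but the detour through $R'$ buys no simplification. One wording issue: ``constant'' does not descend along finite étale covers; what you actually use is that lisseness descends and that trivialization by a prime-to-$p$ cover forces tame ramification, while constancy on $\widetilde{E}_J^o$ is the target statement itself.
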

\begin{proof}
We may assume that $\mathscr{X}$ is connected, and we denote its
dimension by $n$. We put $q=|J|$. We may suppose that $E_J$ is
non-empty. This implies that $q\leq n$. We choose a bijection
between $J$ and $\{1,\ldots,q\}$.

 Let $x$ be a closed point
of $E_J^o$. There exist a regular system of parameters
$(x_1,\ldots,x_n)$ and a unit $u$ in $\mathcal{O}_{\mathscr{X},x}$
such that
$$\pi=u\prod_{i=1}^q (x_i)^{N_i}.$$ Let $\mathscr{U}$ be a connected affine open
neighbourhood of $x$ in $\mathscr{X}$ such that $x_1,\ldots,x_n$
and $u$ are regular functions on $\mathscr{U}$, and $u$ is a unit
in $\mathcal{O}(\mathscr{U})$.
 Consider the finite \'etale covering
$$f:\mathscr{Y}=\Spec (\mathcal{O}(\mathscr{U})[v]/(v^{N'_J}-u))\rightarrow \mathscr{U}$$
and let $y$ be a point on $\mathscr{Y}$ that is mapped to $x$ by
$f$. Since $f$ is \'etale, we have an isomorphism
$$f_s^*R^m\psi^t_{\mathscr{X}}(\Lambda)\cong
R^m\psi^t_{\mathscr{Y}}(\Lambda)$$ of constructible
$\Lambda$-sheaves on $\mathscr{Y}_s$.

The locally closed subset $E_J^o$ of $\mathscr{X}$ might not be
connected, but by the local computations in \cite[I.3.3]{sga7a},
it is enough to show that
$$R^m\psi^t_{\mathscr{X}}(\Lambda)$$ becomes constant on every
connected component of $\widetilde{E}_J^o$. By construction of the
covering $\widetilde{E}_J^o$, there is an isomorphism of
$E_J^o$-schemes
$$\widetilde{E}^o_J\times_{\mathscr{X}}\mathscr{U}\cong
\mathscr{Y}\times_{\mathscr{X}}E_J^o.$$  Thus it suffices to show
that
$$R^m\psi^t_{\mathscr{Y}}(\Lambda)$$
is constant on a Zariski-open neighbourhood of $y$ in
$\mathscr{Y}\times_{\mathscr{X}} E_J^o$. This follows from Lemma
\ref{lemm-tamenearby}, because
$$(f^*x_1,\ldots,f^*x_n)$$ is a regular system of parameters in
$\mathcal{O}_{\mathscr{Y},y}$ by Lemma \ref{lemm-para}, and
$$\pi=v^{N'_J}\prod_{i=1}^q (f^*x_i)^{N_i}$$
in $\mathcal{O}_{\mathscr{Y},y}$.
\end{proof}

\subsection{The tame monodromy zeta function}
\begin{definition}
Let $Y$ be a separated $K$-scheme of finite type. The tame
monodromy zeta function $\zeta_Y(\vart)$ of $Y$ is defined by
$$\zeta_Y(\vart)=\prod_{m\geq 0}\mathrm{det}(\vart\cdot \mathrm{Id}- \varphi\,|\,H^m_c(Y\times_K
K^t,\Q_\ell))^{(-1)^{m+1}}\ \in \Q_\ell(\vart).$$
\end{definition}
\begin{theorem}\label{thm-ac}
We follow the notations introduced in Section \ref{subsec-sncd}.
Let $Z$ be a subscheme of $\mathscr{X}_s$. We have
\begin{equation}\label{eq-AC}
\prod_{m\geq 0}\mathrm{det}(\vart\cdot \mathrm{Id}-
\varphi\,|\,\mathbb{H}^m_c(Z,R\psi^t_{\mathscr{X}}(\Q_\ell)|_Z)^{(-1)^{m+1}}=\prod_{i\in
I}(\vart^{N'_i}-1)^{-\chi(E_i^o\cap Z)}\end{equation} and, for
every element $d\in \Z_{>0}$,
\begin{equation}\label{eq-trace}\sum_{m\geq 0}(-1)^m
\mathrm{Trace}(\varphi^d\,|\,\mathbb{H}_c^m(Z,R\psi^t_{\mathscr{X}}(\Q_\ell)|_Z))=\sum_{N'_i|d}N'_i
\chi(E_i^o\cap Z).\end{equation}

In particular, if $\mathscr{X}$ is proper, then the tame monodromy
zeta function of $X=\mathscr{X}\times_R K$ is given by
\begin{equation}\label{eq-AC2}\zeta_X(\vart)=\prod_{i\in
I}(\vart^{N'_i}-1)^{-\chi(E_i^o)}\end{equation} and for every
element $d\in \Z_{>0}$, we have
\begin{equation}\label{eq-trace2}\sum_{m\geq 0}(-1)^m
\mathrm{Trace}(\varphi^d\,|\,H^m(X\times_K
K^t,\Q_\ell))=\sum_{N'_i|d}N'_i \chi(E_i^o).\end{equation}
\end{theorem}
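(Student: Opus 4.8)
\emph{Plan.} The strategy is to prove the trace identity \eqref{eq-trace} for all $d\geq 0$ and deduce everything else from it. First I would observe that \eqref{eq-AC} is a formal consequence of \eqref{eq-trace}: writing $V_m=\mathbb{H}^m_c(Z,R\psi^t_{\mathscr{X}}(\Q_\ell)|_Z)$, the standard exponential formula gives $\prod_m\det(\mathrm{Id}-t\varphi\,|\,V_m)^{(-1)^{m+1}}=\exp\big(\sum_{d\geq 1}(\sum_m(-1)^m\mathrm{Trace}(\varphi^d\,|\,V_m))\,t^d/d\big)$, which \eqref{eq-trace} turns into $\prod_{i}(1-t^{N'_i})^{-\chi(E_i^o\cap Z)}$; substituting $t\mapsto t^{-1}$, multiplying by $t^{-\sum_m(-1)^m\dim V_m}$, and invoking the $d=0$ instance of \eqref{eq-trace} (which reads $\sum_m(-1)^m\dim V_m=\sum_i N'_i\chi(E_i^o\cap Z)$) yields \eqref{eq-AC}. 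For \eqref{eq-AC2} and \eqref{eq-trace2} I would take $Z=\mathscr{X}_s$: since $\mathscr{X}$ is proper over $R$, the fiber $\mathscr{X}_s$ is proper over $k$, so $\mathbb{H}^m_c(\mathscr{X}_s,R\psi^t_{\mathscr{X}}(\Q_\ell))=\mathbb{H}^m(\mathscr{X}_s,R\psi^t_{\mathscr{X}}(\Q_\ell))$, and by proper base change for nearby cycles together with \eqref{eq-invar} this is $G(K^t/K)$-isomorphic to $H^m(X\times_K K^t,\Q_\ell)=H^m_c(X\times_K K^t,\Q_\ell)$.

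Next I would reduce \eqref{eq-trace} to a computation on strata. The locally closed subschemes $E_J^o\cap Z$, $\emptyset\neq J\subseteq I$, partition $Z$, and for any open–closed decomposition and any $\varphi$-equivariant complex $\mathcal F\in D^b_c$ there is a $\varphi$-equivariant distinguished triangle $j_!j^*\mathcal F\to\mathcal F\to i_*i^*\mathcal F\to$; hence $\sum_m(-1)^m\mathrm{Trace}(\varphi^d\,|\,\mathbb{H}^m_c(-,R\psi^t_{\mathscr{X}}(\Q_\ell)|_-))$ is additive along this partition. So it suffices to show that the contribution of the stratum $E_J^o\cap Z$ equals $N'_j\,\chi(E_j^o\cap Z)$ when $J=\{j\}$ and $N'_j\mid d$, equals $0$ when $J=\{j\}$ and $N'_j\nmid d$, and vanishes whenever $|J|\geq 2$ — which matches the right-hand side of \eqref{eq-trace}.

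For the stratum contribution I would combine Proposition \ref{prop-tamenearby} with Grothendieck's pointwise computation of the tame nearby cycles on a strict normal crossings divisor \cite[I.3.3]{sga7a}. These yield, on $E_J^o$: the sheaf $R^m\psi^t_{\mathscr{X}}(\Q_\ell)|_{E_J^o}$ vanishes for $m\geq|J|$; the sheaf $R^0\psi^t_{\mathscr{X}}(\Q_\ell)|_{E_J^o}$ is lisse of rank $N'_J$, and under the identification of $p\colon\widetilde{E}_J^o\to E_J^o$ with the $\mu_{N'_J}$-torsor \eqref{eq-covering} it is $p_*\Q_\ell$ with $\varphi$ acting through a fixed generator $\gamma_0$ of the deck group $\mu_{N'_J}$ (Grothendieck's computation exhibits $\varphi$ as a single $N'_J$-cycle on the $N'_J$ sheets); and for $0\leq b\leq|J|-1$ there is a $\varphi$-equivariant isomorphism $R^b\psi^t_{\mathscr{X}}(\Q_\ell)|_{E_J^o}\cong(R^0\psi^t_{\mathscr{X}}(\Q_\ell)|_{E_J^o})^{\oplus\binom{|J|-1}{b}}$, the extra ``Koszul'' factor being a successive extension of Tate twists, on which $\varphi$ therefore acts with trace $\binom{|J|-1}{b}$. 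Plugging this into the $\varphi$-equivariant spectral sequence $\mathbb{H}^a_c(E_J^o\cap Z,R^b\psi^t|)\Rightarrow\mathbb{H}^{a+b}_c(E_J^o\cap Z,R\psi^t|)$ and using that $p$ is finite, so $\mathbb{H}^a_c(E_J^o\cap Z,R^0\psi^t|)=H^a_c(\widetilde{E}_J^o\cap p^{-1}(Z),\Q_\ell)$ with $\varphi^d$ acting as $(\gamma_0^d)^*$, the stratum contribution equals $$\Big(\sum_{b=0}^{|J|-1}(-1)^b\binom{|J|-1}{b}\Big)\cdot\sum_a(-1)^a\mathrm{Trace}\big((\gamma_0^d)^*\,\big|\,H^a_c(\widetilde{E}_J^o\cap p^{-1}(Z),\Q_\ell)\big)=(1-1)^{|J|-1}\cdot\chi\big((\widetilde{E}_J^o\cap p^{-1}(Z))^{\gamma_0^d}\big),$$ where the last equality is the Lefschetz trace formula for $\gamma_0^d$, whose order divides $N'_J$ and is therefore prime to $p$. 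For $|J|\geq 2$ the factor $(1-1)^{|J|-1}$ annihilates the product. For $J=\{j\}$ the factor is $1$: if $N'_j\mid d$ then $\gamma_0^d=\mathrm{id}$, the fixed locus is all of $\widetilde{E}_j^o\cap p^{-1}(Z)$, and $\chi(\widetilde{E}_j^o\cap p^{-1}(Z))=N'_j\,\chi(E_j^o\cap Z)$ by multiplicativity of $\chi$ along the degree-$N'_j$ étale cover $p$; if $N'_j\nmid d$ then $\gamma_0^d$ is a nontrivial element of $\mu_{N'_j}$ acting freely on the torsor $\widetilde{E}_j^o$, so its fixed locus is empty and the contribution is $0$. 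This gives exactly the asserted stratum contributions, hence \eqref{eq-trace} and the whole theorem.

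The hard part will be extracting, from \cite[I.3.3]{sga7a} together with Proposition \ref{prop-tamenearby}, the precise $\varphi$-equivariant structure of $R\psi^t_{\mathscr{X}}(\Q_\ell)$ along each stratum — in particular the identification of $\varphi$ on $R^0\psi^t_{\mathscr{X}}(\Q_\ell)|_{E_J^o}$ with a deck transformation of $\widetilde{E}_J^o$, and the $\varphi$-equivariant splitting off of the Koszul factor; this is precisely what isolating Proposition \ref{prop-tamenearby} is designed to make available, after which the argument runs just as in A'Campo's treatment of the Milnor fibration. The remaining ingredients (the trace-to-characteristic-polynomial passage, additivity along the stratification, degeneration of the spectral sequence, and the Lefschetz fixed point formula) are formal, the last one being unproblematic here since every automorphism involved has order prime to $p$.
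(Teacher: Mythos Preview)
Your proposal is correct and follows the same overall architecture as the paper: reduce \eqref{eq-AC} to \eqref{eq-trace} via the exponential identity, reduce \eqref{eq-trace} to a stratum-by-stratum computation by additivity, and then compute the contribution of each stratum using Proposition~\ref{prop-tamenearby} together with Grothendieck's local description \cite[I.3.3]{sga7a}. The difference lies in how the stratum contribution is evaluated. The paper does not unpack the global $\varphi$-equivariant structure of $R^b\psi^t$ along $E_J^o$; instead it observes that each $R^b\psi^t|_{E_J^o\cap Z}$ is lisse and tamely ramified along a normal compactification (this is exactly the content of Proposition~\ref{prop-tamenearby}), that $\varphi$ acts on it with finite order (by \cite[I.3.3]{sga7a}), and then invokes the black-box lemma \cite[5.1]{NiSe}: for such a sheaf the alternating trace on $H^*_c$ equals $\chi(Z)$ times the stalkwise trace. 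The stalkwise trace is then read off directly from \cite[I.3.3]{sga7a}. Your route---identifying $R^0\psi^t|_{E_J^o}$ with $p_*\Q_\ell$ carrying the deck action, splitting off the Koszul factor $\varphi$-equivariantly, and applying the Lefschetz fixed-point formula to the prime-to-$p$ order automorphism $\gamma_0^d$ on the non-proper cover---amounts to reproving \cite[5.1]{NiSe} in this special case. It works, but note that the Lefschetz step you invoke is not the classical one for proper varieties; you need the Deligne--Lusztig version (automorphism of order prime to $p$ on a separated finite-type $k$-scheme), and the global tensor decomposition $R^b\psi^t\cong p_*\Q_\ell\otimes\wedge^b(\Q_\ell(-1)^{|J|-1})$ over $E_J^o$ requires a word of justification beyond the stalkwise statement. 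The paper's packaging avoids both of these by pushing them into \cite[5.1]{NiSe}.
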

\begin{proof}
Equations \eqref{eq-AC2} and \eqref{eq-trace2} follow from
\eqref{eq-AC} and \eqref{eq-trace}, by taking $Z=\mathscr{X}_s$
and applying the spectral sequence for tame nearby cycles
\cite[I.2.7.3]{sga7a}. For every endomorphism $M$ on a finite
dimensional vector space $V$ over a field $F$ of characteristic
zero, we have the identity \cite[1.5.3]{weil1}
$$\det(\mathrm{Id}-\vart\cdot M\,|\,V)^{-1}=\mathrm{exp}(\sum_{d>0}\mathrm{Trace}(M^d\,|\,V)\frac{\vart^d}{d})$$
in $F[[\vart]]$. Using this identity, \eqref{eq-AC} can easily be
deduced from \eqref{eq-trace} (for a similar argument, see
\cite[\S1]{A'C}). So it suffices to prove \eqref{eq-trace}. Both
sides of \eqref{eq-trace} are additive w.r.t. partitions of $Z$
into subvarieties, so that we may assume that $Z$ is contained in
$E_J^o$, for some non-empty subset $J$ of $I$, and that $Z$ is
normal. We choose a normal compactification $\overline{Z}$ of $Z$,
and a closed point $z$ on $Z$.

By the spectral sequence for hypercohomology, we have
\begin{eqnarray*}
& &\sum_{m\geq 0}(-1)^m
\mathrm{Trace}(\varphi^d\,|\,\mathbb{H}_c^m(Z,R\psi^t_{\mathscr{X}}(\Q_\ell)|_Z))\\&=&\sum_{a,b\geq
0}(-1)^{a+b}\mathrm{Trace}(\varphi^d\,|\,H^a_c(Z,R^b\psi^t_{\mathscr{X}}(\Q_\ell)|_Z))
\end{eqnarray*}
By Proposition
\ref{prop-tamenearby}, the sheaf
$$R^b\psi^t_{\mathscr{X}}(\Q_\ell)|_{Z}$$
is lisse, and tamely ramified along the irreducible components of
$\overline{Z}\setminus Z$. By the local computation in
\cite[I.3.3]{sga7a}, the action of $\varphi$ on
$$R^b\psi^t_{\mathscr{X}}(\Q_\ell)|_{Z}$$ has finite order.
 By
\cite[5.1]{NiSe} and \cite[I.3.3]{sga7a}, we have
\begin{eqnarray*}
& & \sum_{a,b\geq 0}(-1)^{a+b}
\mathrm{Trace}(\varphi^d\,|\,H^a_c(Z,R^b\psi^t_{\mathscr{X}}(\Q_\ell)|_{Z}))
\\ &=&\chi(Z)\cdot \sum_{b\geq 0}(-1)^b
 \mathrm{Trace}(\varphi^d\,|\,R^b\psi^t_{\mathscr{X}}(\Q_\ell)_z)
\\ &&
\\ &=&\left\{ \begin{array}{ll}0&\mbox{ if }|J|>1\mbox{ or }J=\{i\}\mbox{ with }N'_i\nmid
d,
\\ &
\\ N'_i\chi(Z)&\mbox{ if }J=\{i\}\mbox{ with }N'_i|d. \end{array}\right.
\end{eqnarray*}
(in \cite[5.1]{NiSe}, the condition that $Y$ is normal should be
added to the statement of the lemma). This concludes the proof.
\end{proof}
\begin{cor}\label{cor-tameeuler}
Assume that $\mathscr{X}$ is proper. If $X=\mathscr{X}\times_R K$
is cohomologically tame, then
\begin{equation*}
\chi(X)=\sum_{i\in I}N'_i\chi(E_i^o).\end{equation*} The converse
implication holds if $X$ is geometrically irreducible and of
dimension one.
\end{cor}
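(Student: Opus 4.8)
The plan is to derive both assertions from the trace formula \eqref{eq-trace2} obtained in Theorem \ref{thm-ac}, specialized to $d=0$ and to appropriate divisibility conditions. First I would observe that the $0$-th power of $\varphi$ is the identity, so that for any $d$ divisible by every $N'_i$ (such $d$ exist, e.g. $d=\mathrm{lcm}_i N'_i$, using that each $N'_i\in \N'$) the right-hand side of \eqref{eq-trace2} becomes $\sum_{i\in I}N'_i\chi(E_i^o)$, while the left-hand side becomes $\sum_{m\geq 0}(-1)^m\mathrm{Trace}(\varphi^d\,|\,H^m(X\times_K K^t,\Q_\ell))$. The point is now to compare this with $\chi(X)$: we have $\chi(X)=\sum_{m\geq 0}(-1)^m \dim H^m(X\times_K K^s,\Q_\ell)$ by the definition recalled in the Notations, and $H^m(X\times_K K^s,\Q_\ell)=H^m(X\times_K K^t,\Q_\ell)\oplus(\text{part on which }P\text{ acts nontrivially, killed by }(\cdot)^P)$ — more precisely the first is the $P$-invariants of the second.

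For the forward implication, suppose $X$ is cohomologically tame, so $P$ acts trivially and $H^m(X\times_K K^s,\Q_\ell)=H^m(X\times_K K^t,\Q_\ell)$ for all $m$. Since the action of $\varphi$ on $H^m(X\times_K K^t,\Q_\ell)$ factors through a finite quotient (this is the cohomological tameness together with the fact that $\varphi$ is a topological generator of the procyclic tame inertia, or alternatively one sees it from \cite[I.3.3]{sga7a} applied to an $sncd$-model), I can choose $d$ divisible both by all $N'_i$ and by the order of the $\varphi$-action on each $H^m$; then $\varphi^d$ acts as the identity on every $H^m(X\times_K K^t,\Q_\ell)$, so the left-hand side of \eqref{eq-trace2} equals $\sum_m (-1)^m\dim H^m(X\times_K K^t,\Q_\ell)=\chi(X)$. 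Combining, $\chi(X)=\sum_{i\in I}N'_i\chi(E_i^o)$, which is the desired identity. (Here I use that $X$ admits an $sncd$-model $\mathscr{X}$ — which is part of the hypothesis of the corollary — so Theorem \ref{thm-ac} applies.)

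For the converse, assume $X$ is geometrically irreducible of dimension one, so $X=C$ is a smooth proper geometrically connected curve of some genus $g$, and assume $\chi(C)=\sum_{i\in I}N'_i\chi(E_i^o)$; I must show $P$ acts trivially on $H^m(C\times_K K^s,\Q_\ell)$ for all $m$. Only $m=1$ is at issue: $H^0$ and $H^2$ are one-dimensional with trivial $P$-action for any geometrically connected proper smooth curve, so cohomological tameness of $C$ is equivalent to triviality of the $P$-action on $H^1$, equivalently to $H^1(C\times_K K^t,\Q_\ell)$ having dimension $2g$ (the full dimension of $H^1(C\times_K K^s,\Q_\ell)$). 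Now $\chi(C)=2-2g$, so the hypothesis reads $\sum_{i\in I}N'_i\chi(E_i^o)=2-2g$. On the other hand, taking $d$ divisible by all $N'_i$ in \eqref{eq-trace2} gives $\sum_m(-1)^m\mathrm{Trace}(\varphi^d\,|\,H^m(C\times_K K^t,\Q_\ell))=\sum_{i\in I}N'_i\chi(E_i^o)=2-2g$; since $\varphi^d$ acts trivially on $H^0$ and $H^2$ (both one-dimensional), this is $2-\mathrm{Trace}(\varphi^d\,|\,H^1(C\times_K K^t,\Q_\ell))$, so $\mathrm{Trace}(\varphi^d\,|\,H^1(C\times_K K^t,\Q_\ell))=2g$. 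But $\varphi^d$ has finite order on $H^1(C\times_K K^t,\Q_\ell)$ (again by \cite[I.3.3]{sga7a}), hence is diagonalizable with roots-of-unity eigenvalues, so its trace on a space of dimension $\dim H^1(C\times_K K^t,\Q_\ell)\leq 2g$ has absolute value at most $\dim H^1(C\times_K K^t,\Q_\ell)\leq 2g$, with equality only if every eigenvalue equals $1$ and the dimension is exactly $2g$; thus $\dim H^1(C\times_K K^t,\Q_\ell)=2g$, which means $P$ acts trivially on $H^1(C\times_K K^s,\Q_\ell)$, i.e. $C$ is cohomologically tame.

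The main obstacle I anticipate is the finite-order claim for the $\varphi$-action on the relevant cohomology used to pass from "trace at $d$" to "dimension": for the forward direction I need it on each $H^m(X\times_K K^t,\Q_\ell)$ under the cohomological-tameness hypothesis, and for the converse I need it on $H^1(C\times_K K^t,\Q_\ell)$ unconditionally. This is exactly the input from the local computation in \cite[I.3.3]{sga7a} of the tame nearby cycles on a divisor with strict normal crossings (invoked already in the proof of Theorem \ref{thm-ac}), which shows that $\varphi$ acts with finite order on each stalk $R^b\psi^t_{\mathscr{X}}(\Q_\ell)_z$ and hence, via the spectral sequence for tame nearby cycles, with finite order on $H(X\times_K K^t,\Q_\ell)$ whenever $X$ has an $sncd$-model. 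So the argument is essentially a bookkeeping exercise combining \eqref{eq-trace2} with this finiteness and the Euler-characteristic identities; I should take care that the $d$ I choose is simultaneously divisible by all the relevant integers and that I am comparing Euler characteristics of $H^m$ versus $H^m_c$ correctly (for $X$ proper they agree, by the remarks in the Notations, so this is not an issue).
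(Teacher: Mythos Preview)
Your argument has a genuine gap: the claim that $\varphi$ acts with finite order on $H^m(X\times_K K^t,\Q_\ell)$ is false in general. The local computation \cite[I.3.3]{sga7a} only gives finite order on the stalks of the tame nearby cycles (equivalently, on the graded pieces of the nearby cycles spectral sequence), not on the abutment. For an elliptic curve with reduction type $I_n$ ($n\geq 1$), the minimal regular model is a semi-stable $sncd$-model, yet $\varphi$ acts on $H^1(C\times_K K^t,\Q_\ell)$ by a nontrivial unipotent matrix, so it has infinite order. The same objection applies to your converse argument, where you invoke finite order to get diagonalizability.

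The fix is easy: what you actually need is not finite order but quasi-unipotence, i.e.\ that all eigenvalues of $\varphi$ on $H^m(X\times_K K^t,\Q_\ell)$ are roots of unity. This follows from Grothendieck's monodromy theorem \cite[I.1.3]{sga7a}, or from the finite-order action on the graded pieces of the spectral sequence. Then for $d$ divisible by the order of the semisimple part of $\varphi$ on each $H^m$, the operator $\varphi^d$ is unipotent and hence $\mathrm{Trace}(\varphi^d)=\dim$; this salvages both directions of your argument. That said, the paper's own proof bypasses this entirely: it simply reads off the tame Euler characteristic $\chi_{\mathrm{tame}}(X)=\sum_m(-1)^m\dim H^m(X\times_K K^t,\Q_\ell)$ as minus the degree of $\zeta_X(\vart)$ in \eqref{eq-AC2}, giving $\chi_{\mathrm{tame}}(X)=\sum_i N'_i\chi(E_i^o)$ immediately. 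The forward implication is then just $\chi(X)=\chi_{\mathrm{tame}}(X)$ under tameness, and the converse for curves is the observation that $\chi(X)-\chi_{\mathrm{tame}}(X)=\dim H^1(X\times_K K^t,\Q_\ell)-\dim H^1(X\times_K K^s,\Q_\ell)$, which vanishes iff $P$ acts trivially on $H^1$. No trace manipulations or monodromy theorem needed.
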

\begin{proof}
The opposite of the degree of $\zeta_X(\vart)$ is equal to the
tame Euler characteristic
$$\chi_{\mathrm{tame}}(X)=\sum_{m\geq 0}(-1)^m \mathrm{dim}\,H^m(X\times_K
K^t,\Q_\ell).$$ By Theorem \ref{thm-ac}, we find
$$\chi_{\mathrm{tame}}(X)=\sum_{i\in I}N'_i \chi(E_i^o).$$
 If $X$ is
cohomologically tame, then $\chi(X)=\chi_{\mathrm{tame}}(X)$. If
$X$ is geometrically irreducible and of dimension one, then the
converse implication holds as well, because the wild inertia acts
trivially on $H^m(X\times_K K^s,\Q_\ell)$ for $m\in \{0,2\}$.
\end{proof}

\begin{remark}
If $X$ is a curve, then one can use \cite[3.3]{abbes} instead of
Proposition \ref{prop-tamenearby} to prove Theorem \ref{thm-ac}.
This case suffices for the applications in Section
\ref{sec-saito}. The case of dimension $>1$ is needed in Section
\ref{sec-trace}.
\end{remark}

\section{Saito's criterion for cohomological tameness, and the
semi-stable reduction theorem}\label{sec-saito}

\subsection{Numerical criteria for cohomological tameness}
Let $C$ be a smooth, projective, geometrically connected curve of
genus $g$, and let $\mathscr{C}$ be an $sncd$-model for $C$, with
$$\mathscr{C}_s=\sum_{i\in I}N_i E_i.$$
For every non-empty subset $J$ of $I$, we define $E_J$ and $E_J^o$
as in Section \ref{subsec-sncd}. For every integer $d\geq 0$, we
denote by $I_d$ the subset of $I$ consisting of the indices $i$
such that $d|N_i$. For every
$i\in I$, we put \begin{eqnarray*} \kappa_i&=&-(E_i\cdot E_i)\\
\nu_i&=&(E_i\cdot K_{\mathscr{C}/R})\end{eqnarray*} where
$K_{\mathscr{C}/R}$ is a relative canonical divisor. We denote by
$g_i$ the genus of $E_i$. The component $E_i$ is called
\emph{principal} if $g_i>0$ or $E_i\setminus E_i^o$ contains at
least three points.

Recall the following well-known identities, for each $j\in I$:
\begin{eqnarray}
\sum_{i\in I}N_i(E_i\cdot E_j)&=&0, \label{eq1}
\\ 2g_j-2&=&\nu_j-\kappa_j, \label{eq2}
\\ 2g-2&=&\sum_{i\in I}N_i \nu_i. \label{eq3}
\end{eqnarray}
The first formula is obtained by intersecting $\mathscr{C}_s$ with
$E_j$ \cite[9.1.21]{liu}, the second and third follow from the
adjunction formula \cite[9.1.37]{liu}.

Since $\varphi$ acts trivially on $$H^m(C\times_K K^t,\Q_\ell)$$
for $m\in \{0,2\}$, it follows from \eqref{eq-AC} that the
characteristic polynomial
$$P_C(\vart)=\mathrm{det}(\vart\cdot \mathrm{Id}- \varphi\,|\,H^1(C\times_K
K^t,\Q_\ell))$$ is given by
\begin{equation}\label{eq-charpol}P_C(\vart)=(\vart-1)^2 \prod_{i\in
I}(\vart^{N'_i}-1)^{-\chi(E_i^o)}.\end{equation}

\begin{lemma}\label{lemm-euler}
We have
$$\chi(C)=\sum_{i\in I}N_i\chi(E_i^o).$$
\end{lemma}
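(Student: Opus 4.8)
The plan is to compute $\chi(C)$ via the $sncd$-model $\mathscr{C}$ by stratifying the special fiber $\mathscr{C}_s$ and using the additivity of the Euler characteristic, together with the observation that $\chi(C) = \chi(\mathscr{C}_s)$ because $\mathscr{C}$ is proper and flat over a henselian trait with algebraically closed residue field. More precisely, since $\chi$ is insensitive to whether one takes cohomology with or without supports (as recalled in the Notations), and since the stratification $\{E_J^o \mid \emptyset \neq J \subset I\}$ partitions $\mathscr{C}_s$ into locally closed subsets, we have $\chi(\mathscr{C}_s) = \sum_{\emptyset \neq J \subset I} \chi(E_J^o)$. The task is thus to reconcile this with the claimed formula $\sum_{i \in I} N_i \chi(E_i^o)$.

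First I would observe that for $C$ a smooth proper curve, $\chi(C) = 2 - 2g$, and that this equals $\chi(\mathscr{C}_s)$: indeed, by smooth and proper base change applied to the structure morphism $\mathscr{C} \to \Spec R$, one gets $H^m(\mathscr{C}_s, \Q_\ell) \cong H^m(\mathscr{C} \times_R K^s, \Q_\ell)$ in the appropriate sense (or more simply, $\chi$ is constant in proper flat families, being the degree of a relative cohomology class). Next I would use the stratification to write $\chi(\mathscr{C}_s) = \sum_{i \in I} \chi(E_i^o) + \sum_{|J| \geq 2} \chi(E_J^o)$. The key input is that each $E_J^o$ with $|J| = 2$ is a disjoint union of points (the double points of $\mathscr{C}_s$), each contributing $1$, while $E_J^o = \emptyset$ for $|J| \geq 3$ since $\mathscr{C}$ has dimension two; and each $E_i^o$ is a smooth curve. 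One then rewrites $\sum_i \chi(E_i^o)$ in terms of the $\chi(E_i)$ by peeling off the intersection points: $\chi(E_i^o) = \chi(E_i) - (\text{number of points of } E_i \setminus E_i^o)$, and each double point lies on exactly two components $E_i$.

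The main obstacle — and really the crux of the identity — is to produce the multiplicities $N_i$ on the right-hand side, which do \emph{not} appear naturally in the naive stratification count. This is where formula \eqref{eq1}, $\sum_{i \in I} N_i (E_i \cdot E_j) = 0$, enters: intersecting the principal divisor $\mathscr{C}_s = \sum_i N_i E_i$ with each $E_j$ gives relations among the self-intersections $\kappa_j$ and the pairwise intersection numbers $(E_i \cdot E_j)$, and combining these with the adjunction relations \eqref{eq2}, $2g_j - 2 = \nu_j - \kappa_j$, and \eqref{eq3}, $2g - 2 = \sum_i N_i \nu_i$, one can express $2 - 2g = \chi(C)$ as a weighted sum over components. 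Concretely, I expect the argument to run: $\chi(E_i^o) = 2 - 2g_i - \#(E_i \setminus E_i^o) = 2 - 2g_i - \sum_{j \neq i}(E_i \cdot E_j)$ (using that distinct components meet transversally in an $sncd$-model), so that $\sum_i N_i \chi(E_i^o) = \sum_i N_i(2 - 2g_i) - \sum_i N_i \sum_{j \neq i}(E_i \cdot E_j)$; the last double sum is $\sum_i N_i(-(E_i\cdot E_i) + \sum_j N_j^{-1}\cdots)$ — rather, one uses \eqref{eq1} in the form $N_i(E_i\cdot E_i) = -\sum_{j\neq i}N_j(E_i\cdot E_j)$ only after symmetrizing — and after substituting \eqref{eq2} to replace $2 - 2g_i + (E_i\cdot E_i) = -\nu_i$ and then \eqref{eq3}, everything collapses to $\sum_i N_i(-\nu_i) + (\text{intersection terms}) = 2 - 2g$. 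The bookkeeping of the intersection terms, ensuring the off-diagonal contributions cancel against the correction from passing between $E_i$ and $E_i^o$, is the step requiring care; it is purely a matter of organizing the double sum $\sum_{i \neq j} N_i (E_i \cdot E_j)$ and recognizing it via \eqref{eq1}.
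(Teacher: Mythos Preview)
Your proposal is correct and, once you drop the initial detour about $\chi(C)=\chi(\mathscr{C}_s)$ (which you rightly abandon since it does not produce the multiplicities $N_i$), it is essentially the paper's argument run in reverse: the paper starts from $\chi(C)=2-2g=-\sum_i N_i\nu_i$ via \eqref{eq3}, substitutes \eqref{eq2} to get $\sum_i N_i(\chi(E_i)-\kappa_i)$, and then uses \eqref{eq1} together with the symmetry of the double sum $\sum_{i\neq j}N_i(E_i\cdot E_j)=\sum_{i\neq j}N_j(E_i\cdot E_j)$ to arrive at $\sum_i N_i\chi(E_i^o)$. Your bookkeeping sketch is slightly garbled, but the clean way to finish is exactly this symmetrization: $\sum_i N_i\sum_{j\neq i}(E_i\cdot E_j)+\sum_i N_i(E_i\cdot E_i)=\sum_j\bigl(\sum_i N_i(E_i\cdot E_j)\bigr)=0$ by \eqref{eq1}.
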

\begin{proof}
By \eqref{eq3}, we have
$$\chi(C)=2-2g=-\sum_{i\in I}N_i \nu_i.$$
Solving $\nu_i$ from equation \eqref{eq2}, we find
$$\chi(C)=\sum_{i\in I}N_i (\chi(E_i)-\kappa_i).$$
Solving $N_i\kappa_i$ from equation \eqref{eq1}, we obtain
\begin{eqnarray*}
\chi(C)&=& \sum_{i\in I}N_i\chi(E_i)-\sum_{i\in I}\sum_{j\in
I\setminus \{i\}}N_j(E_i\cdot E_j)
\\ &=& \sum_{i\in I}N_i\chi(E_i)-\sum_{i\in I}\sum_{j\in
I\setminus \{i\}}N_i(E_i\cdot E_j)
\\ &=& \sum_{i\in I}N_i\left(\chi(E_i)-\sum_{j\in
I\setminus \{i\}}(E_i\cdot E_j)\right)
\\ &=& \sum_{i\in I}N_i\chi(E^o_i).
\end{eqnarray*}
\end{proof}

\begin{lemma}\label{lemm-ratpoint}
For each $d\in \N'$, we denote by $K(d)$ the unique extension of
$K$ in $K^t$ of degree $d$. For each couple $(i,j)$ in $I\times
I$, we consider the set $$D_{i,j}=\{\alpha_i N_i+\alpha_j
N_j\,|\,\alpha_i,\,\alpha_j\in \N\}.$$ We denote by $S$ the subset
of $I\times I$ consisting of the couples $(i,j)$ such that
$E^o_{\{i,j\}}\neq \emptyset$, and we put
$$D_{\mathscr{C}}=\N'\cap (\cup_{(i,j)\in S} D_{i,j}).$$
Then for each $d\in \N'$, the set $C(K(d))$ is non-empty  if and
only if $d$ belongs to $D_{\mathscr{C}}$.
\end{lemma}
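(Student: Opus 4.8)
The plan is to use the valuative criterion of properness to reinterpret $C(K(d))$ as a set of sections, and then to read off the existence of a section from the local structure of the $sncd$-model. Let $R(d)$ be the normalisation of $R$ in $K(d)$; since $k$ is algebraically closed and $K(d)/K$ is totally tamely ramified of degree $d$, the ring $R(d)$ is again a henselian discrete valuation ring with residue field $k$, and one may choose a uniformiser $\varpi$ of $R(d)$ with $\varpi^d=\pi$. As $\mathscr{C}$ is proper over $R$, the valuative criterion identifies $C(K(d))$ with $\mathscr{C}(R(d))$, and a point of $\mathscr{C}(R(d))$ whose closed point lies over a closed point $x$ of $\mathscr{C}_s$ (which has residue field $k$) is the same thing as a local $R$-algebra homomorphism $\mathcal{O}_{\mathscr{C},x}\to R(d)$. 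So it suffices to decide for which $d$ such a homomorphism exists for some closed point $x$ of $\mathscr{C}_s$.

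For the implication ``$C(K(d))\ne\emptyset\Rightarrow d\in D_{\mathscr{C}}$'', suppose $\phi\colon\mathcal{O}_{\mathscr{C},x}\to R(d)$ is such a homomorphism. Since $\mathscr{C}$ has pure dimension two, at most two irreducible components of $\mathscr{C}_s$ pass through $x$; if $J\subseteq I$ is the set of their indices then $1\le|J|\le 2$, $x\in E_J^o$ (so $E_J^o\ne\emptyset$), and $\pi=u\prod_{l\in J}x_l^{N_l}$ in $\mathcal{O}_{\mathscr{C},x}$ with $u$ a unit and $(x_l)_{l\in J}$ part of a regular system of parameters. Applying the normalised valuation $v$ of $R(d)$ and using that $\phi$ is local ($v(\phi(u))=0$), I get $d=v(\pi)=\sum_{l\in J}N_l\,v(\phi(x_l))$ with each $v(\phi(x_l))\in\N$. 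Writing $J=\{i\}$ or $J=\{i,j\}$, this exhibits $d$ as an element of $D_{i,i}$, respectively $D_{i,j}$, with $(i,i)$, respectively $(i,j)$, in $S$; as $d\in\N'$, we conclude $d\in D_{\mathscr{C}}$. This direction is routine.

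For the converse, let $d\in\N'$ with $d=\alpha_iN_i+\alpha_jN_j$, $(i,j)\in S$, $\alpha_i,\alpha_j\in\N$. If $\alpha_j=0$ then $N_i\mid d$, hence $p\nmid N_i$; since $E_i^o=E_{\{i\}}^o\ne\emptyset$ (being $E_i$ minus a finite set) I may replace $J$ by $\{i\}$, and symmetrically if $\alpha_i=0$; otherwise set $J=\{i,j\}$. Put $m=N'_J$. Then $m\mid d$ (as $m$ divides each $N_l$, $l\in J$, hence $d$), and writing $d'=d/m$ we have $d'\in\N'$ and $K(d)=K(m)(d')$, the degree-$d'$ tame extension of $K':=K(m)$ (both equal $K(\pi^{1/d})$), with $d'=\sum_{l\in J}\alpha_l(N_l/m)$. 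Let $\mathscr{Y}$ be the normalisation of $\mathscr{C}\times_R R(m)$; by Proposition~\ref{prop-normal} and the discussion preceding it, in a neighbourhood of the preimage $\widetilde x$ of a chosen closed point $x\in E_J^o$, the scheme $\mathscr{Y}$ is a regular $sncd$-model over $R(m)$ with local equation $\pi'=u'\prod_{l\in J}y_l^{N_l/m}$ at $\widetilde x$, where $\pi'$ is a uniformiser of $R(m)$, $u'$ is a unit, and $(y_l)_{l\in J}$ is part of a regular system of parameters. The greatest common divisor of the integers $N_l/m$ $(l\in J)$ is the largest power of $p$ dividing all $N_l$; since $d'=\sum_{l\in J}\alpha_l(N_l/m)$ is prime to $p$, this gcd must be $1$. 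Hence, by Bézout, $u'$ is a product of $(N_l/m)$-th powers of units, so after multiplying the $y_l$ by suitable units I may take $u'=1$; then, exactly as in the proof of Proposition~\ref{prop-sncd}, there is an étale morphism of $R(m)$-schemes $g$ from a neighbourhood of $\widetilde x$ in $\mathscr{Y}$ to $\mathscr{Z}=\Spec R(m)[z_1,z_2]/(\pi'-\prod_{l\in J}z_l^{N_l/m})$ carrying $\widetilde x$ to the origin $O$ of $\mathscr{Z}_s$; here $\mathscr{Z}$ is regular at $O$ because $\pi'-\prod_{l\in J}z_l^{N_l/m}$ is part of a regular system of parameters of $\A^2_{R(m)}$ at the origin of its special fiber. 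Finally, over $R(d)=R'(d')$, with uniformiser $\varpi$ satisfying $\varpi^{d'}=\pi'$, the assignment $z_l\mapsto\varpi^{\alpha_l}$ $(l\in J)$, $z_l\mapsto0$ otherwise, defines a section of $\mathscr{Z}\times_{R(m)}R(d)\to\Spec R(d)$ through $O$, since $\prod_{l\in J}\varpi^{\alpha_l N_l/m}=\varpi^{d'}=\pi'$. Because $R(d)$ is henselian and $g$ is étale at $\widetilde x$ with trivial residue extension there, this section lifts along $g$ to a section of $\mathscr{Y}\times_{R(m)}R(d)\to\Spec R(d)$, whose restriction to the generic fiber is, in view of the isomorphism $\mathscr{Y}\times_{R(m)}K'\cong C\times_K K'$ of Proposition~\ref{prop-normal}, a $K(d)$-rational point of $C$.

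The main obstacle lies entirely in the converse direction: one must massage the model until a section becomes visible. The organising idea is to base change to $R(N'_J)$ and normalise (Proposition~\ref{prop-normal}), which lowers the relevant multiplicities to a tuple whose greatest common divisor — crucially using $p\nmid d$ — is $1$; this coprimality is exactly what lets one clear the unit from the local equation and invoke the étale-local normal form of Proposition~\ref{prop-sncd}, on which the section is written down by hand and then transported back by henselianity of $R(d)$.
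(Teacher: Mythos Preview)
Your proof is correct; the forward direction is identical to the paper's, and the converse is a valid variant of the paper's argument. The one notational wrinkle is the case $i=j$ with both $\alpha_i,\alpha_j>0$: then $J=\{i\}$ and your formula ``$z_l\mapsto\varpi^{\alpha_l}$ for $l\in J$'' is ambiguous; you should first replace $(\alpha_i,\alpha_j)$ by the single coefficient $\alpha_i+\alpha_j$.

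The difference from the paper lies in how you neutralise the unit $u$ in the local equation $\pi=u\,x^{N_i}y^{N_j}$. The paper stays over $R$: it passes to an \'etale neighbourhood of $b\in E^o_{\{i,j\}}$ on which $u=v^N$ with $N=\gcd(N_i,N_j)$ (prime to $p$ because $N\mid d$), uses B\'ezout to write $N=m_iN_i+m_jN_j$, and sends $(x',y')\mapsto(v^{m_i}x,\,v^{m_j}y)$ to get an \'etale map to $\Spec R[x',y']/(\pi-(x')^{N_i}(y')^{N_j})$, on which the section $x'\mapsto\pi(d)^{\alpha_i},\,y'\mapsto\pi(d)^{\alpha_j}$ is then lifted by henselianity. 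You instead base change to $R(m)$ with $m=N'_J$ and normalise via Proposition~\ref{prop-normal}; this divides the multiplicities by $m$ and (crucially, since $p\nmid d'$) forces $\gcd(N_l/m)=1$, so the unit can be absorbed directly by B\'ezout without a further \'etale cover, and you land on an analogous standard model over $R(m)$. Both routes end with the same henselian lifting. The paper's version is a shade more direct and self-contained; yours recycles the normalisation machinery already built in \S\ref{subsec-sncd}, which makes the coprimality of the new multiplicities conceptually transparent.
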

\begin{proof}
Assume that $C(K(d))$ is non-empty. Let $a$ be an element of
$C(K(d))$, and denote by $R(d)$ the normalization of $R$ in
$K(d)$. By the valuative criterion for properness, the point $a$
extends uniquely to a section $\psi_a$ in $\mathscr{C}(R(d))$. We
denote by $a_0$ the image in $\mathscr{C}_s$ of the closed point
of $\Spec R(d)$. The point $a_0$ belongs to $E_{\{i,j\}}^o$, for
some couple $(i,j)$ in $S$, and this couple is unique up to
transposition. There exist elements $x,\,y,\,u$ in
$\mathcal{O}_{\mathscr{C},a_0}$ such that $u$ is a unit and $\pi=u
 x^{N_i}y^{N_j}$. If we denote by $v_{K(d)}$ the normalized
discrete valuation on $K(d)$, then the equality
$$\pi=\psi_a^*(u)\psi_a^*(x)^{N_i}\psi_a^*(y)^{N_j}$$ in $R(d)$ implies that
$$d=N_i\cdot v_{K(d)}(\psi_a^*(x))+N_j\cdot v_{K(d)}(\psi_a^*(y)).$$ It follows that  $d\in D_{i,j}$.


So let us show the converse implication. Let $d$ be an element of
$\N'\cap D_{i,j}$, for some $(i,j)\in S$. We'll treat the case
where $i\neq j$, the other case can be proven in a similar
fashion. We may assume that $d\notin D_{i,i}$ and $d\notin
D_{j,j}$. Then there exist elements $\alpha_i$, $\alpha_j$ in
$\N_0$ such that $\alpha_i N_i+\alpha_j N_j=d$. We set
$N=\gcd(N_i,N_j)$.
 By B\'ezout's theorem, we can find integers $m_i$ and $m_j$
such that $N=m_i N_i+m_j N_j$. Note that $N$ divides $d$, so that
$N$ must belong to $\N'$.

Let $b$ be a point of $E_{\{i,j\}}^o$. There exist a regular
system of parameters $(x,y)$ and a unit $u$ in
$\mathcal{O}_{\mathscr{C},b}$ such that
$$\pi=u  x^{N_i}y^{N_j}.$$ We take an integral affine \'etale
neighbourhood $U=\Spec B$ of $b$ in $\mathscr{C}$ such that
$u,\,x,\,y$ are regular functions on $U$ and $u=v^{N}$ for some
unit $v$ in $B$. Then we can define a morphism
$$f:U\rightarrow V=\Spec R[x',y']/(\pi-(x')^{N_i}(y')^{N_j})$$
by $x'\mapsto v^{m_i}x$ and $y'\mapsto v^{m_j}y$. This morphism is
\'etale at every point of $U\times_{\mathscr{C}}b$, by Lemma
\ref{lemm-para}. Since $R(d)$ is strictly henselian, it suffices
to show that $V(R(d))$ contains a section that maps the closed
point of $\Spec R(d)$ to the origin in $V_s$; this section will
then lift to $U$. We can construct such a section by sending $x'$
to $\pi(d)^{\alpha_i}$ and $y'$ to $\pi(d)^{\alpha_j}$, where
$\pi(d)$ is an element of $R(d)$ such that $\pi(d)^d=\pi$. This
concludes the proof.
\end{proof}
\begin{cor}\label{cor-ratpoint}
The set $C(K)$ is non-empty if and only if there exists an element
$\alpha$ in $I$ such that $N_\alpha=1$. The set $C(K^t)$ is
non-empty if and only if there exists an element $\beta$ in $I$
with $p\nmid N_\beta$.
\end{cor}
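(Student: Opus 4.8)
The plan is to deduce Corollary~\ref{cor-ratpoint} directly from Lemma~\ref{lemm-ratpoint}, since the corollary is the explicit unwinding of the criterion ``$d\in D_{\mathscr{C}}$'' in the two extreme cases $d=1$ and ``$d$ ranges over all of $\N'$''.

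First I would treat the statement about $C(K)$. Taking $d=1$ in Lemma~\ref{lemm-ratpoint}, the set $C(K)=C(K(1))$ is non-empty if and only if $1\in D_{\mathscr{C}}$, i.e. if and only if $1\in D_{i,j}$ for some couple $(i,j)\in S$. Now $D_{i,j}=\{\alpha_i N_i+\alpha_j N_j\mid \alpha_i,\alpha_j\in\N\}$ is a sub-semigroup of $\N$ containing $0$, and since all $N_i\geq 1$, the only way to write $1=\alpha_i N_i+\alpha_j N_j$ with $\alpha_i,\alpha_j\in\N$ is to have one coefficient equal to $1$ and the corresponding multiplicity equal to $1$ while the other coefficient is $0$. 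Hence $1\in D_{i,j}$ for some $(i,j)\in S$ exactly when there is an index $\alpha\in I$ lying on some non-empty stratum with $N_\alpha=1$. To see that the stratum condition is automatic, note that $1\notin D_{\mathscr{C}}$ is impossible once some $N_\alpha=1$: the component $E_\alpha$ is non-empty, so it meets at least one stratum $E^o_{\{\alpha,j\}}$ or $E^o_{\{\alpha\}}$ with $N_\alpha=1$, and in either case ($j=\alpha$ allowed in the sense of the couple $(\alpha,\alpha)\in S$) one gets $1=1\cdot N_\alpha+0\cdot N_j\in D_{\alpha,j}$. Thus $C(K)\neq\emptyset \iff \exists\,\alpha\in I: N_\alpha=1$.

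Next I would treat $C(K^t)$. Since $C(K^t)=\varinjlim_{d\in\N'} C(K(d))$ and $K^t$ is the union of the $K(d)$ for $d\in\N'$, we have $C(K^t)\neq\emptyset$ if and only if $C(K(d))\neq\emptyset$ for some $d\in\N'$, which by Lemma~\ref{lemm-ratpoint} means $D_{\mathscr{C}}\neq\emptyset$, i.e. $\N'\cap D_{i,j}\neq\emptyset$ for some $(i,j)\in S$. Fix such a couple $(i,j)\in S$ and set $N=\gcd(N_i,N_j)$; then $D_{i,j}$ consists of certain multiples of $N$ and contains all sufficiently large multiples of $N$ (it is the numerical semigroup generated by $N_i$ and $N_j$, rescaled by $N$). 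Hence $\N'\cap D_{i,j}\neq\emptyset$ if and only if some large multiple of $N$ lies in $\N'$, which happens if and only if $p\nmid N$, i.e. $p\nmid\gcd(N_i,N_j)$, equivalently $p\nmid N_i$ or $p\nmid N_j$. Since every index $\beta\in I$ with $p\nmid N_\beta$ lies on some non-empty stratum $E^o_{\{\beta,j\}}$ (or $E^o_{\{\beta\}}$), this shows $C(K^t)\neq\emptyset$ if and only if there is $\beta\in I$ with $p\nmid N_\beta$.

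The only genuinely non-routine point is the numerical-semigroup observation: that $\N'\cap D_{i,j}\neq\emptyset$ is equivalent to $p\nmid\gcd(N_i,N_j)$. This follows because $D_{i,j}\supseteq\{\alpha N_i+\alpha N_j : \alpha\in\N\}$ already contains arbitrarily large multiples of $N_i+N_j$, and more sharply every multiple $cN$ with $c\gg0$ by the Chicken McNugget (Frobenius) bound applied to $N_i/N$ and $N_j/N$; among these multiples of $N$ one avoids divisibility by $p$ precisely when $N$ itself is prime to $p$. For the converse, if $p\mid\gcd(N_i,N_j)$ then every element of $D_{i,j}$ is divisible by $p$, so $D_{i,j}\cap\N'=\emptyset$. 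I would also remark (as is implicit in the setup) that when $p=0$ the condition ``$p\nmid N_\beta$'' is vacuous and $C(K^t)\neq\emptyset$ always holds, consistent with the fact that $I$ is non-empty. Putting the two equivalences together gives Corollary~\ref{cor-ratpoint}.
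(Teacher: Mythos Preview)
Your proof is correct and follows exactly the route the paper intends: the corollary is stated without proof immediately after Lemma~\ref{lemm-ratpoint}, and your argument is the natural unwinding of that lemma for $d=1$ and for $d$ ranging over $\N'$. One small simplification: the numerical-semigroup/Frobenius argument is more than you need, since if $p\nmid N_\beta$ then $N_\beta=1\cdot N_\beta+0\cdot N_j\in D_{\beta,j}\cap\N'$ directly for any $(\beta,j)\in S$, which already gives $D_{\mathscr{C}}\neq\emptyset$.
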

We will repeatedly use the following elementary lemma.

\begin{lemma}\label{lemma-tree}
Let $I'$ be a non-empty subset of $I$ such that $\cup_{i\in
I'}E_i$ is connected. For each $i\in I'$, we put
$$\overline{E}_i^o=E_i\setminus \cup_{j\in I'\setminus \{i\}}E_j.$$
Then $\sum_{i\in I'}\chi(\overline{E}_i^o)\leq 0$, unless
$\cup_{i\in I'}E_i$ is a tree of rational curves. In the latter
case, $\sum_{i\in I'}\chi(\overline{E}_i^o)=2$.
\end{lemma}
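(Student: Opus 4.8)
The statement is a purely combinatorial fact about the dual graph of the configuration $\bigcup_{i\in I'}E_i$ together with the genera $g_i$ of the components. The plan is to compute $\sum_{i\in I'}\chi(\overline{E}_i^o)$ explicitly in terms of the dual graph $G$ of the configuration and the genera $g_i$. First I would set up the notation: let $G$ be the graph whose vertices are the elements of $I'$ and which has one edge for each intersection point of two distinct components $E_i,E_j$ with $i,j\in I'$ (so $G$ may have multiple edges, corresponding to components meeting in several points). For $i\in I'$ write $\delta_i$ for the number of points of $E_i$ lying on some $E_j$ with $j\in I'\setminus\{i\}$, counted without multiplicity (i.e. the number of distinct such points on $E_i$); this is at most the degree of the vertex $i$ in $G$, with equality when no two of those intersection points coincide. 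Since each $E_i$ is a smooth projective curve of genus $g_i$, we have $\chi(E_i)=2-2g_i$, and removing $\delta_i$ points gives $\chi(\overline{E}_i^o)=2-2g_i-\delta_i$.

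Summing, $\sum_{i\in I'}\chi(\overline{E}_i^o)=2|I'|-2\sum_{i\in I'}g_i-\sum_{i\in I'}\delta_i$. Now $\sum_{i\in I'}\delta_i$ counts each intersection point $P$ with multiplicity equal to the number of components $E_i$ ($i\in I'$) through $P$; since $\mathscr{X}_s$ has strict normal crossings, at most two components pass through any point, so each such $P$ is counted at most twice, and $\sum_{i\in I'}\delta_i\ge 2\,(\text{number of intersection points})\ge 2\,e(G)/2\cdot(\text{something})$ — more carefully, if $e$ denotes the number of edges of $G$ then the number of distinct intersection points is at most $e$ but could be smaller if an edge of $G$ records a component meeting another in several points; however each such point still lies on exactly two components of $I'$, so $\sum_{i\in I'}\delta_i=2\,p(G)$ where $p(G)$ is the number of distinct intersection points among the $E_i$, $i\in I'$. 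The key inequality is then $p(G)\ge v(G)-1$ where $v(G)=|I'|$, because $\bigcup_{i\in I'}E_i$ is connected so the graph $G$ is connected, hence has at least $v(G)-1$ edges, and distinct edges give distinct points only when... — here I need to be slightly careful: connectivity of $G$ gives $e(G)\ge v(G)-1$, but I want a statement about $p(G)$, the number of points. Since collapsing multiple edges between the same pair of vertices to single edges keeps the graph connected, $p(G)\ge v(G)-1$ as well. Therefore
\[
\sum_{i\in I'}\chi(\overline{E}_i^o)=2|I'|-2\sum_{i\in I'}g_i-2p(G)\le 2|I'|-2(|I'|-1)=2,
\]
with equality forcing $\sum g_i=0$ (all $E_i$ rational) and $p(G)=|I'|-1$ (the reduced graph is a tree, and no two components meet in more than one point, and no component meets itself — automatic here since each $E_i$ is assumed irreducible and regular).

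The remaining point is to check that equality $p(G)=v(G)-1$ together with connectivity of the underlying configuration forces $\bigcup_{i\in I'}E_i$ to be a tree of rational curves in the intended sense, and conversely that a tree of rational curves yields the value $2$. For the converse: if $\bigcup_{i\in I'}E_i$ is a tree of rational curves then $g_i=0$ for all $i$, there are exactly $|I'|-1$ intersection points each lying on exactly two components, so $\sum\delta_i=2(|I'|-1)$ and $\sum\chi(\overline E_i^o)=2|I'|-0-2(|I'|-1)=2$. For the forward direction, equality in the chain above forces $\sum g_i=0$ and $p(G)=|I'|-1$; the latter combined with connectivity means the "simple graph" obtained from $G$ is a tree, and since strict normal crossings prevents triple points and self-intersections, the configuration is genuinely a tree of $\mathbb P^1$'s. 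I expect the only mildly delicate step is the bookkeeping distinguishing "number of edges of the dual graph" from "number of actual intersection points" when two components meet transversally in more than one point; but for the inequality only $p(G)\ge v(G)-1$ is needed, and that follows from connectivity of the reduced dual graph, so no real obstacle arises.
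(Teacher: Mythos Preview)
Your argument is essentially correct, but there is one small omission. You prove
\[
\sum_{i\in I'}\chi(\overline{E}_i^o)=2|I'|-2\sum_{i\in I'}g_i-2p(G)\le 2,
\]
and you characterize the equality case as ``tree of rational curves''. However, the lemma asserts that in the non-tree case the sum is $\le 0$, not merely $<2$. You need one more sentence: the displayed expression is manifestly even, so if it is strictly less than $2$ it is at most $0$. Once you add this, the dichotomy in the lemma follows.

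Your hesitation about the distinction between the number of edges $e(G)$ of the dual graph and the number $p(G)$ of intersection points is unnecessary here. You yourself defined $G$ to have one edge per intersection point, so $e(G)=p(G)$ by construction; and since $\mathscr{C}$ is a two-dimensional $sncd$-model, at most two components pass through any point, so your aside that $\delta_i$ might be strictly less than the degree of $i$ never actually occurs. The inequality $p(G)\ge |I'|-1$ is then just the standard fact that a connected (multi)graph on $|I'|$ vertices has at least $|I'|-1$ edges, with equality exactly when it is a tree.

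Your approach differs from the paper's. The paper proves the lemma by induction on $|I'|$, referring to \cite[2.2]{rodrigues} for details: one peels off a component and tracks how the sum changes. Your direct computation via the dual graph is a genuinely different route. It has the advantage of giving a closed formula $2v(G)-2\sum g_i-2e(G)$ for the sum, from which both the inequality and the equality case can be read off at once, without setting up an inductive hypothesis. The inductive argument, on the other hand, avoids having to verify carefully that the dual-graph bookkeeping matches the geometry (your ``more carefully'' paragraph), at the cost of being less self-contained.
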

\begin{proof}
This is easily proven by induction on $|J|$; see
\cite[2.2]{rodrigues}.
\end{proof}


\begin{lemma}\label{lemm-pos}
Fix an integer $d>1$, and let $I'$ be a subset of $I_d$ such that
$\cup_{i\in I'}E_i$ is a connected component of $\cup_{i\in
I_d}E_i$. Then we have
$$\sum_{i\in I'}\chi(E_i^o)\leq 0.$$
\end{lemma}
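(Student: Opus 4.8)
The plan is to combine the elementary Euler characteristic estimate of Lemma \ref{lemma-tree} with the numerical relation \eqref{eq1} and the connectedness of the special fibre. Put $T=\bigcup_{i\in I'}E_i$; this is connected by hypothesis. For $i\in I'$ set $\overline E_i^o=E_i\setminus\bigcup_{j\in I'\setminus\{i\}}E_j$, as in Lemma \ref{lemma-tree}. Since $\mathscr{C}$ is an $sncd$-model, at most two components of $\mathscr{C}_s$ pass through any point, so a point of $E_i$ ($i\in I'$) lying on some $E_j$ with $j\in I\setminus I'$ lies on no further component of $T$; hence $E_i^o=\overline E_i^o\setminus\bigcup_{j\in I\setminus I'}E_j$ and
$$\sum_{i\in I'}\chi(E_i^o)=\sum_{i\in I'}\chi(\overline E_i^o)-m,\qquad m:=\sum_{i\in I'}\sum_{j\in I\setminus I'}(E_i\cdot E_j)\ \ge\ 0,$$
the number of intersection points (counted with multiplicity) of $T$ with the rest of $\mathscr{C}_s$. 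By Lemma \ref{lemma-tree} the first sum is $\le 0$ unless $T$ is a tree of rational curves, in which case it equals $2$. So if $T$ is not a tree of rational curves we are done, and otherwise it remains to prove $m\ge 2$.

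Suppose $m=1$. Then $T$ meets $\bigcup_{j\in I\setminus I'}E_j$ transversally in a single point, lying on $E_{i_0}$ with $i_0\in I'$ and on exactly one further component $E_k$ with $k\in I\setminus I'$. Since $E_k$ meets $T\subset\bigcup_{i\in I_d}E_i$ and $I'$ is an entire connected component of $\bigcup_{i\in I_d}E_i$, we must have $k\notin I_d$, i.e.\ $d\nmid N_k$. Now the $\Q$-divisor $\Delta=\tfrac1d\sum_{i\in I'}N_iE_i$ has integer coefficients because $I'\subset I_d$, so $(\Delta\cdot E_{i_0})\in\Z$; on the other hand, \eqref{eq1} applied to $E_{i_0}$ gives
$$d\,(\Delta\cdot E_{i_0})=\sum_{i\in I'}N_i(E_i\cdot E_{i_0})=-\sum_{j\in I\setminus I'}N_j(E_j\cdot E_{i_0})=-N_k,$$
since $E_{i_0}$ meets no component outside $I'$ other than $E_k$, which it meets in one point. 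Hence $d\mid N_k$, a contradiction, so $m\ne 1$.

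Finally suppose $m=0$. Then $T$ is a union of connected components of $\mathscr{C}_s$, and since $\mathscr{C}$ is proper and flat over the henselian local ring $R$ with geometrically connected generic fibre, $\mathscr{C}_s$ is connected; thus $T=\mathscr{C}_s$, $I'=I$, and $d\mid N_i$ for every $i\in I$. It remains to rule out that $\mathscr{C}_s$ is a tree of rational curves — this is the step where the hypothesis $d>1$ is genuinely used (for $d=1$ the model $\mathscr{C}=\mathbb{P}^1_R$ shows the statement fails). I would argue by descent through intermediate regular models: in a tree of rational curves all of whose multiplicities are $\ge d\ge 2$, an extremal argument with \eqref{eq1} produces a component $E_v\cong\mathbb{P}^1_k$ with $E_v\cdot E_v=-1$; contracting it yields a regular flat $R$-scheme that is again a model of $C$, whose special fibre is a configuration of rational curves with the multiplicities on the surviving components unchanged, hence still all divisible by $d$, but with one component fewer. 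Iterating, one arrives at a model with special fibre $NE$ where $E\cong\mathbb{P}^1_k$, $E\cdot E=0$ and $d\mid N$; then \eqref{eq2} and \eqref{eq3} give $2g-2=N(E\cdot K_{\mathscr{C}/R})=N(E\cdot E+2g_E-2)=-2N<0$, which is absurd.

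The main obstacle is precisely this last step. Producing the $(-1)$-curve from \eqref{eq1} requires excluding the possibility that every leaf of the tree has strictly larger multiplicity than its unique neighbour, and one then has to check that the successive contractions keep us in a situation where the argument can be repeated; the intermediate models need not be $sncd$-models, so some care with the bookkeeping (component genera, self-intersections, the adjunction relations \eqref{eq2}–\eqref{eq3}, which remain valid on any regular proper model) is required.
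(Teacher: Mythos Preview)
Your treatment of the case $I'\neq I$ (your cases $m\ge 1$) is essentially the paper's argument, just organized differently. The paper also picks an index $a\in I'$ with $E_a$ meeting some component outside $I'$; since $I'$ is a full connected component of $\bigcup_{i\in I_d}E_i$, any such component lies outside $I_d$, and then \eqref{eq1} read modulo $d$ forces $E_a$ to meet $\bigcup_{i\notin I_d}E_i$ in at least two points. This is exactly your conclusion $m\ge 2$; your separate $m=1$ argument with the divisor $\Delta=\tfrac1d\sum_{i\in I'}N_iE_i$ is a clean repackaging of the same reduction modulo $d$.

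The genuine gap is your $m=0$ case. The paper does \emph{not} contract $(-1)$-curves; it simply invokes Lemma~\ref{lemm-euler}. If $I'=I$ and $\sum_i\chi(E_i^o)>0$, then $\mathscr{C}_s$ is a tree of rational curves, and Lemma~\ref{lemm-euler} gives $\chi(C)=\sum_iN_i\chi(E_i^o)$; since every $N_i$ is a multiple of $d$, the paper deduces $\chi(C)\ge 2d>2$, which is impossible as $\chi(C)=2-2g\le 2$. That is the whole argument---one line in place of your inductive contraction.

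Your contraction strategy, besides being left incomplete, runs into a real obstruction you have not addressed: the $(-1)$-curve you need is not guaranteed to be a leaf. For a leaf $E_\ell$ with neighbour $E_w$, \eqref{eq1} gives $N_w=\kappa_\ell N_\ell$, so a leaf never has \emph{larger} multiplicity than its neighbour (your phrasing of the obstacle is inverted); the danger is rather that every leaf has $\kappa_\ell\ge 2$. In the affine $\widetilde D_n$ or $\widetilde E$ shaped trees (all $\kappa_i=2$) there is no $(-1)$-curve at all, and in the star underlying type $IV$ the unique $(-1)$-curve is the trivalent centre, whose contraction takes you out of the $sncd$ world and collapses the ``tree of rational curves'' bookkeeping you rely on. You would have to argue separately that such configurations cannot occur with all $N_i$ divisible by $d>1$, which is precisely what the direct appeal to Lemma~\ref{lemm-euler} accomplishes without any contractions.
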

\begin{proof}
First, suppose that $I'=I$, and that
$$\sum_{i\in I'}\chi(E_i^o)> 0.$$ Then $\mathscr{C}_s$ is a tree of rational
curves, by Lemma \ref{lemma-tree}. By Lemma \ref{lemm-euler}, we
find that $\chi(C)$ is at least $2d>2$, which is impossible.

Hence, we may assume that there exists an index $a\in I'$ such
that $E_a$ meets a component of $\mathscr{C}_s$ whose multiplicity
is not divisible by $d$. By \eqref{eq1}, the intersection of $E_a$
with
$$\cup_{i\in I\setminus I_d}E_i$$ contains at least two points. For each $i\in I'$, we put
$$\overline{E}_i^o=E_i\setminus (\cup_{j\in I'\setminus
\{i\}}E_j).$$ Then
$$\sum_{i\in I'}\chi(\overline{E}^o_i)\leq 2$$ by
Lemma \ref{lemma-tree}, and since $\cup_{i\in I'}E_i^o$ is an open
subset of $\cup_{i\in I'}\overline{E}_i^o$ whose complement
contains at least two points, we find
$$\sum_{i\in I'}\chi(E_i^o)\leq 0.$$
\end{proof}
\begin{prop}\label{prop-poly}
The rational function
$$Q_C(\vart)=(\vart-1)^2\prod_{i\in I}(\vart^{N_i}-1)^{-\chi(E_i^o)}$$
is a polynomial in $\Z[\vart]$. It is divisible by the
characteristic polynomial $P_{C}(\vart)$ of the tame monodromy
operator $\varphi$ on $H^1(C\times_K K^t,\Q_\ell)$.
\end{prop}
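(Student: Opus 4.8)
The plan is to expand both $Q_C(\vart)$ and $P_C(\vart)$ into products of cyclotomic polynomials and to deduce everything from a comparison of exponents. Using $\vart^{N}-1=\prod_{d\mid N}\Phi_d(\vart)$, and noting that $\Phi_d$ divides $\vart^{N_i}-1$ precisely when $d\mid N_i$ while $1\mid N_i$ for every $i$, I would first rewrite
$$Q_C(\vart)=\Phi_1(\vart)^{\,2-\sum_{i\in I}\chi(E_i^o)}\cdot\prod_{d\geq 2}\Phi_d(\vart)^{\,-\sum_{i\in I_d}\chi(E_i^o)},$$
with $I_d=\{i\in I\mid d\mid N_i\}$ as introduced at the beginning of the section (so $I_1=I$). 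In the same way, the formula \eqref{eq-charpol} for $P_C$ — together with the fact that $N_i'$ and $N_i$ have exactly the same prime-to-$p$ divisors, so that $\Phi_d\mid \vart^{N_i'}-1$ iff $d\mid N_i$ and $p\nmid d$ — gives
$$P_C(\vart)=\Phi_1(\vart)^{\,2-\sum_{i\in I}\chi(E_i^o)}\cdot\prod_{\substack{d\geq 2\\ p\nmid d}}\Phi_d(\vart)^{\,-\sum_{i\in I_d}\chi(E_i^o)}.$$

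Next I would show that all the exponents appearing in $Q_C$ are nonnegative, which gives $Q_C(\vart)\in\Z[\vart]$ at once. For each $d\geq 2$, I would split $\cup_{i\in I_d}E_i$ into connected components and apply Lemma \ref{lemm-pos} to each, concluding $\sum_{i\in I'}\chi(E_i^o)\leq 0$ for every component index set $I'$ and hence $-\sum_{i\in I_d}\chi(E_i^o)\geq 0$ (trivially so if $I_d=\emptyset$). For $d=1$ I would use that $\mathscr{C}_s=\cup_{i\in I}E_i$ is connected and that, when the index set in Lemma \ref{lemma-tree} is all of $I$, the curves $\overline{E}_i^o$ there are exactly the $E_i^o$; that lemma then bounds $\sum_{i\in I}\chi(E_i^o)\leq 2$, so the exponent of $\Phi_1$ is $\geq 0$. (The same positivity shows $P_C(\vart)\in\Z[\vart]$.)

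Finally, for the divisibility $P_C(\vart)\mid Q_C(\vart)$, I would simply divide the two expansions: the $\Phi_1$-factor and every $\Phi_d$-factor with $p\nmid d$ cancel, leaving
$$\frac{Q_C(\vart)}{P_C(\vart)}=\prod_{\substack{d\geq 2\\ p\mid d}}\Phi_d(\vart)^{\,-\sum_{i\in I_d}\chi(E_i^o)},$$
whose exponents are once more nonnegative by Lemma \ref{lemm-pos}; hence this quotient lies in $\Z[\vart]$, so $P_C$ divides $Q_C$ there. I do not expect a serious obstacle: the genuine content is the positivity of the exponents, and that has already been packaged into Lemmas \ref{lemma-tree} and \ref{lemm-pos}; the rest is bookkeeping with cyclotomic factorizations. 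The only point needing a word of care is the connectedness of $\mathscr{C}_s$ invoked for the $\Phi_1$-exponent, which follows from the properness of $\mathscr{C}$ over the henselian local ring $R$ together with the geometric connectedness of $C$.
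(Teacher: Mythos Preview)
Your argument is correct and follows essentially the same route as the paper's proof: expand $Q_C$ and $P_C$ into cyclotomic factors, bound the $\Phi_1$-exponent via Lemma~\ref{lemma-tree} and the $\Phi_d$-exponents for $d\geq 2$ via Lemma~\ref{lemm-pos}, then observe that $Q_C/P_C$ keeps only the $\Phi_d$ with $p\mid d$, whose exponents are again nonnegative. The only cosmetic differences are that the paper disposes of the case $p=0$ at the outset (where $P_C=Q_C$) and indexes the quotient as $\prod_{d>0}\Phi_{dp}(\vart)^{\cdots}$, and that you are more explicit about decomposing $\cup_{i\in I_d}E_i$ into connected components before invoking Lemma~\ref{lemm-pos}.
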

\begin{proof} By \eqref{eq-charpol}, we have $P_C(t)=Q_C(t)$ if
$p=0$, so that we may assume that $p>0$. The prime factorization
of $Q_C(\vart)$ is given by
$$Q_C(\vart)=(\vart-1)^{2-\sum_{i\in I}\chi(E_i^o)}\prod_{d>1}\Phi_d(\vart)^{-\sum_{i\in I_d}\chi(E_i^o)}.$$
It follows from Lemma \ref{lemma-tree} that
$$2-\sum_{i\in I}\chi(E_i^o)\geq 0.$$
By Lemma \ref{lemm-pos}, we know that that $$-\sum_{i\in
I_d}\chi(E_i^o)\geq 0$$ for all $d>1$. It follows that
$Q_C(\vart)$ is a polynomial. By formula \eqref{eq-charpol}, we
have
$$\frac{Q_C(\vart)}{P_C(\vart)}=\prod_{d>0}\Phi_{dp}(\vart)^{-\sum_{i\in
I_{dp}}\chi(E_i^o)}$$ which is a polynomial by Lemma
\ref{lemm-pos}.
\end{proof}
%
\begin{cor}\label{cor-tamecrit3}
The following are equivalent: \begin{enumerate} \item the curve
$C$ is cohomologically tame, \item we have
$$\sum_{i\in I}N_i\chi(E_i^o)=\sum_{i\in I}N'_i\chi(E_i^o),$$
\item we have
$$P_C(\vart)=(\vart-1)^2\prod_{i\in I}(\vart^{N'_i}-1)^{-\chi(E_i^o)}=(\vart-1)^2\prod_{i\in I}(\vart^{N_i}-1)^{-\chi(E_i^o)}.$$
\end{enumerate}
\end{cor}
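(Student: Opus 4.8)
The plan is to deduce $(1)\Leftrightarrow(2)$ from the two Euler-characteristic formulas already at our disposal, and then $(2)\Leftrightarrow(3)$ by a degree comparison between $P_C(\vart)$ and the polynomial $Q_C(\vart)=(\vart-1)^2\prod_{i\in I}(\vart^{N_i}-1)^{-\chi(E_i^o)}$ of Proposition \ref{prop-poly}. I would first note that the first equality in $(3)$ is precisely formula \eqref{eq-charpol}, so condition $(3)$ amounts to the single statement $P_C(\vart)=Q_C(\vart)$.

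For $(1)\Leftrightarrow(2)$ I would argue that, since the model $\mathscr{C}$ is proper and $C$ is geometrically irreducible of dimension one, Corollary \ref{cor-tameeuler} applied with $\mathscr{X}=\mathscr{C}$ and $X=C$ gives that $C$ is cohomologically tame if and only if $\chi(C)=\sum_{i\in I}N'_i\chi(E_i^o)$. Combining this with the identity $\chi(C)=\sum_{i\in I}N_i\chi(E_i^o)$ of Lemma \ref{lemm-euler} turns cohomological tameness into $\sum_{i\in I}N_i\chi(E_i^o)=\sum_{i\in I}N'_i\chi(E_i^o)$, which is $(2)$.

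For $(2)\Leftrightarrow(3)$ the plan is as follows. Both $P_C$ and $Q_C$ are monic: $P_C$ is a characteristic polynomial, and the leading coefficient of $Q_C$, read off from the formal product defining it, is $1$. By Proposition \ref{prop-poly}, $P_C$ divides $Q_C$ in $\Z[\vart]$, so $P_C=Q_C$ if and only if $\deg P_C=\deg Q_C$. Since $P_C$ and $Q_C$ are genuine polynomials, their degrees are read off from the defining products by the rule $\deg\big((\vart-1)^2\prod_i(\vart^{N_i}-1)^{c_i}\big)=2+\sum_i c_iN_i$ with $c_i\in\Z$; applying this to $Q_C$ and, via \eqref{eq-charpol}, to $P_C$, gives $\deg Q_C=2-\sum_{i\in I}N_i\chi(E_i^o)$ and $\deg P_C=2-\sum_{i\in I}N'_i\chi(E_i^o)$. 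Hence $\deg P_C=\deg Q_C$ is exactly condition $(2)$, which completes the argument.

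I do not expect a genuine obstacle, as the corollary is a formal distillation of Lemma \ref{lemm-euler}, Corollary \ref{cor-tameeuler}, Proposition \ref{prop-poly} and Lemma \ref{lemm-pos}; the delicate points are purely the polynomial bookkeeping: identifying $(3)$ with $P_C=Q_C$ via \eqref{eq-charpol}, checking monicity so that divisibility together with equality of degrees forces equality of polynomials, and observing that the degree of the formal product $\prod_i(\vart^{N_i}-1)^{c_i}$ computes the actual polynomial degree once the product is known to be a polynomial (this last point ultimately rests on Lemma \ref{lemm-pos}, through Proposition \ref{prop-poly}). An alternative to the degree count is to use the factorization $Q_C/P_C=\prod_{d>0}\Phi_{dp}(\vart)^{-\sum_{i\in I_{dp}}\chi(E_i^o)}$ with all exponents $\geq 0$ (Lemma \ref{lemm-pos}), so that $P_C=Q_C$ holds iff $\sum_{i\in I_{dp}}\chi(E_i^o)=0$ for every $d>0$, and then to use the identity $\sum_{e\mid N}\phi(e)=N$ to rewrite $\sum_{i\in I}N_i\chi(E_i^o)-\sum_{i\in I}N'_i\chi(E_i^o)=\sum_{p\mid e}\phi(e)\sum_{i\in I_e}\chi(E_i^o)$, a sum of non-positive terms (again Lemma \ref{lemm-pos}) that vanishes if and only if each $\sum_{i\in I_{dp}}\chi(E_i^o)$ does.
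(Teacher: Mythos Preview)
Your proposal is correct and follows essentially the same route as the paper: the equivalence $(1)\Leftrightarrow(2)$ via Corollary~\ref{cor-tameeuler} and Lemma~\ref{lemm-euler}, and the equivalence $(2)\Leftrightarrow(3)$ by observing that $P_C$ divides $Q_C$ (Proposition~\ref{prop-poly}), both are monic, and their degrees agree precisely when $(2)$ holds. Your write-up is more detailed than the paper's (explicitly identifying $(3)$ with $P_C=Q_C$ via \eqref{eq-charpol}, and offering the alternative cyclotomic-factor argument), but the underlying strategy is identical.
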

\begin{proof}
 The
equivalence of (1) and (2) follows from Corollary
\ref{cor-tameeuler} and Lemma \ref{lemm-euler}. Point (3) implies
(2) by comparing degrees. If (2) holds, then $P_C(\vart)$ and
$Q_C(\vart)$ are monic polynomials of the same degree, so they
coincide because $P_C(\vart)$ divides $Q_C(\vart)$ by Proposition
\ref{prop-poly}. Hence, (2) implies (3).
\end{proof}

\subsection{Tame models}
\begin{definition}
Let $d$ be an element of $\N$. We say that $\mathscr{C}$ is
$d$-tame if $I\neq I_d$ and, for each $i\in I_d$, we have
$\chi(E_i^o)=0$.
\end{definition}
In particular, $\mathscr{C}$ is always $0$-tame.
\begin{lemma}\label{lemm-dtame}
Let $d$ be an element of $\Z_{>1}$.
 Then $\mathscr{C}$ is $d$-tame iff for each $i\in I_d$, the
following properties hold:
\begin{itemize}
\item $E_i\cong \mathbb{P}^1_k$, \item $E_i\setminus E_i^o$
consists of precisely two points, \item if $E_i$ intersects $E_j$
with $j\in I\setminus \{i\}$, then $N_j$ is not divisible by $d$.
\end{itemize}
\end{lemma}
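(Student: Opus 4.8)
The plan is to translate the statement into two numerical invariants of each component $E_i$: its genus $g_i$ and the number $s_i=\#(E_i\setminus E_i^o)$ of points at which $E_i$ meets the other components of $\mathscr{C}_s$. Since $k$ is algebraically closed, each $E_i$ is a smooth proper connected $k$-curve (it is regular by Lemma \ref{lemm-sncdequiv}), so $\chi(E_i)=2-2g_i$; as $E_i^o$ is obtained from $E_i$ by deleting $s_i$ closed points, additivity of the Euler characteristic gives $\chi(E_i^o)=2-2g_i-s_i$. Because $\mathscr{C}$ has pure dimension two, at most two components pass through any point, so $s_i=\sum_{j\in I\setminus\{i\}}(E_i\cdot E_j)$ and the points cut out by distinct $E_j$ are distinct. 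Consequently $\chi(E_i^o)=0$ is equivalent to $2g_i+s_i=2$, i.e.\ to $(g_i,s_i)=(0,2)$ or $(g_i,s_i)=(1,0)$.

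For the direction ``$d$-tame $\Rightarrow$ (three properties)'', fix $a\in I_d$ and let $I'\subseteq I_d$ index the connected component of $\bigcup_{k\in I_d}E_k$ containing $E_a$. I claim $I'=\{a\}$. Suppose $|I'|\ge 2$. Since $I_d\ne I$ and $\mathscr{C}_s$ is connected, some $E_{a'}$ with $a'\in I'$ meets a component $E_{j_0}$ with $j_0\notin I'$, and then $j_0\notin I_d$ because $I'$ is a connected component of $\bigcup_{k\in I_d}E_k$. Reducing the relation $\sum_{k\in I}N_k(E_{a'}\cdot E_k)=0$ of \eqref{eq1} modulo $d$ kills all terms with $k\in I_d$, so $\sum_{k\notin I_d}N_k(E_{a'}\cdot E_k)\equiv 0\pmod d$; this sum is strictly positive (it contains $N_{j_0}(E_{a'}\cdot E_{j_0})$), hence $\ge d\ge 2$, and it cannot arise from a single transversal intersection point, since $N_{j_0}\not\equiv 0\pmod d$. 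Thus $E_{a'}$ meets components outside $I_d$ in at least two distinct points. On the other hand $|I'|\ge 2$ and connectedness of $\bigcup_{k\in I'}E_k$ force $E_{a'}$ to meet at least one component $E_b$ with $b\in I'$, in a point different from the previous ones. Hence $s_{a'}\ge 3$ and $\chi(E^o_{a'})\le 2-2g_{a'}-3<0$, contradicting $d$-tameness. So $I'=\{a\}$ for every $a\in I_d$, which is exactly the third bulleted property. In particular $E_i$ (for a fixed $i\in I_d$) meets only components outside $I_d$, so by the first paragraph $\chi(E_i^o)=0$ gives $(g_i,s_i)=(0,2)$ or $(1,0)$; the latter makes $E_i$ an isolated component of $\mathscr{C}_s$, whence $I=\{i\}=I_d$, contradicting $I\ne I_d$. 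Therefore $g_i=0$, i.e.\ $E_i\cong\mathbb{P}^1_k$, and $s_i=2$, giving the first two properties.

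For the converse, assume the three properties hold for every $i\in I_d$. The second property and the first paragraph give $\chi(E_i^o)=2-0-2=0$ for all $i\in I_d$, so it only remains to verify $I\ne I_d$. If $I=I_d$ with $|I|=1$, say $I=\{i\}$, then $E_i\setminus E_i^o=\emptyset$, contradicting the second property; and if $I=I_d$ with $|I|\ge 2$, connectedness of $\mathscr{C}_s$ yields some $E_i$ meeting some $E_j$ with $j\ne i$ and $i,j\in I=I_d$, contradicting the third property. Hence $I\ne I_d$ and $\mathscr{C}$ is $d$-tame. The one delicate step is the modulo-$d$ counting showing that a component of $I_d$ touching the complement of $I_d$ must do so through at least two points (the same mechanism underlies the proof of Lemma \ref{lemm-pos}); this is precisely what forces $\chi(E^o_{a'})$ to become strictly negative unless $I_d$ consists of pairwise non-intersecting components, and everything else is bookkeeping with \eqref{eq1} and additivity of $\chi$.
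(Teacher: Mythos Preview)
Your proof is correct and takes a genuinely different route from the paper's argument. The paper handles the non-trivial direction by first showing (from $\chi(E_\alpha^o)=0$ and $I\neq I_d$) that each $E_\alpha$ with $\alpha\in I_d$ is rational with exactly two boundary points, and then splits into cases according to whether these two points lie on one or two neighbouring components; in each case it chases along a chain of components in $I_d$ until it forces $I=I_d$, a contradiction. Your argument instead works globally: you look at the connected component $I'$ of $\bigcup_{k\in I_d}E_k$ containing $E_a$ and use the congruence $\sum_{k\notin I_d}N_k(E_{a'}\cdot E_k)\equiv 0\pmod d$ coming from \eqref{eq1} to see that any component of $I'$ touching the outside must do so in at least two points, which together with one further intersection inside $I'$ violates $\chi(E_{a'}^o)=0$. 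This neatly shows $|I'|=1$ in one stroke and avoids the case distinction and the chain-chasing. Your treatment of the ``if'' direction is also more complete than the paper's, which declares it trivial without verifying $I\neq I_d$; your argument that $I=I_d$ would contradict either the second or the third bulleted property (depending on $|I|$) fills that in. One cosmetic remark: where you write ``the second property and the first paragraph give $\chi(E_i^o)=2-0-2=0$'', you are of course also using the first bulleted property to get $g_i=0$; it would read more cleanly to say so explicitly.
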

\begin{proof}
The ``if'' part is trivial, so let us prove the converse
implication. Assume that $\mathscr{C}$ is $d$-tame, and let
$\alpha$ be an element of $I$ such that $d|N_\alpha$. We know that
$\chi(E_\alpha^o)=0$. So either $E_\alpha=E_\alpha^o$ and
$E_\alpha$ is an elliptic curve, or $E_\alpha$ is a rational curve
and $E_\alpha\setminus E_\alpha^o$ consists of precisely two
points. The first possibility cannot occur, since it would imply
that $I=I_d=\{\alpha\}$.

Assume that $E_\alpha$ meets precisely one other component
$E_\beta$ of $\mathscr{C}_s$. Then $E_\alpha\cap E_\beta$ consists
of exactly two points. Assume that $d|N_\beta$. Since
$\mathscr{C}$ is $d$-tame, we see that $E_\beta$ is rational, and
that $E_\beta$ meets no other components of $\mathscr{C}_s$.
Hence, $I=\{\alpha,\beta\}$. This contradicts the fact that $I\neq
I_d$.

So we may assume that $E_\alpha$ meets precisely two other
components $E_\beta$ and $E_\gamma$ of $\mathscr{C}_s$, each of
them in exactly one point. It suffices to show that $d\nmid
N_\beta$ and $d\nmid N_\gamma$. If $d|N_\beta$, then $d|N_\gamma$
by equation \eqref{eq1} (applied to $j=\alpha$). Repeating the
arguments (with $\alpha$ replaced by $\beta$, resp. $\gamma$), we
find that $\mathscr{C}_s$ is a loop of rational curves, and that
$d|N_i$ for each $i\in I$. This contradicts the $d$-tameness of
$\mathscr{C}$.
\end{proof}

\begin{theorem}\label{thm-primed}
We fix an integer $d>1$. Assume that $\mathscr{C}$ is a relatively
minimal $sncd$-model of $C$. If $I\neq I_d$, then the following
are equivalent:
\begin{enumerate}
\item the polynomial
$$Q_C(\vart)=(\vart-1)^2\prod_{i\in I}(\vart^{N_i}-1)^{-\chi(E_i^o)}\in \Z[\vart]$$
has no root whose order in $\mathbb{G}_m(\Q^a)$ is divisible by
$d$, \item $\mathscr{C}$ is $d$-tame.
\end{enumerate}
Moreover, if $I=I_d$ and (1) holds, then $g=1$.
\end{theorem}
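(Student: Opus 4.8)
First I would rephrase (1) as a numerical condition on the Euler characteristics $\chi(E_i^o)$. Recall from the proof of Proposition~\ref{prop-poly} that
\[Q_C(\vart)=(\vart-1)^{\,2-\sum_{i\in I}\chi(E_i^o)}\prod_{e>1}\Phi_e(\vart)^{\,-\sum_{i\in I_e}\chi(E_i^o)},\]
with every exponent $\geq 0$ (that is exactly what Proposition~\ref{prop-poly}, through Lemma~\ref{lemm-pos}, establishes). For $e>1$ the roots of $Q_C$ of order $e$ are the primitive $e$-th roots of unity, occurring with multiplicity $-\sum_{i\in I_e}\chi(E_i^o)$, and every multiple of $d$ exceeds $1$; so (1) is equivalent to the vanishing condition $(\ast)$: $\sum_{i\in I_e}\chi(E_i^o)=0$ for every multiple $e$ of $d$. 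This makes $(2)\Rightarrow(1)$ immediate, since $I_e\subseteq I_d$ and, when $\mathscr{C}$ is $d$-tame, $\chi(E_i^o)=0$ for all $i\in I_d$.

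For $(1)\Rightarrow(2)$ (with $I\neq I_d$) I would argue by contradiction. Suppose $(\ast)$ holds but $\chi(E_{i_0}^o)\neq 0$ for some $i_0\in I_d$; pick such an $i_0$ with $N_{i_0}$ maximal and put $e_0:=N_{i_0}>1$. Then $e_0$ is a multiple of $d$ with $I_{e_0}\subseteq I_d\subsetneq I$, so by $(\ast)$ and Lemma~\ref{lemm-pos} every connected component of $\bigcup_{i\in I_{e_0}}E_i$ has $\sum\chi(E_i^o)=0$. Re-examining the proof of Lemma~\ref{lemm-pos} in the equality case (through Lemma~\ref{lemma-tree}), and applying \eqref{eq1} reduced modulo $e_0$ to each such component — if one of its members met $\mathscr{C}_s$ outside the component in a single point, the multiplicity of the component of $\mathscr{C}_s$ through that point would be forced divisible by $e_0$, which is impossible — I expect to get: the component $T_0$ containing $E_{i_0}$ is a tree of rational curves, exactly one of whose members $E_{a_0}$ meets $\mathscr{C}_s\setminus T_0$, and it does so in exactly two transverse points, while every other member of $T_0$ meets $\mathscr{C}_s$ only inside $T_0$. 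Writing $\delta_i$ for the number of members of $T_0$ met by $E_i$, this forces $\chi(E_i^o)=2-\delta_i$ for $i\neq a_0$ and $\chi(E_{a_0}^o)=-\delta_{a_0}$; since $\chi(E_{i_0}^o)\neq 0$ we get $|T_0|\geq 2$, so $T_0$ has a leaf $E_\ell\neq E_{a_0}$.

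Such an $E_\ell$ meets $\mathscr{C}_s$ in a single point, on its $T_0$-neighbour $E_m$, so \eqref{eq1} gives $N_\ell(E_\ell\cdot E_\ell)+N_m=0$. If $E_\ell\cdot E_\ell=-1$, then $E_\ell$ is a $(-1)$-curve meeting the rest of $\mathscr{C}_s$ in one point, and contracting it produces a regular $sncd$-model (Castelnuovo's criterion together with \eqref{eq1}; cf.\ the surface arguments in \cite[\S9.3]{liu}), contradicting relative minimality of $\mathscr{C}$. Hence $E_\ell\cdot E_\ell\leq -2$, so $N_m\geq 2N_\ell\geq 2e_0>N_{i_0}$; since $E_m$ is among the $E_i$ with $i\in I_d$, maximality of $N_{i_0}$ forces $\chi(E_m^o)=0$, whence $E_m\neq E_{a_0}$ and $E_m$ meets exactly two members $E_\ell,E_{m'}$ of $T_0$, transversally, and nothing else. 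Then \eqref{eq1} for $E_m$ reads $N_\ell+N_m(E_m\cdot E_m)+N_{m'}=0$; if $E_m\cdot E_m=-1$ then $E_m$ is a $(-1)$-curve meeting exactly two curves transversally, again contractible within $sncd$-models, a contradiction; otherwise $N_{m'}\geq 2N_m-N_\ell>N_m$, and one repeats the argument with $E_{m'}$ in place of $E_m$. Since at each stage either a $(-1)$-curve appears (contradicting relative minimality) or the multiplicity strictly increases along a simple path in the finite tree $T_0$, one reaches a contradiction; hence $\chi(E_i^o)=0$ for every $i\in I_d$, which with $I\neq I_d$ is (2). The delicate points — and the main obstacle — are extracting the precise ``tree with one exit component'' shape of $T_0$ from the equality case of Lemma~\ref{lemm-pos}, and checking carefully that the $(-1)$-curves that arise may be contracted \emph{within} the class of $sncd$-models, so that relative minimality is genuinely contradicted.

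Finally, if $I=I_d$ then $d\mid N_i$ for all $i$ and $(\ast)$ holds. Put $f(n)=\sum_{i:\,N_i=n}\chi(E_i^o)$, so that $\sum_{i\in I_e}\chi(E_i^o)=\sum_{e\mid n}f(n)$ for every $e$; M\"obius inversion over the divisibility poset gives, for $n$ divisible by $d$, $f(n)=\sum_{n\mid e}\mu(e/n)\sum_{i\in I_e}\chi(E_i^o)=0$ by $(\ast)$ (every $e$ with $n\mid e$ is then divisible by $d$), while $f(n)=0$ trivially for $d\nmid n$. Hence, by Lemma~\ref{lemm-euler}, $\chi(C)=\sum_{i\in I}N_i\chi(E_i^o)=\sum_n n\,f(n)=0$, so $g=1$.
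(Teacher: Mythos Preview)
Your proposal is correct and follows the same core strategy as the paper: derive from (1) the vanishing $\sum_{i\in I_e}\chi(E_i^o)=0$ for every multiple $e$ of $d$, then argue by contradiction via a chain of components with strictly increasing multiplicities, using relative minimality (Castelnuovo together with Proposition~\ref{prop-liucor}) to exclude $(-1)$-curves at each step. Your treatment of the case $I=I_d$ via M\"obius inversion and Lemma~\ref{lemm-euler} is exactly the paper's ``taking linear combinations'' to obtain~\eqref{eq-isdn}.

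The one place where the paper is noticeably slicker is the starting point of the chain. You set $e_0=N_{i_0}$ and then work hard (the part you flag as ``the main obstacle'') to extract the ``tree with one exit component'' shape of the connected component $T_0$ of $\bigcup_{i\in I_{e_0}}E_i$ from the equality case of Lemma~\ref{lemm-pos}, in order to locate a leaf $E_\ell$ from which to launch the chain. This does go through: your parenthetical use of~\eqref{eq1} modulo $e_0$ shows that any member of $T_0$ with an exit point has at least two, so the equality $\sum_{i\in T_0}\chi(E_i^o)=0$ forces $T_0$ to be a tree of rational curves with exactly two exit points, both on a single component. The paper bypasses all of this by first deducing from~$(\ast)$ the stronger vanishing $\sum_{N_i=e}\chi(E_i^o)=0$ for each multiple $e$ of $d$ (your M\"obius step, used earlier rather than only at the end). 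Then among the $i\in I_d$ with $\chi(E_i^o)\neq 0$ and $N_i$ maximal one may choose $\alpha$ with $\chi(E_\alpha^o)>0$; this $E_\alpha$ is already rational with a single neighbour, and the chain begins immediately. So the two arguments are the same in spirit, but the paper's early use of the ``$N_i=e$'' vanishing replaces your structural analysis of $T_0$ by a one-line reduction. Your concern about contracting $(-1)$-curves within the class of $sncd$-models is exactly what Proposition~\ref{prop-liucor}(3) provides.
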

\begin{proof}
It is obvious that (2) implies (1). Assume, conversely, that (1)
holds. For each integer $m>0$, we denote by $I_{=m}$ the subset of
$I$ consisting of the indices $i$ with $N_i=m$. By our assumption,
we have
\begin{equation}\label{eq-divdn}\sum_{i\in
I_{dn}}\chi(E_i^o)=0\end{equation} for each $n\in \Z_{>0}$, since
this is the exponent of the cyclotomic polynomial
$\Phi_{dn}(\vart)$ in the prime factorization of $Q_C(\vart)$.
Taking linear combinations of these equations, we see that
\begin{equation}\label{eq-isdn} \sum_{i\in
I_{=dn}}\chi(E_i^o)=0\end{equation} for each $n\in \Z_{>0}$.
 In particular, if $I=I_d$, Lemma \ref{lemm-euler} implies that
$g=1$. Hence, we may assume that $I\neq I_d$.

Suppose that $\mathscr{C}$ is not $d$-tame. Then there exists an
index $\alpha\in I$ such that $d|N_\alpha$ and
$\chi(E_\alpha^o)\neq 0$. We choose such an $\alpha$ with maximal
$N_\alpha$. By \eqref{eq-isdn}, we may assume that
$\chi(E_\alpha^o)> 0$. Then $E_\alpha$ is rational, and $E_\alpha$
meets exactly one other component $E_{\beta}$ of $\mathscr{C}_s$,
in precisely one point. By \eqref{eq1}, we know that
$N_{\beta}=\kappa_\alpha N_\alpha$. Since $\mathscr{C}$ is
relatively minimal, $\kappa_\alpha$ must be at least $2$, by
Castelnuevo's criterion and Proposition \ref{prop-liucor}. It
follows that $\beta$ belongs to $I_d$, and that $N_{\beta}\geq
2N_\alpha$. By maximality of $N_{\alpha}$, we must have
$\chi(E_{\beta}^o)= 0$.
 Hence,
$E_{\beta}$ is rational and meets precisely one component
$E_{\gamma}$ of $\mathscr{C}_s$ distinct from $E_{\alpha}$, in
exactly one point. Again applying \eqref{eq1}, Castelnuevo's
criterion and Proposition \ref{prop-liucor}, we find that
$$\kappa_{\beta}N_\beta=N_\alpha+N_\gamma$$
and $\kappa_\beta\geq 2$, so that $d|N_\gamma$ and
$N_\gamma>N_\beta$. Repeating the arguments, we produce an
infinite chain of rational curves in $\mathscr{C}_s$, which is
impossible.
\end{proof}
\begin{cor}\label{cor-primed}
Assume that $\mathscr{C}$ is a relatively minimal $sncd$-model of
$C$, and that $C$ is cohomologically tame. Suppose either that
$g\neq 1$ or that $C$ is an elliptic curve. Then for each integer
$d>1$, the following are equivalent: \begin{enumerate}\item the
model $\mathscr{C}$ is $d$-tame, \item $\varphi$ has no eigenvalue
on $H^1(C\times_K K^t,\Q_\ell)$ whose order in
$\mathbb{G}_m(\Q_\ell^a)$ is divisible by $d$.
\end{enumerate}
In particular, $\mathscr{C}$ is $p$-tame.
\end{cor}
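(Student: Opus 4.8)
The plan is to deduce this corollary formally from Theorem \ref{thm-primed}, the point being that cohomological tameness identifies $P_C(\vart)$ with $Q_C(\vart)$. First I would invoke Corollary \ref{cor-tamecrit3}: since $C$ is cohomologically tame, $P_C(\vart)=Q_C(\vart)$. Combined with \eqref{eq-charpol}, this says that the multiset of roots of $Q_C(\vart)$ coincides with the multiset of eigenvalues of $\varphi$ on $H^1(C\times_K K^t,\Q_\ell)$. All of these roots are roots of unity, hence lie in $\Q^a\subseteq\Q_\ell^a$ and have the same multiplicative order in $\GG_m(\Q^a)$ as in $\GG_m(\Q_\ell^a)$. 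So condition (2) of the corollary is literally the assertion that $Q_C(\vart)$ has no root whose order is divisible by $d$, which is precisely condition (1) of Theorem \ref{thm-primed}.

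Next I would split into two cases according to whether $I=I_d$. If $I\neq I_d$, Theorem \ref{thm-primed} gives directly that its condition (1) is equivalent to $\mathscr{C}$ being $d$-tame, i.e. to condition (1) of the corollary, and together with the previous step this yields (1)$\iff$(2). If $I=I_d$, then $\mathscr{C}$ is not $d$-tame by the very definition of $d$-tameness, so condition (1) of the corollary fails, and I would check that condition (2) fails as well. Here the hypothesis enters: if $C$ is an elliptic curve, then $C(K)\neq\emptyset$, so by Corollary \ref{cor-ratpoint} some $N_\alpha=1$, which contradicts $I=I_d$ since $d>1$; hence this subcase cannot occur. Otherwise $g\neq 1$ by hypothesis, and if (2) held it would say $Q_C(\vart)$ has no root of order divisible by $d$, so the final clause of Theorem \ref{thm-primed} (applicable since $I=I_d$) would force $g=1$, a contradiction. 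Thus when $I=I_d$ both conditions of the corollary fail and are therefore vacuously equivalent.

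Finally, for the assertion that $\mathscr{C}$ is $p$-tame: if $p=0$ this is automatic, and if $p>0$ I would apply the equivalence just established with $d=p>1$. By \eqref{eq-charpol} together with $P_C=Q_C$, every eigenvalue of $\varphi$ on $H^1(C\times_K K^t,\Q_\ell)$ is a root of $(\vart-1)^2\prod_{i\in I}(\vart^{N'_i}-1)^{-\chi(E_i^o)}$, hence has order dividing some $N'_i$, which is prime to $p$; so condition (2) holds automatically for $d=p$, and therefore $\mathscr{C}$ is $p$-tame.

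I do not expect a genuinely hard step here: once $P_C=Q_C$ is in hand, the corollary is essentially a repackaging of Theorem \ref{thm-primed}. The only point that demands care is the degenerate case $I=I_d$, where one must verify that the dichotomy in the hypothesis ($g\neq 1$, or $C$ an elliptic curve) is exactly what prevents $I=I_d$ from coexisting with a monodromy operator having no eigenvalue of order divisible by $d$; everything else is bookkeeping, in particular the identification of orders in $\GG_m(\Q^a)$ and $\GG_m(\Q_\ell^a)$, which is trivial since all the relevant elements are roots of unity.
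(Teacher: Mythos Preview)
Your proposal is correct and follows essentially the same route as the paper: identify $P_C=Q_C$ via Corollary~\ref{cor-tamecrit3}, reduce to Theorem~\ref{thm-primed} when $I\neq I_d$, and rule out $I=I_d$ using the hypothesis together with Corollary~\ref{cor-ratpoint}. The only minor difference is in the final step: the paper deduces that eigenvalues of $\varphi$ have order prime to $p$ from the fact that $G(K^t/K)$ has trivial pro-$p$ part, whereas you read it off directly from the explicit shape of $P_C(\vart)$ in \eqref{eq-charpol}; both are valid.
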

\begin{proof}
By Corollary \ref{cor-tamecrit3}, $Q_C(\vart)=P_C(\vart)$.
Clearly, property (1) cannot hold if $I=I_d$. Neither can (2): by
Theorem \ref{thm-primed}, the conjunction of (2) and $I=I_d$ would
imply that $g=1$, so that $C$ would have a rational point, by our
assumptions. This contradicts $I=I_d$, by Corollary
\ref{cor-ratpoint}. Hence, we may assume that $I\neq I_d$. From
Theorem \ref{thm-primed}, we get the equivalence of (1) and (2).
Then $p$-tameness follows from the fact that the pro-$p$-part of
$G(K^t/K)$ is trivial, so that the order of an eigenvalue of
$\varphi$ in $\mathbb{G}_m(\Q_\ell^a)$ cannot be divisible by $p$.
\end{proof}

\subsection{Saito's criterion for cohomological tameness}

\begin{definition}
We denote by $Jac(C)$ the Jacobian of $C$. We say that $C$ is
pseudo-wild if $g=1$,  $C(K^t)$ is empty, and one of the following
holds:
\begin{itemize}
\item $p>3$, \item $p=2$, and $Jac(C)$ has reduction type $I_n\
(n\geq 0)$ , $IV$ or $IV^*$, \item $p=3$ and $Jac(C)$ has
reduction type $I_n$, $I^*_n$ $(n\geq 0)$, $III$, or $III^*$.
\end{itemize}
\end{definition}

\begin{theorem}[Saito's criterion for cohomological
tameness 
]\label{thm-saito} Assume that $\mathscr{C}$
is a relatively minimal $sncd$-model of $C$. The following are
equivalent:
\begin{enumerate}
\item the curve $C$ is cohomologically tame,
\item one of the following two conditions is
satisfied:\begin{itemize} \item $\mathscr{C}$ is $p$-tame, \item
$C$ is pseudo-wild.
\end{itemize}
\end{enumerate}
In (2), the two conditions are disjoint, i.e., at most one of them
can hold.
\end{theorem}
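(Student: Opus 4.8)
The plan is to reduce the statement to the numerical criterion of Corollary~\ref{cor-tamecrit3} and then feed it into Theorem~\ref{thm-primed}. First I would dispose of the case $p=0$: there $N'_i=N_i$ for every $i$, so $C$ is cohomologically tame by Corollary~\ref{cor-tamecrit3}, $\mathscr{C}$ is $0$-tame, $C$ cannot be pseudo-wild, and the disjointness claim is vacuous. Assume now $p>0$. By Corollary~\ref{cor-tamecrit3}, (1) is equivalent to $P_C(\vart)=Q_C(\vart)$, and by the computation carried out at the end of the proof of Proposition~\ref{prop-poly} one has $Q_C/P_C=\prod_{d>0}\Phi_{dp}(\vart)^{-\sum_{i\in I_{dp}}\chi(E_i^o)}$, whose exponents are all $\geq 0$ by Lemma~\ref{lemm-pos}. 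So I would record the working reformulation: \emph{(1) holds if and only if $\sum_{i\in I_{dp}}\chi(E_i^o)=0$ for all $d>0$, equivalently if and only if $Q_C$ has no root whose order in $\mathbb{G}_m(\Q^a)$ is divisible by $p$.}

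Next I would split according to whether $I=I_p$. If $I\neq I_p$, then $C(K^t)\neq\emptyset$ by Corollary~\ref{cor-ratpoint}, hence $C$ is not pseudo-wild and (2) just asserts that $\mathscr{C}$ is $p$-tame; Theorem~\ref{thm-primed} applied with $d=p$ then says exactly that ``$\mathscr{C}$ is $p$-tame'' is equivalent to ``$Q_C$ has no root of order divisible by $p$'', i.e.\ to (1). If instead $I=I_p$, then $C(K^t)=\emptyset$ by Corollary~\ref{cor-ratpoint}, so $\mathscr{C}$ is not $p$-tame and (2) reduces to ``$C$ is pseudo-wild''. Assuming (1), the last assertion of Theorem~\ref{thm-primed} (applied with $d=p$, so that $I=I_d$ and $Q_C$ has no root of order divisible by $p$) forces $g=1$, and together with $C(K^t)=\emptyset$ this gives the first two clauses of the definition of pseudo-wildness. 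I would settle the disjointness assertion here too: if $\mathscr{C}$ is $p$-tame then $I\neq I_p$, so $C(K^t)\neq\emptyset$ and $C$ is not pseudo-wild.

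It remains, in the case $I=I_p$ and $g=1$, to match the reduction type of $Jac(C)$ with the trichotomy in the definition of pseudo-wildness. Since $C$ has genus one, $H^1(C\times_K K^s,\Q_\ell)\cong H^1(Jac(C)\times_K K^s,\Q_\ell)$ as $G(K^s/K)$-modules, and $P$ acts trivially on the degree $0$ and degree $2$ cohomology of any curve, so $C$ is cohomologically tame if and only if $E=Jac(C)$ is. For the elliptic curve $E$ this is in turn equivalent to $E$ acquiring semi-stable reduction over a tamely ramified extension of $K$: the image of $P$ in $\mathrm{GL}(H^1(E\times_K K^s,\Q_\ell))$ is finite, so $P$ acts trivially exactly when $E$ becomes semi-stable over the fixed field of some open subgroup of $G(K^t/K)$. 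Finally I would invoke the standard reduction theory of elliptic curves (Kodaira's classification, as in \cite{saito}) to check that $E$ acquires semi-stable reduction over a tame extension precisely when its Kodaira type occurs in the relevant list: for $p\geq 5$ this holds for every type since the minimal such extension has degree in $\{1,2,3,4,6\}$, whereas for $p=2$ (resp.\ $p=3$) the minimal degree is divisible by $p$ exactly for the excluded types --- $2$ for $I_n^*$, $4$ for $III,III^*$, $6$ for $II,II^*$ when $p=2$; $3$ for $IV,IV^*$, $6$ for $II,II^*$ when $p=3$.

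I expect this last matching step to be the main obstacle. For $p\geq 5$ it is bookkeeping, but for $p=2,3$ one must control the wild part of the base change: one has to rule out that a curve of an ``excluded'' type nevertheless attains semi-stable reduction over a tame extension, and confirm that the ``listed'' types never require a wildly ramified extension. This is precisely where one leans on the fine structure of minimal $sncd$-models of genus-one curves (equivalently Kodaira's table together with the theory of Néron models). Everything else --- the elementary manipulations with $Q_C$ and $P_C$ and the two appeals to Theorem~\ref{thm-primed} --- is routine once that input is in place.
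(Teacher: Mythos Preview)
Your overall structure matches the paper's proof closely: both reduce to Corollary~\ref{cor-tamecrit3} and Theorem~\ref{thm-primed}, split according to whether $I=I_p$, and in the case $I=I_p$ pass to $E=Jac(C)$ via the isomorphism $H^1(C\times_K K^s,\Q_\ell)\cong H^1(E\times_K K^s,\Q_\ell)$. The disjointness argument and the case $I\neq I_p$ are handled essentially identically.

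The genuine difference is in how you characterize cohomological tameness of the elliptic curve $E=Jac(C)$ by Kodaira type. The paper applies Corollary~\ref{cor-primed} \emph{directly to $E$}: since $E$ is an elliptic curve the hypotheses are met, so $E$ is cohomologically tame if and only if the minimal $sncd$-model of $E$ is $p$-tame, and one then reads off from the Kodaira--N\'eron table which types have a $p$-tame $sncd$-model. You instead pass through the equivalence ``$E$ cohomologically tame $\Leftrightarrow$ $E$ acquires semi-stable reduction over a tame extension'' and then try to decide the latter by computing the degree of the minimal semi-stable extension from the Kodaira type.

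This detour has a real gap. For $p\in\{2,3\}$ the Kodaira type does \emph{not} determine the degree of the minimal extension over which $E$ becomes semi-stable; that degree is only given by the combinatorial formula of Corollary~\ref{cor-mindegree} \emph{under the hypothesis that $E$ is already cohomologically tame} (see also Counterexample~(5) at the end of the section). So your assertion that ``the minimal degree is divisible by $p$ exactly for the excluded types'' presupposes what you are trying to prove. What actually rules out the excluded types is not a degree computation but the observation that their minimal $sncd$-models are not $p$-tame --- which is exactly the paper's route via Corollary~\ref{cor-primed}. Once you replace your final step by that argument, the proof is complete and coincides with the paper's.
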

\begin{proof}
Let us first explain why the conditions in (2) are disjoint.
Assume that $C$ is pseudo-wild. By Corollary \ref{cor-ratpoint},
emptiness of $C(K^t)$ implies that $p|N_i$ for all $i\in I$.
Hence, $\mathscr{C}$ cannot be $p$-tame.

It follows immediately from Corollary \ref{cor-tamecrit3} that (2)
implies (1). It remains to show that (1) implies (2). By Corollary
\ref{cor-primed}, it suffices to consider the case where $g=1$ and
$C(K)$ is empty. We may also assume that $I=I_p$, since otherwise,
Theorem \ref{thm-primed} implies that $\mathscr{C}$ is $p$-tame.
%

 The equality $I=I_p$ implies that $C(K^t)$ is empty, by Corollary \ref{cor-ratpoint}. Since $C$ is cohomologically tame, the same holds for
its Jacobian $Jac(C)$. Looking at the Kodaira-N\'eron reduction
table, and applying Corollary \ref{cor-primed}, we see that an
elliptic $K$-curve $E$
 is cohomologically tame iff one of the following conditions
 holds:
 \begin{itemize}
\item $p>3$, \item $p=2$ and $E$ has reduction type $I_n\ (n\geq
0)$ , $IV$ or $IV^*$, \item $p=3$ and $E$ has reduction type
$I_n$, $I^*_n$ $(n\geq 0)$, $III$, or $III^*$.
\end{itemize}
Applying this criterion to the case $E=Jac(C)$, we find that $C$
is pseudo-wild.
\end{proof}
\begin{cor}
The following are equivalent:
\begin{enumerate}
\item $C$ is pseudo-wild, \item $C$ is cohomologically tame, and
$C(K^t)$ is empty.
\end{enumerate}
\end{cor}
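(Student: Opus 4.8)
The statement to prove is the corollary: $C$ is pseudo-wild $\iff$ $C$ is cohomologically tame and $C(K^t)$ is empty. This is essentially a repackaging of Theorem~\ref{thm-saito} together with the ``disjointness'' analysis already carried out there, so the plan is to deduce it directly from results in the previous subsection. First I would prove the forward implication. Suppose $C$ is pseudo-wild. Then by definition $g=1$ and $C(K^t)$ is empty, so it remains to show $C$ is cohomologically tame. Since $C(K^t)$ is empty, Corollary~\ref{cor-ratpoint} forces $p\mid N_i$ for every $i\in I$, i.e.\ $I=I_p$; in particular $\mathscr{C}$ is not $p$-tame. By Theorem~\ref{thm-saito} (the implication (2)$\Rightarrow$(1)), since the ``pseudo-wild'' alternative in condition~(2) holds, $C$ is cohomologically tame.

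\textbf{The reverse implication.} Now suppose $C$ is cohomologically tame and $C(K^t)=\emptyset$. By Theorem~\ref{thm-saito}, condition~(2) holds, so either $\mathscr{C}$ is $p$-tame or $C$ is pseudo-wild. But if $\mathscr{C}$ were $p$-tame, then by Lemma~\ref{lemm-dtame} (or more directly because $p$-tameness requires $I\neq I_p$, hence some $N_i$ with $p\nmid N_i$) Corollary~\ref{cor-ratpoint} would give $C(K^t)\neq\emptyset$, contradicting our hypothesis. Therefore $C$ is pseudo-wild. This is the whole argument; both directions are short once Theorem~\ref{thm-saito} and Corollary~\ref{cor-ratpoint} are in hand.

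\textbf{Where the real work sits.} There is essentially no obstacle in the corollary itself: it is a formal consequence of the machinery already developed. The substantive content — the equivalence between cohomological tameness and the dichotomy ``$p$-tame model or pseudo-wild'' — is Theorem~\ref{thm-saito}, whose proof in turn rests on Corollary~\ref{cor-primed}, Theorem~\ref{thm-primed}, and the Kodaira--Néron classification of reduction types of elliptic curves. If I were to isolate the one delicate point that makes the corollary clean, it is the observation (already recorded in the proof of Theorem~\ref{thm-saito}) that the two alternatives in condition~(2) are mutually exclusive precisely because $p$-tameness of $\mathscr{C}$ is equivalent to the existence of a component $E_\beta$ with $p\nmid N_\beta$, which by Corollary~\ref{cor-ratpoint} is equivalent to $C(K^t)\neq\emptyset$ — exactly the negation of one of the defining conditions of pseudo-wildness. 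So the corollary is really just the statement that, among the two disjoint cases of Theorem~\ref{thm-saito}(2), the pseudo-wild one is characterized by emptiness of $C(K^t)$.
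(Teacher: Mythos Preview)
Your proof is correct and follows essentially the same route as the paper: both directions are deduced from Theorem~\ref{thm-saito} together with Corollary~\ref{cor-ratpoint}, using that emptiness of $C(K^t)$ forces $I=I_p$ and hence rules out the $p$-tame alternative. The paper's version is terser but logically identical.
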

\begin{proof}
The implication $(1)\Rightarrow (2)$ follows from Theorem
\ref{thm-saito}. Conversely, assume that $(2)$ holds, and suppose
that $\mathscr{C}$ is a relatively minimal $sncd$-model of $C$.
Emptiness of $C(K^t)$ implies $I=I_p$, by Corollary
\ref{cor-ratpoint}, so that $\mathscr{C}$ is not $p$-tame. Hence,
by Theorem \ref{thm-saito}, $C$ is pseudo-wild.
\end{proof}

\subsection{The semi-stable reduction theorem}
Recall the following well-known lemma.
\begin{lemma}\label{lemm-finquot}
If the wild inertia $P$ acts continuously on a finite dimensional
$\Q_\ell$-vector space $V$, then the action factors through a
finite quotient of $P$.
\end{lemma}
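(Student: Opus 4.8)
The plan is to use the standard structure theory of compact $\ell$-adic matrix groups. If $p=0$ then $P$ is trivial and there is nothing to prove, so I assume $p>0$ and recall that $P$ is then a pro-$p$-group. Write $\rho\colon P\to \mathrm{GL}(V)$ for the given representation; after fixing a $\Q_\ell$-basis of $V$ we identify $\mathrm{GL}(V)$ with $\mathrm{GL}_n(\Q_\ell)$, where $n=\dim_{\Q_\ell}V$.

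First I would reduce to an integral representation. The image $\rho(P)$ is compact, being the continuous image of the compact group $P$, and a compact subgroup of $\mathrm{GL}_n(\Q_\ell)$ stabilizes a lattice: starting from any lattice $\Lambda_0\subset V$, the matrix entries of the elements of $\rho(P)$ are bounded, so $\rho(P)\cdot\Lambda_0\subseteq \ell^{-c}\Lambda_0$ for some $c\geq 0$; hence the $\Z_\ell$-submodule $\Lambda\subseteq V$ generated by $\rho(P)\cdot\Lambda_0$ satisfies $\Lambda_0\subseteq\Lambda\subseteq \ell^{-c}\Lambda_0$ and is therefore again a lattice, now stable under $\rho(P)$. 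Choosing a $\Z_\ell$-basis of $\Lambda$, we may assume that $\rho$ takes values in $\mathrm{GL}_n(\Z_\ell)$.

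Next I would invoke the congruence filtration on $\mathrm{GL}_n(\Z_\ell)$. The principal congruence subgroup $G_1=\ker(\mathrm{GL}_n(\Z_\ell)\to \mathrm{GL}_n(\F_\ell))$ is an open normal subgroup that is a pro-$\ell$-group: it is the inverse limit of the finite $\ell$-groups $G_1/G_m$, where $G_m=1+\ell^m M_n(\Z_\ell)$ and $G_m/G_{m+1}\cong M_n(\F_\ell)$. Put $N=\rho^{-1}(G_1)$. Since $\mathrm{GL}_n(\F_\ell)$ is finite, $N$ is an open — hence finite-index — normal subgroup of $P$, and as a closed subgroup of the pro-$p$-group $P$ it is itself pro-$p$.

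The key point is then that a continuous homomorphism from a pro-$p$-group to a pro-$\ell$-group with $p\neq\ell$ is trivial: its image is simultaneously a quotient of a pro-$p$-group and a closed subgroup of a pro-$\ell$-group, and a profinite group that is both pro-$p$ and pro-$\ell$ is trivial. Applying this to $\rho|_N\colon N\to G_1$ gives $N\subseteq\ker\rho$, so $\rho$ factors through the finite quotient $P/N$, as claimed. The only genuinely delicate step is the reduction to $\mathrm{GL}_n(\Z_\ell)$, that is, the construction of a $\rho(P)$-stable lattice; everything else is formal once the congruence filtration is in place.
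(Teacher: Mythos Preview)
Your proof is correct and follows essentially the same route as the paper: find a $P$-stable lattice, then use that the kernel of reduction modulo $\ell$ is pro-$\ell$ to conclude that the pro-$p$ image lands in the finite group $\mathrm{GL}_n(\F_\ell)$. The paper is terser (it cites \cite{serre-repres} for the existence of the stable lattice rather than constructing it by hand), but the argument is the same.
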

\begin{proof}
 Continuity of the action implies that $V$ admits a
 $\Z_\ell$-lattice $M$ that is stable under the action of $P$ \cite[\S1.1]{serre-repres}.
The image $P_0$ of $P$ in the automorphism group $\Aut(M)$ is a
pro-$p$-group, so that is has trivial intersection with the
pro-$\ell$-group
$$\ker(\Aut(M)\rightarrow \Aut(M/\ell M)).$$ It follows that $P_0$
is isomorphic to a subgroup of the finite group $\Aut(M/\ell M)$.
\end{proof}

\begin{theorem}[Cohomological criterion for semi-stable
reduction; Deligne - Mumford, Saito]\label{thm-sstable} Assume
that $\mathscr{D}$ is a relatively minimal $ncd$-model of $C$. We
assume that either $g\neq 1$, or $C$ is an elliptic curve. Then
$\mathscr{D}$ is semi-stable iff the monodromy action on
$$H^1(C\times_K K^s,\Q_\ell)$$ is unipotent.
\end{theorem}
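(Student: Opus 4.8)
The plan is to express both conditions in terms of the relatively minimal $sncd$-model $\mathscr{C}$ that one obtains from $\mathscr{D}$ by repeatedly blowing up the self-intersection points of the components of its special fiber (this is the minimal $sncd$-model, so in particular it is relatively minimal), and then to feed the outcome into the results of Section~\ref{sec-saito}. The case $g=0$ is settled at once: then $C\cong\mathbb{P}^1_K$, so $H^1(C\times_K K^s,\Q_\ell)=0$ is unipotent, while any relatively minimal $ncd$-model of $\mathbb{P}^1_K$ equals $\mathbb{P}^1_R$ (a $(-1)$-curve in the special fiber of such a model meets the remaining components in at most two points, hence can be contracted inside the category of $ncd$-models by Proposition~\ref{prop-liucor}(2)), which is smooth over $R$ and in particular semi-stable. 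So from now on I assume $g\geq 1$; then $\mathscr{D}$ and $\mathscr{C}$ are unique up to isomorphism.

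For the implication ``$\mathscr{D}$ semi-stable $\Rightarrow$ monodromy unipotent'', suppose every multiplicity of $\mathscr{D}_s$ equals one. By the local structure of the blow-up of a node (Proposition~\ref{prop-liucor}(1) and Lemma~\ref{lemm-nosncd}), every component of $\mathscr{C}_s$ then has multiplicity one or two, and each multiplicity-two component is an exceptional $\mathbb{P}^1_k$ meeting the rest of $\mathscr{C}_s$ in exactly two points, hence has $\chi(E_i^o)=0$. Thus $\mathscr{C}$ is $p$-tame (vacuously if $p$ is $0$ or odd, and by the displayed condition if $p=2$), so by Corollary~\ref{cor-tamecrit3} the curve $C$ is cohomologically tame and $P_C(\vart)=(\vart-1)^2\prod_{i\in I}(\vart^{N_i}-1)^{-\chi(E_i^o)}$; the multiplicity-two factors are trivial, so $P_C(\vart)$ is a power of $(\vart-1)$, and being a monic polynomial of degree $2g$ it equals $(\vart-1)^{2g}$. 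Hence $\varphi$ acts unipotently on $H^1(C\times_K K^t,\Q_\ell)=H^1(C\times_K K^s,\Q_\ell)$.

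For the converse, I would first note that unipotence forces the wild inertia $P$ to act trivially: $P$ acts through a finite $p$-group by Lemma~\ref{lemm-finquot}, and a unipotent matrix of finite order over $\Q_\ell$ is the identity. Hence $C$ is cohomologically tame with $H^1(C\times_K K^t,\Q_\ell)=H^1(C\times_K K^s,\Q_\ell)$; moreover $C$ is not pseudo-wild, since that would require $g=1$ and $C(K^t)=\emptyset$, whereas under our hypothesis any genus-one $C$ is an elliptic curve, which has a rational point. So Theorem~\ref{thm-saito} shows that $\mathscr{C}$ is $p$-tame. Unipotence gives $P_C(\vart)=(\vart-1)^{2g}$, so $Q_C(\vart)=(\vart-1)^{2g}$ by Corollary~\ref{cor-tamecrit3}; since $Q_C$ is a polynomial (Proposition~\ref{prop-poly}), comparing it with its factorization into cyclotomic polynomials yields $\sum_{i\in I_d}\chi(E_i^o)=0$ for every $d>1$. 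For each such $d$ one has $I\neq I_d$ — otherwise Theorem~\ref{thm-primed} would force $g=1$, hence $C$ elliptic, hence some $N_\alpha=1$ by Corollary~\ref{cor-ratpoint}, contradicting $d\mid N_\alpha$ — and then Theorem~\ref{thm-primed} shows $\mathscr{C}$ is $d$-tame. Thus $\mathscr{C}$ is $d$-tame for every integer $d\geq 2$.

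The main obstacle is the last step: deducing that $\mathscr{D}$ is semi-stable from the fact that $\mathscr{C}$ is $d$-tame for all $d\geq 2$. I would argue by contradiction. If some component of $\mathscr{D}_s$ has multiplicity $\geq 2$, choose one, say $\Gamma$, of maximal multiplicity $N\geq 2$, and let $E_\alpha\subset\mathscr{C}_s$ be its strict transform, so $N_\alpha=N$ and $\alpha\in I_N\neq\emptyset$. Since $\mathscr{C}$ is $N$-tame, Lemma~\ref{lemm-dtame} shows that $E_\alpha\cong\mathbb{P}^1_k$, that $E_\alpha\setminus E_\alpha^o$ consists of exactly two points, and that every component of $\mathscr{C}_s$ meeting $E_\alpha$ has multiplicity not divisible by $N$. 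In particular $\Gamma$ cannot have a self-intersection point (its blow-up would produce an adjacent exceptional curve of multiplicity $2N$), so $E_\alpha=\Gamma$, and — since the remaining self-intersection blow-ups occur away from $\Gamma$ — the components of $\mathscr{C}_s$ meeting $E_\alpha$ are exactly the strict transforms of the components of $\mathscr{D}_s$ meeting $\Gamma$, each of multiplicity $\leq N$. Relative minimality of $\mathscr{D}$ forces $-(\Gamma\cdot\Gamma)\geq 2$: otherwise, $\Gamma\cong\mathbb{P}^1_k$ with $\Gamma\cdot\Gamma=-1$ could be contracted to a smaller $ncd$-model by Castelnuovo's criterion together with Proposition~\ref{prop-liucor}(2), applicable because $\Gamma$ meets the other components in at most two points. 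Now \eqref{eq1} applied to $E_\alpha$ reads $-N(\Gamma\cdot\Gamma)=\sum_{E'\neq E_\alpha}N_{E'}(E_\alpha\cdot E')$, where the sum runs over the one or two neighbours, $\sum_{E'\neq E_\alpha}(E_\alpha\cdot E')=2$, and every $N_{E'}\leq N$; hence $-(\Gamma\cdot\Gamma)=2$ and $N_{E'}=N$ for every neighbour, contradicting that these multiplicities are not divisible by $N$. This bookkeeping of how multiplicities and intersection patterns change under the blow-ups relating $\mathscr{C}$ to $\mathscr{D}$ is the only genuinely combinatorial point; everything else is an assembly of the results of Section~\ref{sec-saito}.
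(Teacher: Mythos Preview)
Your proof is correct and arrives at the same intermediate conclusion as the paper, namely that $\mathscr{C}$ is $d$-tame for every $d>1$; the paper gets there in one stroke via Corollary~\ref{cor-primed} rather than your detour through Theorem~\ref{thm-saito} and Theorem~\ref{thm-primed}, but that is only a matter of economy. The genuine difference lies in the final step. The paper invokes \cite[5.1]{halle}: since $I\neq I_{>1}$, every component $E_j$ with $N_j>1$ lies on a chain of rational curves terminating at multiplicity-one components, and such a chain is necessarily contracted by the morphism $\mathscr{C}\to\mathscr{D}$; hence $\mathscr{D}_s$ is reduced. You replace this external reference by a direct contradiction argument: pick a component $\Gamma$ of $\mathscr{D}_s$ of maximal multiplicity $N\geq 2$, use $N$-tameness (Lemma~\ref{lemm-dtame}) to see that the neighbours of its strict transform in $\mathscr{C}$ have multiplicity not divisible by $N$, and then use relative minimality of $\mathscr{D}$ together with \eqref{eq1} to force those same multiplicities to equal $N$. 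Your route is more self-contained and avoids the citation; the paper's route makes the underlying geometric mechanism (contraction of the multiplicity-$>1$ chains) explicit. One small point you skip: when you write ``otherwise, $\Gamma\cdot\Gamma=-1$'', you implicitly use $(\Gamma\cdot\Gamma)<0$, which follows from negative semi-definiteness of the intersection form together with $I\neq I_N$ (so $\mathscr{D}_s$ has more than one component).
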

\begin{proof}
Let $\mathscr{C}$ be the relatively minimal $sncd$-model of $C$
obtained by blowing up the self-intersection points of the
irreducible components of $\mathscr{D}_s$. We write
$$\mathscr{C}_s=\sum_{i\in I}N_i E_i$$
as before.

If $\mathscr{D}$ is semi-stable, then $C$ is cohomologically tame,
by Corollary \ref{cor-tamecrit3}. The action of $\varphi$ on
$$H^1(C\times_K K^t,\Q_\ell)$$ is unipotent, by formula
\eqref{eq-charpol}. It follows that the monodromy action on
$$H^1(C\times_K K^s,\Q_\ell)$$ is unipotent.

Conversely, suppose that the monodromy action on $$H^1(C\times_K
K^s,\Q_\ell)$$ is unipotent. Then $C$ is cohomologically tame, by
Lemma \ref{lemm-finquot}. We denote by $I_{>1}$ the subset of $I$
consisting of indices $i$ with $N_i>1$. By Corollary
\ref{cor-primed}, we know that $\mathscr{C}$ is $d$-tame for all
$d>1$. In particular, $\chi(E_i^o)=0$ for each $i\in I_{>1}$.
Hence, by Lemma \ref{lemm-euler}, we must have $I\neq I_{>1}$ if
$g\neq 1$. If $g=1$, then we also have $I\neq I_{>1}$ by Corollary
\ref{cor-ratpoint}, since $C$ has a rational point.


Let $j$ be an element of $I_{>1}$.
 Then by Lemma \ref{lemm-dtame}, $E_{j}$ is
rational, and
 $E_{j}\setminus E_{j}^o$ consists of exactly two points.
 Since $I\neq I_{>1}$, the component $E_{j}$ fits into a
 sequence of components
$$E_{j_0},\ldots,E_{j_{a+1}}$$
satisfying the conditions of \cite[5.1]{halle}, so that  the
component $E_{j}$ is contracted by $h$. Hence, $\mathscr{D}$ is
semi-stable.
\end{proof}
\begin{cor}[Semi-stable reduction theorem; Deligne-Mumford]\label{cor-sstable}
There exists a finite separable extension $K'$ of $K$ such that
every relatively minimal $ncd$-model of $C\times_K K'$ is
semi-stable.
\end{cor}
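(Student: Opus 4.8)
The plan is to reduce the assertion to Theorem~\ref{thm-sstable}. It suffices to produce a finite separable extension $K'/K$ such that the curve $C':=C\times_K K'$ satisfies the hypothesis of that theorem (i.e.\ $g\neq 1$, or $C'$ is an elliptic curve) and such that the monodromy action on
$$H^1(C'\times_{K'}K^s,\Q_\ell)=H^1(C\times_K K^s,\Q_\ell),$$
restricted to $G(K^s/K')$, is unipotent. Indeed, a finite separable extension of $K$ is again a henselian discretely valued field with algebraically closed residue field, so Theorem~\ref{thm-sstable} applies to $C'/K'$; since the unipotence condition occurring there does not refer to any particular model, it would then follow at once that \emph{every} relatively minimal $ncd$-model of $C'$ is semi-stable (and at least one such model exists). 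The genus is insensitive to base change, so the first requirement is automatic when $g=0$ or $g\geq 2$. When $g=1$, I would first replace $K$ by a finite separable extension over which $C$ acquires a rational point --- one exists because $C$ is smooth over $K$, so that the closed points of $C$ with residue field separable over $K$ are dense (pull a suitable closed point back along an \'etale morphism from an open subscheme of $C$ to $\A^1_K$, using Lemma~\ref{lemm-para}) --- whereupon $C$ becomes an elliptic curve. So we may assume $g\neq 1$ or $C$ is elliptic, and the task reduces to arranging that the monodromy on $H^1$ becomes unipotent after a finite separable extension.

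This last point is Grothendieck's $\ell$-adic monodromy theorem (see \cite{sga7a}), but it can also be deduced from the results above. By Lemma~\ref{lemm-finquot} the action of the wild inertia $P$ on $V:=H^1(C\times_K K^s,\Q_\ell)$ factors through a finite quotient $P/P_1$, where $P_1$ is open in $P$ and normal in $G(K^s/K)$, and its fixed field $L:=(K^s)^{P_1}$ is a finite extension of $K^t$. Choosing a finite separable extension $K_1/K$ inside $K^s$ with $K_1K^t=L$ (adjoin the coefficients, and one root, of a minimal polynomial over $K^t$ of a primitive element of $L/K^t$), one gets $P_{K_1}=P\cap G(K^s/K_1)=G(K^s/L)=P_1$, so that $C_1:=C\times_K K_1$ is cohomologically tame and $V$ coincides with the tame cohomology module $H^1(C_1\times_{K_1}K_1^t,\Q_\ell)$ as a $G(K^s/K_1)$-module. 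Now I would pick an $sncd$-model of $C_1$ and apply formula~\eqref{eq-charpol}: the characteristic polynomial of the monodromy operator on this space is a product of cyclotomic factors, so all of its eigenvalues are roots of unity, and their orders divide a certain integer $e$ prime to $p$ (the least common multiple of the numbers $N'_i$ of that model); in particular the monodromy is quasi-unipotent. Passing to the tame extension $K'/K_1$ of degree $e$ (legitimate since $e$ is prime to $p$) replaces this operator by its $e$-th power, which is unipotent; and since $C':=C\times_K K'$ is again cohomologically tame, that $e$-th power is precisely the monodromy of $C'$ on $H^1$.

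Combining the two steps produces a finite separable extension $K'/K$ for which $C'$ has genus $\neq 1$ or is elliptic and for which the monodromy on $H^1(C'\times_{K'}K^s,\Q_\ell)$ is unipotent; Theorem~\ref{thm-sstable} then yields that every relatively minimal $ncd$-model of $C'$ is semi-stable, as desired. The one genuinely substantive input beyond the machinery already developed is the quasi-unipotence of the monodromy operator, i.e.\ Grothendieck's theorem --- which the argument above reproves in this case from Lemma~\ref{lemm-finquot} and formula~\eqref{eq-charpol}. I expect the remaining difficulty to be only organizational: keeping track of base changes and choosing one finite separable extension that realizes all of the requirements simultaneously.
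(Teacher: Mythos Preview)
Your proof is correct and follows essentially the same approach as the paper: reduce to Theorem~\ref{thm-sstable} by arranging that the monodromy on $H^1$ becomes unipotent over a finite separable extension, establishing quasi-unipotence via Lemma~\ref{lemm-finquot} together with formula~\eqref{eq-charpol} (the paper also cites Grothendieck's monodromy theorem \cite[I.1.3]{sga7a} as an alternative source). You spell out explicitly the passage from quasi-unipotent to unipotent and the handling of the genus-one case (acquiring a rational point over a finite separable extension), both of which the paper's two-line proof leaves to the reader.
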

\begin{proof}
The monodromy action on
$$H^1(C\times_K K^s,\Q_\ell)$$
is quasi-unipotent, by the monodromy theorem \cite[I.1.3]{sga7a}.
 This also follows immediately from \eqref{eq-charpol} and Lemma
  \ref{lemm-finquot}.
\end{proof}

\begin{cor}\label{cor-mindegree}
Assume that $\mathscr{C}$ is a relatively minimal $sncd$-model of
$C$, and that $C$ is cohomologically tame. Suppose either that
$g\neq 1$ or that $C$ is an elliptic curve. The degree $e$ of the
minimal extension of $K$ where $C$ acquires semi-stable reduction
is equal to
$$\mathrm{lcm}\,\{N_i\,|\,i\in I,\,E_i\ \mathrm{is\ principal}\}.$$
\end{cor}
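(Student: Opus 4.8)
The plan is to bracket the degree $e$ between $\mathrm{lcm}\{N_i \mid E_i\text{ principal}\}$ and itself, using the two directions of the semi-stable reduction criterion together with the $d$-tameness analysis.

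First I would handle the lower bound $e \geq \mathrm{lcm}\{N_i \mid E_i\text{ principal}\}$. Let $K'/K$ be an extension of degree $e'$ over which $C$ acquires semi-stable reduction, and suppose some principal component $E_j$ of $\mathscr{C}_s$ has $N_j \nmid e'$; pick a prime power $d = \ell_0^a$ dividing $N_j$ but not $e'$. Over $K'$ the monodromy is unipotent, so by Corollary \ref{cor-primed} applied over $K'$, the base-changed relatively minimal $sncd$-model is $d$-tame for every $d > 1$; in particular no eigenvalue of $\varphi^{e'}$ (the tame generator over $K'$) on $H^1$ has order divisible by $d$. On the other hand, the eigenvalues of $\varphi$ on $H^1(C\times_K K^t,\Q_\ell)$ are computed by $P_C(\vart) = (\vart-1)^2\prod_i(\vart^{N_i'}-1)^{-\chi(E_i^o)}$ from \eqref{eq-charpol}, and since $C$ is cohomologically tame this equals $(\vart-1)^2\prod_i(\vart^{N_i}-1)^{-\chi(E_i^o)}$ by Corollary \ref{cor-tamecrit3}. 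A principal component $E_j$ has $\chi(E_j^o) < 0$ (as $g_j > 0$ or $E_j \setminus E_j^o$ has at least three points), so the cyclotomic polynomial $\Phi_{N_j}(\vart)$ genuinely appears in $P_C(\vart)$ with positive exponent — here one must check that the contributions from other components with multiplicity divisible by $d$ cannot cancel it, which is exactly Lemma \ref{lemm-pos} (the sum $\sum_{i \in I_d}\chi(E_i^o) \leq 0$, with equality only in the $d$-tame case, which fails since $N_j$ with $\chi(E_j^o)<0$ lies in $I_d$). Thus $\varphi$ has an eigenvalue of order divisible by $d$, and since $d \nmid e'$, raising to the $e'$-th power keeps an eigenvalue $\neq 1$, contradicting unipotence over $K'$. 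Hence $N_j \mid e'$ for every principal $E_j$, so $\mathrm{lcm}\{N_i \mid E_i\text{ principal}\}$ divides $e$.

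For the upper bound, set $e_0 = \mathrm{lcm}\{N_i \mid E_i\text{ principal}\}$ and let $K_0 = K(e_0)$. I would show $C \times_K K_0$ has semi-stable reduction by producing a relatively minimal $ncd$-model of $C\times_K K_0$ that is semi-stable, via Theorem \ref{thm-sstable}: it suffices to check the monodromy on $H^1(C\times_K K^s, \Q_\ell)$ restricted to $G(K^s/K_0)$ is unipotent, equivalently that every eigenvalue of $\varphi^{e_0}$ on $H^1(C\times_K K^t,\Q_\ell)$ equals $1$. By \eqref{eq-charpol} and Corollary \ref{cor-tamecrit3}, the eigenvalues of $\varphi$ are roots of unity of order dividing some $N_i$ with $\chi(E_i^o) \neq 0$; I must argue that any such $N_i$ that actually contributes an eigenvalue $\neq 1$ divides $e_0$. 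If $\chi(E_i^o) < 0$ then $E_i$ is principal (a rational curve meeting the rest in $\leq 2$ points has $\chi(E_i^o) = 0$, and an elliptic $E_i = E_i^o$ forces $g = 1$ with $I = \{i\}$, excluded by our hypothesis together with $C(K) \neq \emptyset$), so $N_i \mid e_0$. If $\chi(E_i^o) > 0$, then — as in the proof of Theorem \ref{thm-primed} — relative minimality and Castelnuovo together with Proposition \ref{prop-liucor} force such an $E_i$ to sit at the end of a chain of rational curves whose multiplicities strictly increase, which is absurd unless these multiplicities never appear with positive total exponent; more precisely one shows that for each $d > 1$ the exponent $-\sum_{i \in I_d}\chi(E_i^o)$ of $\Phi_d$ in $P_C(\vart)$ vanishes whenever $\mathscr{C}$ is $d$-tame, and when it is not $d$-tame some principal component has multiplicity divisible by $d$, so $d \mid e_0$. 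Either way every root of $P_C(\vart)$ other than $1$ has order dividing $e_0$, giving unipotence over $K_0$ and hence $e \mid e_0$.

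The main obstacle I anticipate is the bookkeeping in the lower-bound direction: ensuring that the cyclotomic factor $\Phi_{N_j}$ coming from a principal component is not cancelled in $P_C(\vart)$ by contributions of opposite sign from other components whose multiplicity is divisible by the same prime power $d$. This is precisely controlled by Lemma \ref{lemm-pos} — the non-positivity of $\sum_{i\in I_d}\chi(E_i^o)$, with the equality case characterized by $d$-tameness — so the argument reduces to observing that the presence of a principal $E_j$ with $d \mid N_j$ and $\chi(E_j^o) < 0$ is incompatible with $d$-tameness, forcing the total exponent of $\Phi_d$ to be strictly positive. The special cases $g = 1$ versus $C$ an elliptic curve must be tracked throughout exactly as in Corollary \ref{cor-primed} and Theorem \ref{thm-sstable}, using Corollary \ref{cor-ratpoint} to rule out $I = I_d$.
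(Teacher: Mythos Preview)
Your strategy is the paper's: identify $e$ with the $\mathrm{lcm}$ of the orders of the tame monodromy eigenvalues on $H^1$ via Theorem~\ref{thm-sstable}, and then read those orders off from the formula $P_C(\vart)=(\vart-1)^2\prod_i(\vart^{N_i}-1)^{-\chi(E_i^o)}$ supplied by Corollary~\ref{cor-tamecrit3}, using Corollary~\ref{cor-primed} to translate between eigenvalue orders and $d$-tameness. The paper packages this more efficiently: it first gets the \emph{equality} $e=\mathrm{lcm}\{d>1:\mathscr{C}\text{ not }d\text{-tame}\}=\mathrm{lcm}\{N_i:\chi(E_i^o)\neq 0\text{ or }I=I_{N_i}\}$ directly from Corollary~\ref{cor-primed}, then observes from the polynomial formula that $e$ divides $\mathrm{lcm}\{N_i:\chi(E_i^o)<0\}$, which in turn divides the previous expression; this sandwiches everything at once.

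There is, however, a genuine slip in your lower-bound argument. You assert that Lemma~\ref{lemm-pos} gives $\sum_{i\in I_d}\chi(E_i^o)\leq 0$ \emph{with equality only in the $d$-tame case}. Lemma~\ref{lemm-pos} says nothing about the equality case, and the implication is false: one can have a connected piece of $\cup_{i\in I_d}E_i$ that is a tree of rational curves meeting the rest of $\mathscr{C}_s$ in exactly two points, giving total contribution zero even though some individual $\chi(E_i^o)$ is negative. So from $j\in I_d$ with $\chi(E_j^o)<0$ you cannot conclude that $\Phi_d$ itself has positive exponent in $P_C$. The conclusion you actually need---that $\varphi$ has an eigenvalue of order divisible by $d$---follows instantly from Corollary~\ref{cor-primed}: since $j\in I_d$ and $\chi(E_j^o)\neq 0$, the model is not $d$-tame, hence some eigenvalue has order divisible by $d$ (possibly a proper multiple of $d$, which is fine). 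With that one-line replacement your lower bound goes through; there is no need to pass to $K'$ or to pick a prime power. Your upper bound is correct once streamlined: if $\Phi_d$ occurs with positive exponent then $\sum_{i\in I_d}\chi(E_i^o)<0$, so some $i\in I_d$ already has $\chi(E_i^o)<0$ and is therefore principal---the detour through the $\chi(E_i^o)>0$ case and the chain argument from Theorem~\ref{thm-primed} is unnecessary.
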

\begin{proof}
Note that $E_i$ is principal iff $\chi(E_i^o)<0$, or $E_i=E_i^o$
and $E_i$ is an elliptic curve. The latter possibility only occurs
if $C$ is an elliptic curve with good reduction, in which case the
statement is obvious.

It follows from Corollary \ref{cor-primed} that
\begin{eqnarray}e&=&\mathrm{lcm}\,\{d\in \Z_{>1}\,|\,\mathscr{C}\mbox{ is not }d\mbox{-tame}\}
\\ &=& \mathrm{lcm}\,\{N_i\,|\,\chi(E^o_i)\neq 0\mbox{ or
}I=I_{N_i}\}.  \label{eq-mindeg}
\end{eqnarray} On the other hand, by the implication
(1)$\Rightarrow(3)$ in Corollary \ref{cor-tamecrit3}, we know that
$e$ divides
$$\mathrm{lcm}\,\{N_i\,|\,\chi(E_i^o)<0\}.$$
This value divides the right hand side of \eqref{eq-mindeg}. It
follows that
$$e=\mathrm{lcm}\,\{N_i\,|\,\chi(E_i^o)<0\}.$$
\end{proof}
If $g>1$, Corollary \ref{cor-mindegree} was proven by a different
method in \cite[7.5]{halle}.


\begin{remark}
If $C$ has genus at least one, then it is not hard to show that
the minimal $ncd$-model $\mathscr{D}$ of $C$ is semi-stable if and
only if the minimal regular model $\mathscr{E}$ of $C$ is a
semi-stable $ncd$-model. The ``if'' part is obvious. The model
$\mathscr{D}$ is obtained by blowing up $\mathscr{E}$ at points of
$\mathscr{E}_s$ where $\mathscr{E}_s$ does not have normal
crossings. Such a point is never a regular point of
$\mathscr{E}_s$, so that the exceptional divisor of the blow-up
has multiplicity at least two in $\mathscr{D}_s$. This means that
$\mathscr{D}\to \mathscr{E}$ must be an isomorphism if
$\mathscr{D}$ is semi-stable.
\end{remark}

\subsection{Some counterexamples}
To conclude this section, we discuss some examples that show that
certain conditions in the statements of the above results cannot
be omitted.
\begin{enumerate}
\item Theorem \ref{thm-sstable} is false if we take for
$\mathscr{D}$ a relatively minimal $sncd$-model of $C$. If $C$ is
an elliptic curve of type $I_1$, then its minimal $ncd$-model
$\mathscr{C}$ is semi-stable, but the special fiber of its minimal
$sncd$-model contains a component of multiplicty $2$ (the
exceptional curve of the blow-up of $\mathscr{C}$ at the
self-intersection point of $\mathscr{C}_s$).

\item Theorem \ref{thm-sstable} can fail for curves of genus one
without rational point, namely, for non-trivial $E$-torsors over
$K$, with $E$ an elliptic $K$-curve with semi-stable reduction.

 \item Theorem \ref{thm-saito} is false if we don't
assume that $\mathscr{C}$ is relatively minimal. If $N_i$ is
divisible by $p$, then blowing up a point of $E_i^o$ destroys
$p$-tameness of the model.

\item Corollaries \ref{cor-primed} and \ref{cor-mindegree} can
fail for genus one curves without rational point, for instance,
for non-trivial $E$-torsors over $K$, with $E$ an elliptic
$K$-curve with good reduction.

\item Corollary \ref{cor-mindegree} is false if we do not assume
that $C$ is cohomologically tame. For instance, if $k$ has
characteristic $2$ and $R$ is the ring of Witt vectors over $k$,
then the elliptic $K$-curve with Weierstrass equation $y^2=x^3+2$
has reduction type $II$, while it acquires good reduction over
$K(\sqrt{2})$.
\end{enumerate}

\section{The trace formula}\label{sec-trace}

\subsection{The rational volume and the trace formula}
Let $X$ be a smooth and proper $K$-variety. Recall that a weak
N\'eron model for $X$ is a separated smooth $R$-scheme of finite
type $\X$, endowed with an isomorphism $\X\times_R K\cong X$, such
that the natural map
$$\X(R)\rightarrow \X(K)=X(K)$$ is a bijection. Such a
weak N\'eron model always exists, and it can be constructed by
taking a N\'eron smoothening of a proper $R$-model of $X$
\cite[3.1.3]{neron}.

It follows from
  \cite[4.5.3]{motrigid} and
\cite[5.2]{Ni-tracevar} (see also \cite[5.4]{nise-err} for an
erratum) that the value
$$s(X)=\chi(\X_s)\quad \in \Z$$
only depends on $X$, and not on the choice of weak N\'eron model.

\begin{definition}
We call $s(X)$ the rational volume of $X$.
\end{definition}
\begin{remark}
It is quite non-trivial that $s(X)$ is independent of the choice
of weak N\'eron model. If $k$ has positive characteristic, we do
not know any proof of this result that does not use the change of
variables formula for motivic integrals. If $k$ has characteristic
zero, it can be deduced from the trace formula (Corollary
\ref{cor-notwild}).
\end{remark}

The value $s(X)$ is a measure for the set of rational points on
$X$. In particular, $s(X)=0$ if $X(K)=\emptyset$, since in this
case, $X$ is a weak N\'eron model of itself. In
\cite{Ni-tracevar}, we've shown that under a certain tameness
condition on $X$, the value $s(X)$ admits a cohomological
interpretation in terms of a trace formula. To study this formula,
we introduce the following definition.
\begin{definition}
We define the error term $\varepsilon(X)$ by
$$\varepsilon(X)=\sum_{m\geq 0}(-1)^m \mathrm{Trace}(\varphi\,|\,H^m(X\times_K K^t,\Q_\ell))-s(X).$$
We say that the trace formula holds for $X$ if $\varepsilon(X)=0$,
i.e., if
$$s(X)=\sum_{m\geq 0}(-1)^m \mathrm{Trace}(\varphi\,|\,H^m(X\times_K
K^t,\Q_\ell)).$$
\end{definition}
In \cite[\S1]{Ni-abelian}, we raised the following question.
\begin{question}\label{ques-traceform}
Let $X$ be a smooth, proper, geometrically connected $K$-variety.
Assume that $X$ is cohomologically tame and that $X(K^t)$ is
non-empty. Is it true that the trace formula holds for $X$?
\end{question}
We've proven that this question has a positive answer if $k$ has
characteristic zero \cite[6.5]{Ni-tracevar}, if $X$ is a curve
\cite[\S7]{Ni-tracevar} and also if $X$ is an abelian variety
\cite[2.9]{Ni-abelian}. The condition that $X(K^t)$ is non-empty
can not be omitted, by \cite[7.7]{Ni-tracevar}. If $X$ is not
cohomologically tame, it would be quite interesting to relate
$\varepsilon(X)$ to other measures of wild ramification. However,
by the example in \cite[7.7]{Ni-tracevar}, the value
$\varepsilon(X)$ can not always be computed from the Chow motive
of $X$, if we don't impose the condition $X(K^t)\neq \emptyset$.

\subsection{Computation of the error term}
In \cite[7.3]{Ni-tracevar}, we gave an explicit formula for the
error term $\varepsilon(X)$ in terms of an $sncd$-model for $X$,
if $X$ is a curve. Thanks to Theorem \ref{thm-ac}, we can
generalize this result to arbitrary dimension.
\begin{theorem}\label{thm-error}
Let $X$ be a smooth and proper $K$-variety, and assume that $X$
admits an $sncd$-model $\mathscr{X}$. We denote by $\{E_i\,|\,i\in
I\}$ the set of irreducible components of $\mathscr{X}_s$, and we
write
$$\mathscr{X}_s=\sum_{i\in I}N_i E_i.$$
 We define the subset $I^w$ of $I$ by $$I^w=\{i\in
I\,|\,N_i=p^a\mbox{ for some }a\in \Z_{>0}\}.$$ Then the error
term $\varepsilon(X)$ is given by
$$\varepsilon(X)=\sum_{i\in I^w}\chi(E_i^o).$$
\end{theorem}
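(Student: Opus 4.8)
The plan is to combine the arithmetic A'Campo formula (Theorem \ref{thm-ac}) with the known expression for the rational volume $s(X)$ in terms of a weak N\'eron model, realizing the smooth locus of $\mathscr{X}$ as such a model. First I would recall from \cite[4.5.3]{motrigid} (or \cite[5.2]{Ni-tracevar}) that if $U$ denotes the largest open subscheme of $\mathscr{X}$ that is smooth over $R$, then $U$ is a weak N\'eron model of $X$, and the smoothness locus of $\mathscr{X}$ over $R$ meets the special fiber exactly in the open strata $E_i^o$ for which $N_i=1$; equivalently, $U_s = \coprod_{N_i=1} E_i^o$. Hence $s(X)=\chi(U_s)=\sum_{i\in I,\,N_i=1}\chi(E_i^o)$.

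Next I would apply formula \eqref{eq-trace2} of Theorem \ref{thm-ac} with $d=1$, which gives
$$\sum_{m\geq 0}(-1)^m \mathrm{Trace}(\varphi\,|\,H^m(X\times_K K^t,\Q_\ell))=\sum_{N'_i\mid 1}N'_i\,\chi(E_i^o)=\sum_{i\in I,\,N'_i=1}\chi(E_i^o).$$
Now $N'_i=1$ means exactly that $N_i$ is a power of $p$ (including $p^0=1$), i.e.\ $i\in I^w\cup\{i\in I\mid N_i=1\}$, and these two sets are disjoint (for $p=0$ the set $I^w$ is empty and $N'_i=1$ forces $N_i=1$, which is consistent). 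Subtracting, the contributions from the indices with $N_i=1$ cancel, and what remains is precisely
$$\varepsilon(X)=\sum_{m\geq 0}(-1)^m \mathrm{Trace}(\varphi\,|\,H^m(X\times_K K^t,\Q_\ell))-s(X)=\sum_{i\in I^w}\chi(E_i^o),$$
as claimed.

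The routine steps are the two applications of Theorem \ref{thm-ac} and the bookkeeping with the condition $N'_i=1$. The one point that deserves care — and which I expect to be the main obstacle — is the identification $s(X)=\sum_{N_i=1}\chi(E_i^o)$: one must check that the $R$-smooth locus of an $sncd$-model $\mathscr{X}$ is indeed a weak N\'eron model (it is smooth and separated of finite type, and $\mathscr{X}(R)=U(R)$ by the valuative criterion of properness together with the fact that a point of $\mathscr{X}(R)$ must specialize into the smooth locus), and that its special fiber is the disjoint union of the strata $E_i^o$ with $N_i=1$. This last assertion follows from \eqref{eq-sncd}: at a point $x\in E_J^o$ one has $\pi=u\prod_{j\in J}(x_j)^{N_j}$ with $(x_j)$ part of a regular system of parameters, and $\mathscr{X}\to \Spec R$ is smooth at $x$ precisely when this is a single factor with exponent one, i.e.\ $|J|=1$ and $N_j=1$. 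Then the well-definedness of $s(X)$ (independence of the weak N\'eron model) is exactly the cited result \cite[4.5.3]{motrigid}, \cite[5.2]{Ni-tracevar}, so no further argument is needed.
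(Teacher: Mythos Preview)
Your proposal is correct and follows essentially the same line as the paper's own proof: identify the smooth locus of $\mathscr{X}$ as a weak N\'eron model so that $s(X)=\sum_{N_i=1}\chi(E_i^o)$, apply the trace formula \eqref{eq-trace2} with $d=1$ to get $\sum_{N'_i=1}\chi(E_i^o)$, and subtract. One small remark: the references \cite[4.5.3]{motrigid} and \cite[5.2]{Ni-tracevar} concern the independence of $s(X)$ on the choice of weak N\'eron model, not the fact that $Sm(\mathscr{X})$ is a weak N\'eron model; for the latter the paper invokes the remark following \cite[3.1.2]{neron}, though the sketch you give (valuative criterion plus the observation that an $R$-point of a regular proper model must specialize into the smooth locus) is the right idea.
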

Note that the set $I^w$ is empty if $p=0$.
\begin{proof}
Since $\mathscr{X}$ is a regular proper $R$-model of $X$, its
$R$-smooth locus $Sm(\mathscr{X})$ is a weak N\'eron model for $X$
(see the remark following \cite[3.1.2]{neron}). Therefore,
$$s(X)=\sum_{N_i=1}\chi(E_i^o).$$
On the other hand, by Theorem \ref{thm-ac}, we have
$$\sum_{m\geq
0}(-1)^m \mathrm{Trace}(\varphi\,|\,H^m(X\times_K
K^t,\Q_\ell))=\sum_{N'_i=1}\chi(E_i^o).$$ It follows that
$$\varepsilon(X)=\sum_{i\in I^w}\chi(E_i^o).$$
\end{proof}
\begin{cor}
The value
$$\sum_{i\in I^w}\chi(E_i^o)$$
only depends on $X$, and not on the $sncd$-model $\mathscr{X}$.
\end{cor}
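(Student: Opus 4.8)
The plan is to deduce this directly from Theorem \ref{thm-error} together with the fact, recalled earlier in this section, that the rational volume $s(X)$ is intrinsic to $X$. First I would apply Theorem \ref{thm-error}, which gives the identity
$$\sum_{i\in I^w}\chi(E_i^o)=\varepsilon(X).$$
It therefore suffices to argue that $\varepsilon(X)$ depends only on $X$, and not on the chosen $sncd$-model $\mathscr{X}$.

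For this I would unwind the definition $\varepsilon(X)=\sum_{m\geq 0}(-1)^m \mathrm{Trace}(\varphi\,|\,H^m(X\times_K K^t,\Q_\ell))-s(X)$. The first summand is built only from the tame $\ell$-adic cohomology $H(X\times_K K^t,\Q_\ell)$ equipped with the action of the fixed topological generator $\varphi$ of $G(K^t/K)$, so it is visibly a function of $X$ alone. For the second summand, recall from the discussion preceding the definition of the rational volume that $s(X)=\chi(\mathfrak{X}_s)$ for any weak N\'eron model $\mathfrak{X}$ of $X$, and that this integer is independent of the choice of weak N\'eron model by \cite[4.5.3]{motrigid} and \cite[5.2]{Ni-tracevar} (with the erratum \cite[5.4]{nise-err}); hence $s(X)$ also depends only on $X$. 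Combining the two, $\varepsilon(X)$ depends only on $X$, and therefore so does $\sum_{i\in I^w}\chi(E_i^o)$.

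Since all of the substantive content is already contained in Theorem \ref{thm-error} and in the earlier well-definedness of $s(X)$, there is no genuine obstacle to overcome here: the corollary is a purely formal consequence. The point worth stressing is simply the change of perspective it provides — the combinatorial quantity $\sum_{i\in I^w}\chi(E_i^o)$, computed on any $sncd$-model, is a concrete geometric formula for the intrinsic invariant $\varepsilon(X)$.
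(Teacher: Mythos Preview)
Your argument is correct and is exactly the (implicit) approach the paper intends: the corollary is immediate from Theorem~\ref{thm-error}, since $\varepsilon(X)$ is by definition intrinsic to $X$ (both the tame monodromy trace and $s(X)$ depend only on $X$). Nothing more is needed.
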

\begin{cor}\label{cor-notwild}
If $I^w=\emptyset$, then the trace formula holds for $X$. In
particular, if $p=0$, then the trace formula holds for every
smooth and proper $K$-variety.
\end{cor}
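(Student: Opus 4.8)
The plan is to deduce both assertions directly from Theorem \ref{thm-error}, which identifies the error term $\varepsilon(X)$ with $\sum_{i\in I^w}\chi(E_i^o)$, together with the definition that the trace formula holds for $X$ precisely when $\varepsilon(X)=0$.

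For the first assertion I would simply observe that if $I^w=\emptyset$ then the sum $\sum_{i\in I^w}\chi(E_i^o)$ is empty, hence zero; so Theorem \ref{thm-error} gives $\varepsilon(X)=0$, which is exactly the statement that the trace formula holds for $X$.

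For the second assertion, assuming $p=0$, I would argue in two steps. First, I would note that $I^w$ is automatically empty in this case, for any $sncd$-model of any smooth and proper $K$-variety: each multiplicity $N_i$ is a strictly positive integer, whereas $p^a=0$ for every $a\in\Z_{>0}$, so the defining condition $N_i=p^a$ can never be met, and thus $I^w=\{i\in I\,|\,N_i=p^a \text{ for some }a\in\Z_{>0}\}=\emptyset$. Second, I would invoke the existence of an $sncd$-model: since $k$ has characteristic zero, so does $K$ (indeed $R$, being a discrete valuation ring with residue field of characteristic zero, then contains $\mathbb{Q}$), and such an $R$ is excellent; starting from any flat proper $R$-model of $X$ — obtained, for instance, as the schematic closure of $X$ inside $\mathbb{P}^n_R$ after choosing a closed immersion $X\hookrightarrow\mathbb{P}^n_K$ — and applying embedded resolution of singularities over $R$ yields a regular proper $R$-model whose special fiber is a strict normal crossings divisor, i.e. an $sncd$-model of $X$. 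Applying the first assertion to this model finishes the proof. (Alternatively, one may simply cite \cite[6.5]{Ni-tracevar}, where the case $p=0$ was established by a different argument.)

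There is essentially no obstacle here; the only point that deserves care is the existence of an $sncd$-model when $p=0$, which rests on Hironaka's resolution of singularities together with the excellence of $R$ — which is exactly why the ``in particular'' clause is restricted to residue characteristic zero (or, more generally, could be phrased for any $X$ admitting an $sncd$-model, as flagged in the introduction).
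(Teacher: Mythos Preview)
Your proof is correct and follows the same approach as the paper, which leaves the corollary without explicit proof as an immediate consequence of Theorem \ref{thm-error} together with the remark (stated just after Theorem \ref{thm-error}) that $I^w=\emptyset$ when $p=0$. Your additional justification via resolution of singularities for the existence of an $sncd$-model in residue characteristic zero is a reasonable detail that the paper leaves implicit.
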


By Theorem \ref{thm-error}, Question \ref{ques-traceform} implies
the following one.

\begin{question}\label{ques-saito} Let $X$ be a
smooth, proper, geometrically connected $K$-variety. Assume that
$X$ is cohomologically tame, that $X(K^t)$ is non-empty, and that
$X$ admits an $sncd$-model $\mathscr{X}$, with
$\mathscr{X}_s=\sum_{i\in I}N_i E_i$. Is it true that
$$\sum_{i\in I^w}\chi(E_i^o)=0?$$
\end{question}

A positive answer to this question would form a partial
generalization of Saito's criterion for cohomological tameness
(Theorem \ref{thm-saito}) to arbitrary dimension.

\end{document}